\newcommand{\ie}{\emph{i.e.}}
\newcommand{\eg}{\emph{e.g.}}
\newcommand{\cf}{\emph{cf}}
\newcommand{\etc}{\emph{etc}}
\def\H{\mathcal{H}}
\def\eps{\varepsilon}
\def\Heps{H_{\eps}}
\def\R{\mathbb{R}}
\def\N{\mathbb{N}}
\def\Z{\mathbb{Z}}
\def\Ckappa{\|\kappa\|_{\infty}}
\def\Dom{\mathrm{Dom}\,}
\def\Qeps{Q_{\eps}}
\def\Re{\mathrm{Re}}
\def\intom{\int_{\Omega_0}}
\def\B{\mathcal{B}}
\def\heps{h_{\eps}}
\def\der{(\partial_s+\dot{\theta}\partial_{\alpha})}
\newcommand{\ader}{\partial_\alpha}
\def\keps{k^{\eps}}
\def\Ueps{U_{\eps}}
\def\Heff{H_{\mathrm{eff}}}
\def\qeff{q_{\mathrm{eff}}}
\def\Qeff{Q_{\mathrm{eff}}}
\newtheorem{defi}{Definition}[section]
\newtheorem{theorem}[defi]{Theorem}
\newtheorem{lemma}[defi]{Lemma}
\newtheorem{remark}[defi]{Remark}
\newtheorem{proposition}[defi]{Proposition}
\newtheorem{example}[defi]{Example}
\newtheorem{assumption}{Assumption}
\numberwithin{equation}{section}
\begin{document}
\title{\textbf{\LARGE
The effective Hamiltonian in curved quantum waveguides
under mild regularity assumptions
}}
\author{
David Krej\v{c}i\v{r}{\'\i}k\,$^{1,2}$
\ and \
Helena \v{S}ediv\'{a}kov\'{a}$^{1,3}$
}
\date{\small
%
%\begin{center}
\begin{quote}
\begin{enumerate}
\item[$1$]
\emph{Department of Theoretical Physics,
Nuclear Physics Institute ASCR,
25068 \v Re\v z, Czech Republic};
krejcirik@ujf.cas.cz, sedivakova.h@gmail.com
%\medskip \\
\item[$2$]
\emph{IKERBASQUE, Basque Foundation for Science,
48011 Bilbao, Kingdom of Spain}
\item[$3$]
\emph{Faculty of Nuclear Sciences and Physical Engineering,
Czech Technical University in Prague,
B\v{r}ehov\'{a} 7, 115 19 Prague 1, Czech Republic}
\end{enumerate}
\end{quote}
%\end{center}
%
\ \smallskip \\
6 March 2012
}
\maketitle
\begin{abstract}
\noindent
The Dirichlet Laplacian in a curved three-dimensional tube built
along a spatial (bounded or unbounded) curve is investigated in the limit
when the uniform cross-section of the tube diminishes.
Both deformations due to bending and twisting of the tube are considered.
We show that the Laplacian converges in a norm-resolvent sense
to the well known one-dimensional Schr\"odinger operator
whose potential is expressed in terms of the curvature of the reference curve,
the twisting angle and a constant measuring the asymmetry of the cross-section.
Contrary to previous results, we allow the reference curves
to have non-continuous and possibly vanishing curvature.
For such curves, the distinguished Frenet frame
standardly used to define the tube need not exist
and, moreover, the known approaches to prove the result
for unbounded tubes do not work.
Our main ideas how to establish the norm-resolvent convergence
under the minimal regularity assumptions
are to use an alternative frame defined
by a parallel transport along the curve and a refined
smoothing of the curvature via the Steklov approximation.
\end{abstract}
\newpage
\tableofcontents
\newpage
%
%---------------------%
\section{Introduction}
%---------------------%
%
This paper is concerned with the singular operator limit
for the Dirichlet Laplacian
in a three-dimensional non-self-intersecting
curved tube (\cf~Figure~\ref{vlnovod})
when its two-dimensional cross-section shrinks to a point.
The tube~$\Omega_\varepsilon$ is constructed by translating and rotating
the cross-section along a spatial curve~$\Gamma$
and the limit is realized by homothetically
scaling a fixed cross-section~$\omega$
by a small positive number~$\varepsilon$.
Without loss of generality, we assume that the curve
is given by its arc-length parameterization $\Gamma:I\to\R^3$,
where the open interval $I \subset \R$ is allowed
to be arbitrary: finite, infinite or semi-infinite.
Geometrically, $\Omega_\varepsilon$ collapses to~$\Gamma$ as $\varepsilon\to 0$.
We are interested in how and when the three-dimensional
Dirichlet Laplacian $-\Delta_D^{\Omega_\varepsilon}$
can be approximated by a one-dimensional operator~$H_\mathrm{eff}$
on the curve.

\begin{figure}[h]
\begin{center}
\includegraphics[width=10 cm]{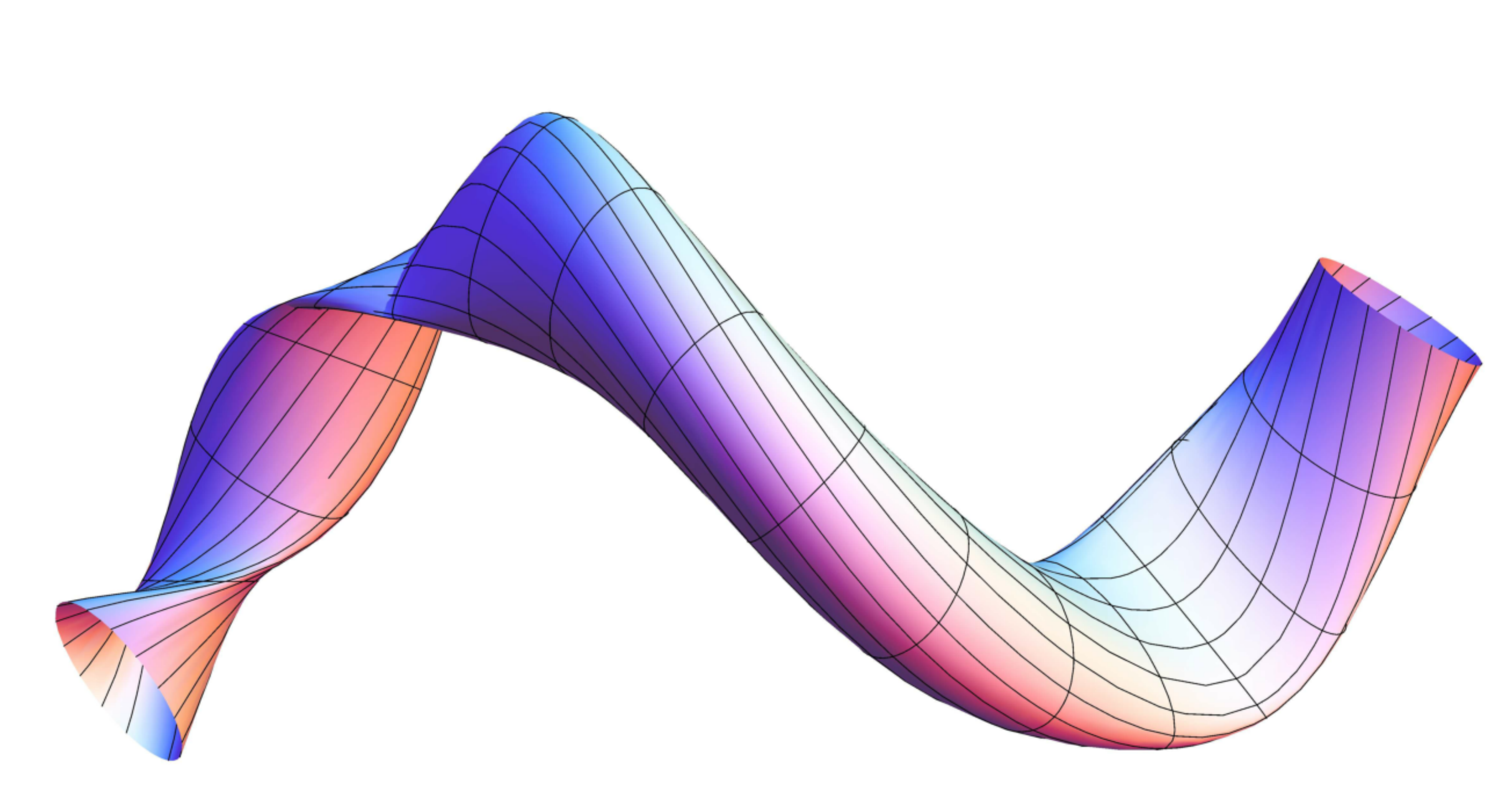}
\caption{The geometry of a quantum waveguide.
Twisting and bending
are demonstrated on the left and right part of the figure,
respectively.}\label{vlnovod}
\end{center}
\end{figure}

We start with some more or less obvious observations.
\begin{itemize}
\item[$\circ$]
Since we deal with unbounded operators, the convergence
of $-\Delta_D^{\Omega_\varepsilon}$ to~$H_\mathrm{eff}$
is understood through a convergence of their resolvents.
\item[$\circ$]
The Dirichlet boundary conditions imply that the spectrum
of $-\Delta_D^{\Omega_\varepsilon}$ explodes as $\varepsilon \to 0$.
It is just because the first eigenvalue of
the Dirichlet Laplacian in the scaled cross-section
$
  \varepsilon\omega := \{\eps t \, | \, t\in\omega\}
$
equals $\varepsilon^{-2} E_1$, where~$E_1$ is the first eigenvalue
of the Dirichlet Laplacian in the fixed cross-section~$\omega$.
Hence, a normalization $-\Delta_D^{\Omega_\varepsilon}-\varepsilon^{-2} E_1$
is in order to get a non-trivial limit.
\item[$\circ$]
Finally, since the configuration spaces $\Omega_\varepsilon$
and $\Gamma$ have different dimensions, a suitable identification
of respective Hilbert spaces of $-\Delta_D^{\Omega_\varepsilon}$
and $H_\mathrm{eff}$ is required.
This is achieved by using a unitary transform
that identifies $L^2(\Omega_\varepsilon)$ with $L^2(I\times\omega)$
and by considering~$H_\mathrm{eff}$
as acting on the subspace of $L^2(I\times\omega)$
spanned by functions of the form $\varphi\otimes\mathcal{J}_1$
on $I \times \omega$, where~$\mathcal{J}_1$ denotes
the positive normalized eigenfunction of~$-\Delta_D^{\omega}$
corresponding to~$E_1$.
\end{itemize}

Taking these remarks into account, we can
write the convergence result as follows:
\begin{equation}\label{limit}
  -\Delta_D^{\Omega_\varepsilon} -\varepsilon^{-2} E_1
  \ \xrightarrow[]{\varepsilon \to 0} \
  H_\mathrm{eff}^F := -\Delta_D^I
  - \frac{\kappa^2}{4} + C_\omega \, (\dot\theta_F-\tau)^2
  \,.
\end{equation}
Here $\kappa:=|\ddot{\Gamma}|$ and
$
  \tau :=
  %\kappa^{-2}\;\!\dot{\Gamma}\cdot\ddot{\Gamma}\times\dddot{\Gamma}
  \kappa^{-2}\det(\dot{\Gamma},\ddot{\Gamma},\dddot{\Gamma})
$
denote respectively the curvature and torsion of~$\Gamma$,
$\theta_F$~is an angle function defining the rotation
of~$\varepsilon\omega$ with respect to the Frenet frame of~$\Gamma$
and $C_\omega := \|\ader\mathcal{J}_1\|_{L^2(\omega)}$,
with~$\ader$ denoting the angular derivative in~$\R^2$.

The convergence~\eqref{limit} can be employed
as a way to approximate the three-dimen\-sion\-al
dynamics of an electron constrained to a curved quantum waveguide
by the effective one-dimensional Hamiltonian~$H_\mathrm{eff}^F$
on the reference curve.
The Dirichlet Laplacian on the interval~$I$
represents the kinetic energy of the free motion on the reference curve
(indeed, $-\Delta_D^I$ is unitarily equivalent
to the Laplace-Beltrami operator on~$\Gamma$).
The additional potential of~$H_\mathrm{eff}^F$ clearly consists of
two competing terms:
the negative one induced by curvature
and the positive one due to torsion.
They respectively represent the opposite effects
of bending and twisting in quantum waveguides,
\emph{cf}~\cite{K6-with-erratum}.

\subsection{Known results and why we write this paper}\label{Sec.why}
The result~\eqref{limit} is well known,
it has been established in various settings
and with different methods during the last two decades.
As the first rigorous result, let us mention
the classical paper~\cite{DE} of Duclos and Exner,
where the norm-resolvent convergence of~\eqref{limit}
is proved under quite restrictive hypotheses $\Gamma \in C^4$
and~$\omega$ being a disc (so that $C_\omega=0$).
More precise results about the limit
(for instance, uniform convergence of eigenfunctions)
in arbitrary dimensions are established
by Freitas and Krej\v{c}i\v{r}\'ik in~\cite{FK4},
but the cross-section is still assumed
to be rotated along~$\Gamma$ in such a way that
$\dot\theta_F=\tau$, so there is no effect of twisting.

The presence of the additional potential term due to twisting
in~$H_\mathrm{eff}^F$ was observed for the first time
by Bouchitt\'e, Mascarenhas and Trabucho~\cite{BMT}.
Contrary to the previous works where operator techniques are used,
the authors of~\cite{BMT} use an alternative method of
Gamma-convergence, which provides just a strong-resolvent
convergence of~\eqref{limit} but, on the other hand,
enables them to weaken the regularity hypothesis to $\Gamma \in C^3$.
De Oliveira \cite{deOliveira_2006} extended the results of~\cite{BMT}
to unbounded tubes and established a norm-resolvent convergence
in the bounded case
(see also \cite{deOliveira-Verri1,deOliveira-Verri2}).

Finally, let us mention the series of recent papers
\cite{Lampart-Teufel-Wachsmuth,Wachsmuth-Teufel,Wachsmuth-Teufel-short},
where the singular limit of the type~\eqref{limit}
is attacked by the methods of adiabatic perturbation theory.
In fact, the general setting of shrinking tubular neighbourhoods of
(infinitely smooth) submanifolds of Riemannian manifolds
is considered in these works
and the results can be interpreted as a rigorous quantization procedure
on the submanifolds.

After having provided an extensive literature on the limit~\eqref{limit},
a question arises why we still consider the problem in the present paper.
In fact, the issue we would like to address here is about
the optimal regularity conditions under which
the effective approximation~\eqref{limit} holds.
We are motivated by the fact that the known existing
results mentioned above do not cover physically interesting
curves with merely continuous or even discontinuous curvature
(\cf~Figure~\ref{nefrenet}).

\begin{figure}[h]
\begin{center}
\includegraphics[width=15 cm]{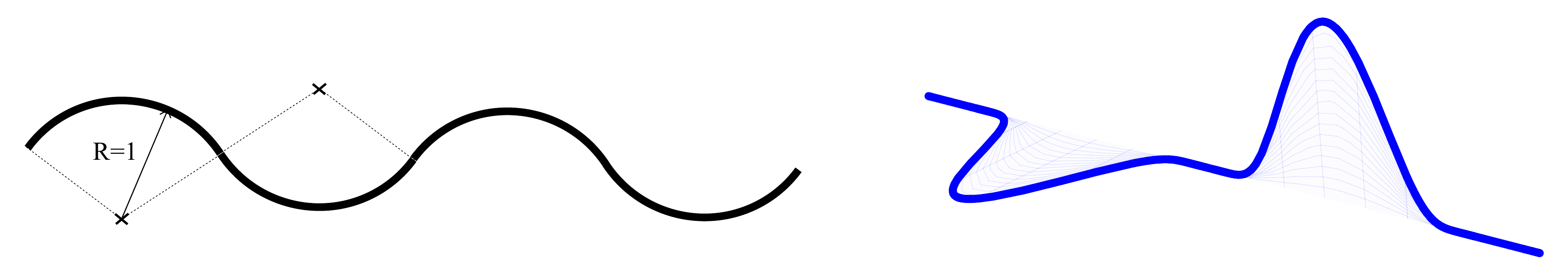}
\caption{Examples of curves with discontinuous curvature (on the left)
and with infinitely smooth curvature but still
without the Frenet frame (on the right).}\label{nefrenet}
\end{center}
\end{figure}

Furthermore, it is a standard hypothesis in the literature
about quantum waveguides that the first three derivatives
of the reference curve~$\Gamma$
exist and are linearly independent, so that the torsion
and the distinguished Frenet frame exist.
However, this is meaningful only for curves which are three
times differentiable and have nowhere vanishing
(differentiable) curvature~$\kappa$.
We find the latter as a very restrictive requirement,
even for infinitely smooth curves (\cf~Figure~\ref{nefrenet}).
Indeed, the torsion~$\tau$ is not well defined
for such curves, so that the limit~\eqref{limit}
with~the effective Hamiltonian~$H_\mathrm{eff}^F$ is meaningless.
Partial attempts to overcome this technical condition
can be found in \cite{Borisov-Cardone,ChDFK}.
In this paper we provide a complete answer by considering
waveguides built along any twice differentiable curves,
with the boundedness of~$\kappa$ being the only hypothesis.
Our assumptions are very natural and in fact
intrinsically necessary for the construction of the waveguide
as a regular Riemannian manifold.

Finally, the Gamma-convergence method of \cite{BMT,deOliveira_2006},
which seems to work under less restrictive regularity once the technical
difficulty of the non-existence of the Frenet frame is overcome,
implies only (unless the waveguide is bounded \cite{deOliveira_2006})
a strong-resolvent convergence for~\eqref{limit}.
Furthermore, it does not provide any information
about the convergence rate.
In addition to the regularity issues mentioned above,
our goal is therefore to use operator methods instead of the Gamma-convergence,
establish~\eqref{limit} in the norm-resolvent sense
and get a control on the convergence rate.

\subsection{The content of the paper}
The organization of this paper is as follows.

In the following Section~\ref{Sec.strategy.intro}
we explain our strategy to handle the singular limit
under mild regularity hypotheses
and state the main result of this paper
(Theorem~\ref{Thm.main.intro}).

We postpone a precise definition of a simultaneously
twisted and bent waveguide
and of the associated Dirichlet Laplacian till Section~\ref{Sec.pre}.
For reasons mentioned above, we construct the waveguide
by using an alternative frame defined by parallel transport
along the curve instead of the usual Frenet frame.
Since it seems that this frame is not as well known
as the Frenet one, and since we want to include
more general curves than those usually considered in differential geometry,
we decided to include Section~\ref{secframe},
where we thoroughly describe the construction of the frame
under our mild regularity conditions.

The main idea of the present paper consists in smoothing
non-differentiable quantities by means of the
so-called Steklov approximation (see~\eqref{aproximace} below).
This procedure is in detail explained in Section~\ref{Sec.Steklov}.

The proof of Theorem~\ref{Thm.main.intro} is given
in Section~\ref{Sec.proof}.
Since it is rather long and technically involved,
we divide the proof into several auxiliary lemmata
and the section into corresponding subsections.

The paper is concluded in Section~\ref{seconcl}
by discussing optimality of our results.

%-----------------------------------------%
\section{Our strategy and the main result}\label{Sec.strategy.intro}
%-----------------------------------------%
%
Our strategy how to achieve the objectives sketched in Introduction
is based on the following ideas:
\begin{enumerate}
\item[(I)]
Use the frame defined by the parallel transport
instead of the Frenet frame. This alternative frame is known
to exist for any curve of class~$C^2$, \emph{cf}~\cite{Bishop_1975}.
We generalize the construction to the curves
that merely belong to the Sobolev space $W_\mathrm{loc}^{2,\infty}$.
\item[(II)]
Work exclusively with the quadratic forms associated with the operators.
More specifically, we adapt the elegant method of Friedlander and Solomyak
\cite{Friedlander-Solomyak_2007,Friedlander-Solomyak_2008a}
to deduce the norm-resolvent convergence
from a convergence of quadratic forms.
\end{enumerate}

Even if one implements these ideas, the standard operator
approach to the thin-cross-section limit in quantum waveguides
(see, \eg, \cite{DE})
still requires certain differentiability of curvature~$\kappa$
(which is just bounded under our hypotheses).
To see it, we sketch the standard strategy now.

First, one uses curvilinear coordinates,
which induce the unitary transform
\begin{equation}\label{U1}
  U_1 : L^2(\Omega_\varepsilon)
  \to L^2\big(
  I\times\omega,\varepsilon^2\;\!h(s,t)\;\!ds\;\!dt
  \big),
\end{equation}
where the Jacobian~$\varepsilon^2 h$ is standardly expressed
in terms of~$\kappa$ and~$\theta_F$.
In our more general setting enabled by the strategy~(I) above,
we have
\begin{equation}\label{Jacobian}
  h(\cdot,t)
  := 1 - \varepsilon \, t_1 \, (k_1\cos\theta - k_2 \sin\theta)
  - \varepsilon \, t_2\, (k_1 \sin\theta + k_2 \cos\theta)
  \,,
\end{equation}
where~$k_1,k_2$ are curvature functions computed with respect to
our relatively parallel frame and~$\theta$ is an angle function
defining the rotation of the cross-section~$\varepsilon\omega$
with respect to this frame.
We have $\kappa^2=k_1^2+k_2^2$ and, if the Frenet frame exists,
our frame is rotated with respect to the Frenet frame
by the angle given by a primitive of torsion~$\tau$
(\cf~\eqref{anglefrenetRPAF} below).
Consequently, in our more general setting,
the difference $\dot\theta_F-\tau$ in~\eqref{limit}
is to be replaced by~$\dot\theta$
and the effective Hamiltonian reads
\begin{equation}\label{defheff}
  \Heff := - \Delta_D^{I}
  - \frac{\kappa^2}{4} + C_\omega \;\! \dot{\theta}^2
  .
\end{equation}
We emphasize that this operator coincides with~$H_\mathrm{eff}^F$
introduced in~\eqref{limit} if~$\Gamma$ possesses the Frenet frame
but, contrary to~$H_\mathrm{eff}^F$,
it is well defined even if the torsion~$\tau$ does not exist.

Second, to recover the curvature term in the effective
potential of~\eqref{limit},
one also performs the unitary transform
\begin{equation}\label{U2}
  U_2 : L^2\big(
  I\times\omega,\varepsilon^2\;\!h(s,t)\;\!ds\;\!dt
  \big)
  \to L^2(I\times\omega) : \left\{ \psi \mapsto \varepsilon \;\! \sqrt{h} \;\! \psi \right\}
  \,.
\end{equation}
The composition $U:=U_2 U_1$ clearly identifies
the geometrically complicated Hilbert space
$L^2(\Omega_\varepsilon)$ with the simple $L^2(I\times\omega)$.
The standard procedure consists in transforming
$-\Delta_D^{\Omega_\eps}$ with help of~$U$ to
a unitarily equivalent operator on $L^2(I\times\omega)$
and prove the norm-resolvent convergence for the transformed operator.
However, $U$~does not leave the form domain
$W_0^{1,2}(I\times\omega)$ invariant if~$k_1,k_2$
are not differentiable in a suitable sense.

The last difficulty is overcome in this paper
by the following trick:
\begin{itemize}
\item[(III)]
Replace the curvature functions in~\eqref{Jacobian} by their
$\varepsilon$-dependent mollifications ($\mu\in\{1,2\}$)
\begin{equation}\label{aproximace}%\label{Steklov.approx}
  k_\mu^\varepsilon(s) :=
  \frac{1}{\delta(\varepsilon)}
  \int_{s-\frac{\delta(\varepsilon)}{2}}^{s+\frac{\delta(\varepsilon)}{2}}
  k_\mu(\xi) \, d\xi
  \,,
\end{equation}
where~$\delta$ is a continuous function such that
both $\delta(\eps)$ and $\varepsilon\;\!\delta(\varepsilon)^{-1}$
tend to zero as $\varepsilon \to 0$.
\end{itemize}
Then everything works very well
(although the overall procedure is technically much more demanding)
because the longitudinal derivative
of the mollified~$h$ involves the terms
$\varepsilon \dot{k}_\mu^\varepsilon$ which vanish
as $\varepsilon \to 0$,
even if $\dot{k}_\mu^\varepsilon$ diverge in this limit.
In more intuitive words,~\eqref{aproximace}~can be
understood in a sense as that
the curve is smoothed on a scale small compared
to the curvature of the curve,
but large compared to the diameter of the cross-section
of the waveguide.

The mollification~\eqref{aproximace} is sometimes referred
to as the Steklov approximation in Russian literature
(see, \eg, \cite{Akhiezer}).
At a step of our proof, we shall also need to mollify
the derivative of the angle function~$\theta$.

Before stating the main result of the paper,
let us now carefully write down all the hypotheses
we need to derive it, although some of the quantities
appearing in the assumptions will be properly defined only later.
\begin{assumption}\label{assumgamma}
Let $\Gamma:I \rightarrow \R^3$ be a unit-speed spatial curve,
where the interval $I\subset\R$ is finite, semi-infinite or infinite,
satisfying
\begin{itemize}
\item[\emph{(i)}]
$\Gamma\in W^{2,\infty}_{\mathrm{loc}}(I;\R^3)$
\ and \
$\kappa:=|\ddot{\Gamma}|\in L^\infty(I)$.
\end{itemize}
Further, let~$\omega$ be a bounded open connected subset
of~$\R^2$ and let $\theta:I\to\R$ be the angle describing
the rotation of the waveguide cross-section $\eps\omega$
with respect to the relatively parallel adapted frame
constructed along~$\Gamma$ satisfying
\begin{itemize}
\item[\emph{(ii)}]
$\theta\in W^{1,\infty}_{\mathrm{loc}}(I)$
\ and \
$\dot{\theta}\in L^\infty(I)$.
\end{itemize}
Finally, we assume
\begin{itemize}
\item[\emph{(iii)}] $\Omega_\eps$ does not overlap itself
for all sufficiently small~$\eps$.
\end{itemize}
\end{assumption}

The conditions stated in Assumption~\ref{assumgamma}
are quite week and in fact very natural
for the construction of the waveguide~$\Omega_\eps$
and for obtaining reasonable spectral consequences from~\eqref{limit}
(\cf~Section~\ref{seconcl} for further discussion).
Unfortunately, for making our strategy to work
in the case of unbounded waveguides,
we also need to assume the following (seemingly technical) hypothesis.
\begin{assumption}\label{ass2}
For any $f \in L_\mathrm{loc}^\infty(I)$, let us define
\begin{equation}\label{sigmaf}
\sigma_f(\delta(\eps))
:= \sup_{n\in\Z} \sqrt{
\sup_{|\eta|\leq \frac{\delta(\eps)}{2}} \int_{n}^{n+1} \
\left|f(s)-f(s+\eta)\right|^2 ds
} \,.
\end{equation}
where $\eps\mapsto\delta(\eps)$
is some continuous function vanishing with $\eps$.
To give a meaning to~\eqref{sigmaf} for $I\not=\R$,
we assume that~$f$ is extended from~$I$ to~$\R$ by zero.
We make the following two hypotheses
\begin{align}
\label{sigmakknule}
&\lim_{\eps\to 0} \sigma_k(\delta(\eps))
:= \lim_{\eps\to 0} \sum_{\mu=1,2}\sigma_{k_{\mu}}(\delta(\eps)) = 0
\,,
\\
\label{sigmathetaknule}
&\lim_{\eps\to 0} \sigma_{\dot{\theta}} (\tilde{\delta}(\eps)) =0
\,.
\end{align}
for some positive continuous functions $\delta,\tilde{\delta}$
satisfying
\begin{equation}\label{delta.limits}
  \lim_{\eps\to 0} \delta(\eps) = 0
  \,, \qquad
  \lim_{\eps\to 0} \frac{\eps}{\delta(\eps)} = 0
  \,,\qquad
  \lim_{\eps\to 0} \tilde{\delta}(\eps) = 0
  \,.
\end{equation}
\end{assumption}

Assumption~\ref{ass2} is satisfied for a wide class of reference
curves~$\Gamma$ and rotation angles~$\theta$.
First of all, let us emphasize that
it always holds whenever~$I$ is bounded.
Indeed, this is a consequence of the more general fact
that Assumption~\ref{ass2} holds provided that
(the extensions of) the representants of~$f$
are square-integrable functions on~$\R$.
As other sufficient conditions which guarantee
the validity of Assumption~\ref{ass2},
let us mention that it holds whenever the representants
are either Lipschitz, or just uniformly continuous,
or periodic, \emph{etc}.
In any case, it is a non-void hypothesis for unbounded~$I$ only,
when it becomes important to have a control
over the behaviour of~$k_1,k_2$ and~$\dot\theta$ at infinity.

Now we are in a position to state the main result of this paper.
\begin{theorem}\label{Thm.main.intro}
Let Assumption~\ref{assumgamma} and Assumption~\ref{ass2} hold true.
Then there exist positive constants~$\eps_0$ and~$C$
such that for all $\eps \leq \eps_0$,
\begin{multline}\label{NR.bound}
  \left\|
  U(-\Delta_D^{\Omega_\varepsilon} -\varepsilon^{-2} E_1-i)^{-1}U^{-1}
  - (H_\mathrm{eff}-i)^{-1} \oplus 0^\bot
  \right\|_{\mathcal{B}(L^2(I\times\omega))}
  \\
  \leq C \left(
  \eps + \eps \;\! \|\dot{k}_1^\eps\| + \eps \;\! \|\dot{k}_1^\eps\|
  + \sigma_k(\delta(\eps)) + \sigma_{\dot\theta}(\tilde{\delta}(\eps))
  \right)
  \,,
\end{multline}
where~$0^\bot$ denotes the zero operator on the orthogonal
complement of the span of
$
  \{\varphi\otimes\mathcal{J}_1 \,|\, \varphi \in L^2(I)\}
$ and $U=U_2U_1$ is the unitary transform composed of~\eqref{U1} and~\eqref{U2}.
\end{theorem}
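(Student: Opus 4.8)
The plan is to transplant the singular-limit problem onto the fixed Hilbert space $L^2(I\times\omega)$ via the unitary $U=U_2U_1$, but to carry this out with the \emph{mollified} Jacobian rather than the true one, and then to estimate separately (a) the error committed by replacing the true geometry by the smoothed one, and (b) the resolvent difference of the smoothed transformed operator and $\Heff\oplus 0^\bot$. Concretely, I would first set up the sesquilinear form $Q_\eps$ associated with $U(-\Delta_D^{\Omega_\eps})U^{-1}$ on $W_0^{1,2}(I\times\omega)$; using~\eqref{Jacobian} and the normalization~\eqref{U2}, $Q_\eps$ splits into a transverse part producing the constant $\eps^{-2}E_1$ (to be subtracted), a longitudinal part $\int h_\eps^{-1}|\der\psi|^2$, a first-order cross term coming from differentiating $\sqrt{h_\eps}$, and a zeroth-order potential term whose leading contribution is $-\kappa^2/4 + C_\omega\dot\theta^2$. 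Here $h_\eps$ denotes the Jacobian~\eqref{Jacobian} with $k_\mu$ replaced by $k_\mu^\eps$ from~\eqref{aproximace} and $\dot\theta$ by a Steklov mollification $\dot\theta^{\tilde\delta}$ as announced after~\eqref{aproximace}. The crucial structural point, already flagged in item~(III), is that $\der h_\eps$ contains the factor $\eps\dot k_\mu^\eps$, which is controlled by the term $\eps\|\dot k_\mu^\eps\|$ on the right-hand side of~\eqref{NR.bound} and tends to $0$ by~\eqref{delta.limits}, even though $\dot k_\mu^\eps$ itself may blow up.

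Next I would adapt the Friedlander–Solomyak machinery \cite{Friedlander-Solomyak_2007,Friedlander-Solomyak_2008a} announced in strategy~(II): to pass from form convergence to norm-resolvent convergence with a quantitative rate one reduces everything to estimating, for $F,G\in L^2(I\times\omega)$, the difference $|Q_\eps(\psi_\eps,\phi) - q_{\mathrm{eff}}(\psi_0,\phi_0)|$ where $\psi_\eps=(\widetilde H_\eps - i)^{-1}F$, $\psi_0=(\Heff-i)^{-1}G\oplus 0$, and to control the transverse "excited modes" of $\psi_\eps$. The standard device is to write $\psi_\eps = \varphi_\eps\otimes\mathcal{J}_1 + \psi_\eps^\bot$ with $\psi_\eps^\bot$ in the orthogonal complement; the transverse Dirichlet gap $E_2-E_1>0$ together with the a priori bound $Q_\eps(\psi_\eps)\le C$ forces $\|\psi_\eps^\bot\|_{W^{1,2}} = O(\eps)$, which is where the bare $\eps$ in~\eqref{NR.bound} originates. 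One then inserts the decomposition into $Q_\eps$, discards the $\psi^\bot$ contributions up to the $O(\eps)$ error, and is left with a one-dimensional form in $\varphi_\eps$ that must be compared with $\qeff$.

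The comparison with $\qeff$ is where the two Steklov moduli enter. Expanding $h_\eps^{-1}=1+O(\eps)$ uniformly (legitimate for $\eps\le\eps_0$ by Assumption~\ref{assumgamma}(i) and the non-overlapping condition~(iii), since $\Ckappa<\infty$ bounds $k_1^\eps,k_2^\eps$), one finds that the longitudinal part reproduces $-\Delta_D^I$ up to $O(\eps)$, the cross term integrates by parts against $\mathcal{J}_1$ to give $C_\omega\,(\dot\theta^{\tilde\delta})^2$ plus the curvature term $-\tfrac14((k_1^\eps)^2+(k_2^\eps)^2)$ from $|\nabla\sqrt{h_\eps}|^2$, and the remaining pieces are lower order. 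Finally I would replace $k_\mu^\eps$ by $k_\mu$ and $\dot\theta^{\tilde\delta}$ by $\dot\theta$ in these potentials; the $L^1_{\mathrm{loc}}$-to-$L^\infty$ estimate for the difference, measured against an $H^1$ test function localized on unit intervals and summed over $n\in\Z$, is exactly bounded by $\sigma_k(\delta(\eps))$ and $\sigma_{\dot\theta}(\tilde\delta(\eps))$ as defined in~\eqref{sigmaf}, and these vanish by Assumption~\ref{ass2}. Collecting all contributions yields~\eqref{NR.bound}.

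The main obstacle I anticipate is the uniform control in the \emph{unbounded} case: unlike the compact situation, one cannot absorb errors by a global Poincaré inequality on $I$, so every estimate must be performed on unit intervals $(n,n+1)$ and summed, and the only way the potential-replacement errors stay summable is through the supremum-over-$n$ structure of $\sigma_f$ in~\eqref{sigmaf}. Keeping the mollification scales $\delta(\eps)$ and $\tilde\delta(\eps)$ coordinated with $\eps$ via~\eqref{delta.limits}—so that $\eps\|\dot k_\mu^\eps\|=O(\eps/\delta(\eps))\to 0$ while simultaneously $\sigma_k(\delta(\eps))\to 0$—is the delicate balancing act, and verifying that the cross term's integration by parts does not generate uncontrolled boundary contributions at the (possibly infinite) ends of $I$ will require the $W^{1,\infty}_{\mathrm{loc}}$ regularity of $\theta$ in Assumption~\ref{assumgamma}(ii) together with the compact support coming from $\psi_\eps\in W_0^{1,2}$.
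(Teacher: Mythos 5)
Your architecture coincides with the paper's: mollify the Jacobian via the Steklov approximation, compare the transformed quadratic form with the effective one \`a la Friedlander--Solomyak, decompose functions into the ground transverse mode plus its orthogonal complement, and measure the potential-replacement errors by the moduli $\sigma_k$ and $\sigma_{\dot\theta}$ computed on unit intervals. (The paper inserts an intermediate decoupled operator $H_0$ between $H_\eps$ and $\Heff$, but that is only an organizational device.)

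There is, however, one genuine error, and it hides the single delicate point of the whole argument. You claim that the transverse gap forces $\|\psi_\eps^\bot\|_{W^{1,2}}=O(\eps)$. This is false: the a priori bound on the form controls $\eps^{-2}\bigl(\|\nabla'\psi_\eps\|^2-E_1\|\psi_\eps\|^2\bigr)$ and $\|(\partial_s+\dot\theta\ader)\psi_\eps\|^2$, so the gap $E_2-E_1$ yields only $\|\psi_\eps^\bot\|+\|\nabla'\psi_\eps^\bot\|=O(\eps)\sqrt{Q_\eps^\lambda[\psi_\eps]}$, while the longitudinal derivative $\|\partial_s\psi_\eps^\bot\|$ is merely $O(1)$. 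Consequently the cross term
$\intom \dot{\theta}\,\ader\mathcal{J}_1\,\bigl(\psi_1\partial_s\bar{\phi}^{\bot}+\bar{\phi}_1\partial_s\psi^{\bot}\bigr)\,ds\,dt$
cannot be ``discarded up to the $O(\eps)$ error'' as you propose; it carries a longitudinal derivative of the orthogonal part with no smallness attached. This is precisely why the second mollification is indispensable: one writes $\dot\theta=(\dot\theta-\dot\theta^{\tilde\delta})+\dot\theta^{\tilde\delta}$, estimates the first piece by $\sigma_{\dot\theta}(\tilde\delta(\eps))$, and integrates the second by parts in $s$ so that the derivative lands on $\dot\theta^{\tilde\delta}$ and on $\psi_1$ rather than on $\phi^\bot$. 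You do introduce the mollification of $\dot\theta$, but under your (incorrect) $W^{1,2}$ bound it would be superfluous, and as written the step in which the $\psi^\bot$ contributions are dropped fails for exactly this term. Note also that $\dot\theta$ does not appear in the Jacobian \eqref{Jacobian} at all --- only $\theta$ does, and $\theta$ is already locally Lipschitz --- so the mollification of $\dot\theta$ belongs to the treatment of the kinetic term $|(\partial_s+\dot\theta\ader)\psi|^2$, not to the definition of $h_\eps$.
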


Recall that the quantities
$\varepsilon \|\dot{k}_1^\varepsilon\|_\infty$
and $\varepsilon \|\dot{k}_2^\varepsilon\|_\infty$
from the right hand side of~\eqref{NR.bound}
tend to zero as $\eps \to 0$.
Hence Theorem~\ref{Thm.main.intro} indeed implies
the norm-resolvent convergence of the type~\eqref{limit}
and it covers all the known results, and much more.
Furthermore, the right hand side of~\eqref{NR.bound}
explicitly determines the decay rate of the convergence~\eqref{limit}
as a function of the regularity properties of~$k_1,k_2$ and~$\dot\theta$.
Again, it reduces to the well known (see, \eg, \cite{FK4})
$\eps$-type decay rate for (uniformly) Lipschitz~$k_1,k_2$ and~$\dot\theta$
(\cf~Section~\ref{seconcl}).

Assumption~\ref{ass2} actually requires that
the curvatures~$k_1,k_2$ and~$\dot\theta$ are not oscillating
too quickly at infinity (if~$I$ is unbounded).
We leave as an open problem whether
it is possible to have the norm-resolvent convergence
without this hypothesis.

We refer to Section~\ref{seconcl} for further discussion
of the optimality of Theorem~\ref{Thm.main.intro}.

%---------------------%
\section{Preliminaries}\label{Sec.pre}
%---------------------%
%
In the following subsection we introduce the notion
of relatively parallel adapted frame
for any weakly twice differentiable spatial curve.
It is then used to define the tube~$\Omega_\eps$ in Section~\ref{sectube},
while the associated Dirichlet Laplacian is eventually introduced
in Section~\ref{sechamil}.

\subsection{The relatively parallel adapted frame}\label{secframe}
We closely follow the approach of Bishop~\cite{Bishop_1975}
who introduced the relatively parallel adapted frame
for $C^2$-smooth curves. Indeed, the extension to curves which
are only weakly differentiable requires rather minimal modifications.

Given an open interval $I \subset \R$
(finite, infinite or semi-infinite),
let $\Gamma:I\to\R^3$ be a $C^1$-smooth immersion.
Without loss of generality,
we assume that the curve~$\Gamma$ is unit-speed,
\ie\ $|\dot\Gamma(s)|=1$ for all $s \in I$.
Then $T:=\dot{\Gamma}$ represents
a continuous tangent vector field of~$\Gamma$.

A \emph{moving frame} along~$\Gamma$ is
a triplet of differentiable vector fields
$e_i:I\to\R^3$, $i=1,2,3$,
which form a local orthonormal basis, \ie,
%
%\begin{equation}\label{defframe}
$
  e_i(s)\cdot e_j(s)=\delta_{ij}
  %\qquad \forall s\in I
$
%\end{equation}
%
for all $s\in I$.
%(``$\cdot$'' denotes the standard scalar product in $\R^3$).
We say that a moving frame is \emph{adapted} to the curve
if the members of the frame are either tangent of perpendicular to the curve.
The Frenet frame (if it exists)
is the most common example of an adapted frame,
however, in this paper the so-called \emph{relatively parallel
adapted frame} (RPAF) will be used instead of it
(since it always exists).

We say that a normal vector field~$M$ along~$\Gamma$
is \emph{relatively parallel} if its derivative is tangential,
\ie\ $\dot{M} \times T = 0$.
Such a field can be indeed understood as moved by parallel transport,
since it turns only whatever amount is necessary for it to remain normal,
so it is a close to being parallel as possible without losing normality.

The RPAF then consists of the unit tangent vector field~$T$
and two unit normal relatively parallel
and mutually orthonormal vector fields~$M_1, M_2$.
Let us note that for any relatively parallel normal vector field~$M$,
we have
$
  (|M|^2)^{\mbox{\normalsize$\cdot$}}
  = 2 \dot{M}\cdot M = 0
$.
At the same time,
$
  (M_1 \cdot M_2)^{\mbox{\normalsize$\cdot$}}
  = 0
$.
That is, the lengths of the relatively parallel normal vector fields
and the angle between them are preserved.
Consequently, the definition of RPAF makes sense and it indeed represents
an adapted moving frame.

The existence of RPAF for any $C^2$-smooth curve
is proved in~\cite{Bishop_1975}.
However, such a regularity implies that
the curvature $\kappa:=|\ddot{\Gamma}|$ is continuous,
which is still a too strong assumption for us.
Hence, here we provide an extension of the construction of RPAF
to curves which are merely $\Gamma\in W^{2,\infty}_\mathrm{loc}(I;\R^3)$.
This implies that the curvature~$\kappa$ is locally bounded only,
which does not restrict our results whatsoever,
since the stronger assumption $\kappa\in L^{\infty}(I)$
will have to be assumed for other reasons anyway.

\begin{proposition}[Existence of RPAF]\label{proprpnvf}
Let $\Gamma\in W^{2,\infty}_{\mathrm{loc}}(I;\R^3)$
be a unit-speed curve
and let~$M_1^0$ and~$M_2^0$ be two unit normal vectors
at a point $\Gamma(s_0)$ such that
$
  \{T(s_0),M_1^0,M_2^0\}
$
is an orthonormal basis of the tangent space $\mathbb{T}_{\Gamma(s_0)}\R^3$.
Then there exists a unique relatively parallel adapted frame $\{T,M_1,M_2\}$,
such that $M_1(s_0)=M_1^0$ and $M_2(s_0)=M_2^0$.
The vector fields in this frame are continuous
and their weak derivatives exist and are locally bounded.
\end{proposition}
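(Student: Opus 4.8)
The plan is to reduce the existence and uniqueness of the RPAF to a linear first-order ODE system for the coefficients of the normal fields in a fixed (but arbitrary) continuous adapted reference frame, and then invoke the Carath\'eodory theory for ODEs with $L^\infty_{\mathrm{loc}}$ coefficients. First I would fix an auxiliary continuous orthonormal adapted frame $\{T,N_1,N_2\}$ along $\Gamma$: since $\Gamma\in W^{2,\infty}_{\mathrm{loc}}$, the tangent $T=\dot\Gamma$ is locally Lipschitz (in particular continuous), and one can complete $T$ to a continuous orthonormal frame locally by Gram--Schmidt applied to a smooth constant pair of vectors near $s_0$, then extend; alternatively one may work directly with the Frenet-type frame where $\kappa\neq 0$ and patch across the zero set of $\kappa$. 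The key point is only that such a continuous reference frame exists and that $\dot N_1,\dot N_2$ are well defined as $L^\infty_{\mathrm{loc}}$ vector fields with $\dot N_j\cdot N_j=0$, $\dot N_j\cdot T=-N_j\cdot\dot T$.

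Next I would write any candidate relatively parallel normal field as $M=a\,N_1+b\,N_2$ with $a,b$ scalar functions. The relative-parallelism condition $\dot M\times T=0$, equivalently $\dot M\cdot N_1=\dot M\cdot N_2=0$ (the tangential component is free), becomes a homogeneous linear system
\begin{equation}\label{eq.RPAF.ODE}
  \dot a = -\,b\,\omega(s)
  \,,\qquad
  \dot b = \phantom{-}\,a\,\omega(s)
  \,,\qquad
  \omega(s):=\dot N_1(s)\cdot N_2(s)
  \,.
\end{equation}
Since $\Gamma\in W^{2,\infty}_{\mathrm{loc}}$, the function $\omega$ is in $L^\infty_{\mathrm{loc}}(I)$, so Carath\'eodory's existence--uniqueness theorem applies: given the initial data $M_j(s_0)=M_j^0$, i.e.\ $(a_j(s_0),b_j(s_0))$ the coordinates of $M_j^0$ in $\{N_1(s_0),N_2(s_0)\}$, there is a unique absolutely continuous solution $(a_j,b_j)$ on $I$, with $\dot a_j,\dot b_j\in L^\infty_{\mathrm{loc}}$; hence $M_j=a_jN_1+b_jN_2$ is continuous with locally bounded weak derivative, and so is $T$. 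Setting $M_1,M_2$ from the two solutions, I would then verify that $\{T,M_1,M_2\}$ is an orthonormal frame for all $s$: the skew-symmetry of the coefficient matrix in \eqref{eq.RPAF.ODE} gives $(a_i a_j+b_i b_j)^{\mbox{\normalsize$\cdot$}}=0$, so the Gram matrix of $(M_1,M_2)$ is preserved from its value at $s_0$, namely the identity; orthogonality to $T$ is automatic since $M_1,M_2$ stay in the span of $N_1,N_2$. This is precisely the preservation of lengths and angle already remarked before the statement, now made rigorous at the level of the coordinates.

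Finally I would note that the construction is independent of the choice of the auxiliary frame $\{T,N_1,N_2\}$: a different continuous reference frame differs by a continuous (in fact $W^{1,\infty}_{\mathrm{loc}}$, where defined) rotation in the normal plane, which merely conjugates \eqref{eq.RPAF.ODE} and does not affect the solution $M_j$; and uniqueness of the RPAF with the prescribed initial data follows directly from uniqueness in Carath\'eodory's theorem. I expect the only genuine obstacle to be the very first step --- producing a \emph{globally defined} continuous auxiliary adapted frame along $\Gamma$ under the weak hypothesis $\Gamma\in W^{2,\infty}_{\mathrm{loc}}$, since the Frenet frame is unavailable where $\kappa$ vanishes or is merely bounded. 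This is handled by a local-to-global patching argument (the normal bundle of a $C^1$ curve in $\R^3$ is trivial, so a global continuous section exists), after which everything reduces to the elementary linear ODE \eqref{eq.RPAF.ODE}; the bookkeeping that the resulting fields inherit exactly the regularity claimed (continuous, weakly differentiable with $L^\infty_{\mathrm{loc}}$ derivatives) is then routine.
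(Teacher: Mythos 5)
Your proposal is correct and follows essentially the same route as the paper: the paper likewise constructs a local auxiliary adapted frame $\{T,N_1,N_2\}$ (given there by an explicit formula in the components of $T$, well defined on a bounded interval thanks to the bound $|T(s)-T(s_0)|\leq|s-s_0|\,\|\kappa\|_{L^\infty}$), and then rotates it in the normal plane by the angle $\vartheta=\int\omega$ with $\omega$ the non-tangential Cartan coefficient --- which is precisely the explicit quadrature solution of your skew-symmetric ODE system, so invoking Carath\'eodory versus integrating directly is only a cosmetic difference, as is your ODE-based uniqueness versus the paper's constant-length-of-the-difference argument. The only caveat is your suggested alternative of patching Frenet frames across the zero set of $\kappa$, which should be dropped (it is exactly what fails under these hypotheses); your primary construction plus the local-to-global patching is what the paper does.
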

\begin{proof}
By $\Gamma\in W^{2,\infty}_\mathrm{loc}(I;\R^3)$
we mean precisely that $\Gamma^i\in W^{2,\infty}_\mathrm{loc}(I)$ for $i=1,2,3$,
which yields that $\Gamma^i\in C^1(I)$
and the derivative $\dot{\Gamma}^i$
is locally Lipschitz continuous.
This allows us to introduce a continuous unit tangent vector field
$T:=\dot{\Gamma}$ as before and we know that the weak derivative
of~$T$ exists and is locally bounded.

It remains to find the two relatively parallel
normal vector fields $M_1, M_2$.
First of all, let us notice that the uniqueness is trivial:
the difference of two relatively parallel normal vector fields
is also relatively parallel, hence preserves the length.
So if two such coincide at one point,
their difference has constant length zero.

In the first step we find some auxiliary unit normal vector fields
satisfying the initial conditions, \ie\ the vector fields~$N_{1}, N_2$
satisfying $N_{\mu}\cdot T = 0$, $N_{\mu}\cdot N_\nu = \delta_{\mu\nu}$
and $N_{\mu}(s_0)=M_{\mu}^0$, with $\mu,\nu = 1,2$.
Such fields can be constructed locally by employing the continuity of~$T$
and local boundedness of~$\kappa$.
Explicitly, assuming without loss of generality that
one of the coefficients $T^1(s_0)$ or $T^2(s_0)$
is greater or equal to~$1/3$, we can choose, for instance,
$$
  N_1 := \left(
  \frac{-T^2}{\sqrt{(T^1)^2+(T^2)^2}},
  \frac{T^1}{\sqrt{(T^1)^2+(T^2)^2}},
  0
  \right)
  \,, \qquad
  N_2 := T \times N_1
  \,.
$$
By means of the fundamental theorem of calculus,
we can easily establish the inequality
\begin{equation}\label{fundamental}
  |T(s)-T(s_0)| \leq |s-s_0| \, \|\kappa\|_{L^\infty((s_0,s))}
\end{equation}
for every $s \in I$, which shows that $N_1,N_2$
are well defined in a bounded open interval~$J$ around~$s_0$.
From the dependence of the components of~$N_{\mu}$ on~$T$,
we deduce that both $N_{\mu} \in W^{1,\infty}(J;\R^3)$.

In the second step we have to realize that it is always possible
to find a continuous and in the weak sense differentiable function
$\vartheta:J\to\R$
satisfying $\vartheta(s_0)=0$ and such that the normal vector field
$M_1:=N_1\cos{\vartheta} + N_2\sin{\vartheta}$ is relatively parallel in~$J$.
This is easily established by expressing the derivative
of the triple $\{T,N_1,N_2\}$ by means of an antisymmetric
Cartan matrix and by choosing~$\vartheta$ as primitive
of the non-tangential coefficient of the matrix
coming from the derivatives of~$N_1,N_2$.
Then also $M_2:=-N_1\sin{\vartheta} + N_2\cos{\vartheta}$
is relatively parallel, both $M_{\mu} \in W^{1,\infty}(J;\R^3)$
and $M_\mu(s_0)=M_\mu^0$.

Finally, to get the global existence on~$I$,
we can patch together the local RPAFs,
which exist in a covering by bounded intervals
because of~\eqref{fundamental}.
The regularity at the points where they link together
is a consequence of the uniqueness part.
\end{proof}

If $\{T,M_1,M_2\}$ is a RPAF,
we have
\begin{equation}\label{cartan}
\left(
\begin{array}{c}
  T   \\
  M_1 \\
  M_2 \\
\end{array}
\right)^{\!\!\mbox{\large$\cdot$}} =
\left(
\begin{array}{ccc}
  0    & k_1 & k_2\\
  -k_1 & 0   & 0  \\
  -k_2 & 0   & 0  \\
\end{array}
\right)\left(
\begin{array}{c}
  T\\
  M_1 \\
  M_2\\
\end{array}
\right).
\end{equation}
Due to $\Gamma\in W^{2,\infty}_\mathrm{loc}(I;\R^3)$,
functions~$k_1$ and~$k_2$ are locally bounded,
however they do not need to be neither differentiable nor continuous.

Analogous functions for the Frenet frame,
\ie\ the curvature~$\kappa$ and torsion~$\tau$,
are uniquely determined for a non-degenerate curve (\ie~$\kappa>0$).
Let us examine the uniqueness of $k_1,k_2$ in our general situation.
Proposition~\ref{proprpnvf} says that for a given curve,
RPAF is unique if the initial vectors $M_1^0,M_2^0$
at some point~$s_0$ are specified.
For rotated initial vectors
$\tilde{M}_{\mu}^0 := \sum_{\nu = 1}^2\mathcal{R}_{\mu\nu}M_{\nu}^0$,
with $\mu=1,2$,
where $\mathcal{R}$ is any constant $2\times 2$ orthogonal matrix,
a different RPAF is obtained in general.
Consequently, the functions $k_{\mu}$ transfers to
$\tilde{k}_{\mu} = \sum_{\nu = 1}^2\mathcal{R}_{\mu\nu} k_{\nu}$,
with $\mu = 1,2$.
Hence the curvatures $k_1,k_2$ are not unique for the curve.

On the other hand, we have
\begin{equation}\label{defkappa}
  \kappa = |\dot{T}| = |k_1 M_1 + k_2 M_2|
  = \sqrt{k_1^2 + k_2^2}
  \,,
\end{equation}
hence the magnitude of the vector $(k_1,k_2)$
is independent of the choice of RPAF.
Finally, let us assume that the curve~$\Gamma$ possesses
the distinguished Frenet frame and let us denote by~$N$
the principal normal and by~$B$ the binormal.
It is easy to check that the pair of vectors $\{M_1,M_2\}$
is rotated with respect to $\{N,B\}$ by the angle
\begin{equation}\label{anglefrenetRPAF}
  \vartheta(s) = \vartheta_0 + \int_{s_0}^{s} \tau(\xi) \, d\xi
  \,,
\end{equation}
where $\vartheta_0$~is the angle between vectors $M_1(s_0)$ and $N(s_0)$.
Consequently, $\tau=\dot\vartheta$.
Writing, $(k_1,k_2)=(\kappa\cos\vartheta,\kappa\sin\vartheta)$,
we can conclude that~$\kappa$ and an indefinite integral of~$\tau$
represent polar coordinates for the curve $(k_1,k_2)$,
as pointed out in~\cite{Bishop_1975}.

In Figure~\ref{frenetvsrpaf} we can see
how the pair of Frenet normal vectors $\{N,B\}$
versus relatively parallel normal vectors $\{M_1,M_2\}$
move along a helix.
The longer side of the rectangular cross-section
corresponds to the direction of~$N$ (on the left)
and~$M_1$ (on the right),
whereas the shorter side is the direction of~$B$
and~$M_2$, respectively.
In the bottom of the Figure, the two frames coincide.

\begin{figure}[h]
\begin{center}
\includegraphics[width=10 cm]{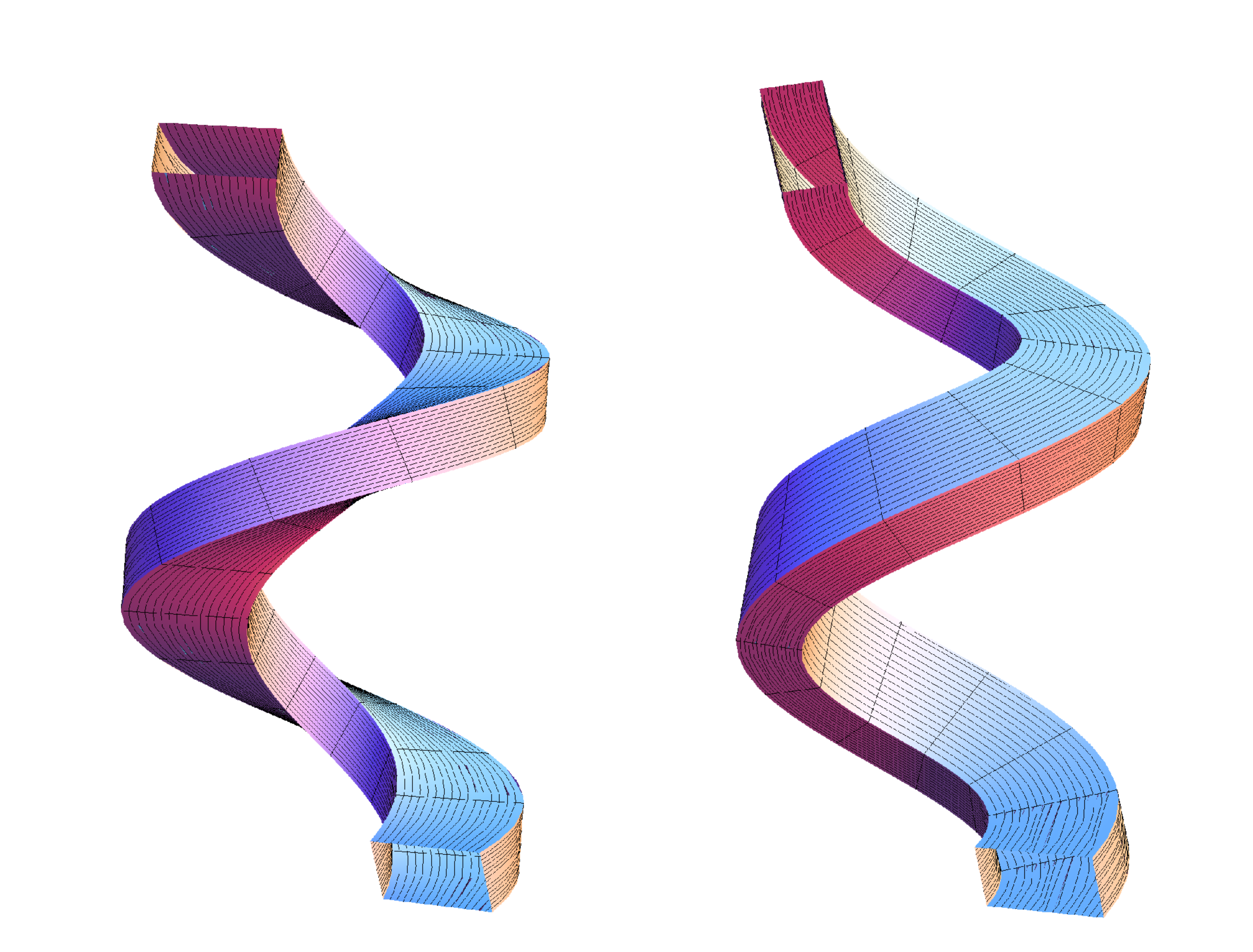}
\caption{A waveguide with rectangular cross-section built along a helix.
In the left figure the cross-section moves according
to the Frenet frame, \ie~$\dot{\theta}_F = 0$,
so that the waveguide is \emph{twisted}
because $\dot{\theta} = -\tau \not= 0$.
In the right figure the cross section moves according to the RPAF,
\ie~$\dot{\theta} = 0$;
we say that such a waveguide is \emph{untwisted}.}\label{frenetvsrpaf}
\end{center}
\end{figure}
\begin{remark}\label{remarknormres}
In Assumption~\ref{ass2} of Theorem~\ref{Thm.main.intro},
we state some requirements on the curvature functions $k_1,k_2$
that are not uniquely determined for the reference curve,
as we have seen in this subsection.
However, let us fix some particular RPAF
with curvatures $k_1^0,k_2^0$
and recall that the curvatures for different RPAFs
are only the linear combination of $k_1^0$ and $k_2^0$.
When we examine the condition~\eqref{sigmakknule},
we easily find that if $k_1^0,k_2^0$ satisfy it,
then all their linear combinations do satisfy it as well
(due to the triangle inequality in $L^2$).
Hence there is no ambiguity in Theorem~\ref{Thm.main.intro}.
\end{remark}
\subsection{The geometry of the tube}\label{sectube}
As mentioned in Introduction,
the tubes we consider in this paper are obtained
by translating and rotating a two-dimensional cross-section
along a spatial curve~$\Gamma$.
This definition can be formalized by means
of the RPAF $\{T,M_1,M_2\}$ found in the previous section.

The cross-section of our tube can be quite arbitrary.
We only assume that~$\omega$ is a bounded open connected subset of~$\R^2$.
The boundedness implies that the quantity
\begin{equation}\label{defa}
a := \sup_{t\in\omega}|t|
%<\infty
\end{equation}
is finite.
We say that~$\omega$ is \emph{circular}
if it is a disc or an annulus centered at the origin of~$\R^2$
(with the usual convention of identifying open sets
which differ on the set of zero capacity).

Given an angle function $\theta \in W_\mathrm{loc}^{1,\infty}(I)$,
let us define a rotation matrix
$$\mathcal{R}^{\theta} = \left(
\begin{array}{cc}
  \cos{\theta}    & \sin{\theta} \\
  -\sin{\theta} & \cos{\theta}   \\
\end{array}
\right)
.
$$
Then we define a general moving frame $\{M_1^{\theta}, M_2^{\theta}\}$
along~$\Gamma$ by rotating the RPAF $\{M_1, M_2\}$
by the angle~$\theta$, \ie,
$$
  M_{\mu}^{\theta} = \sum_{\nu=1}^2 \mathcal{R}^{\theta}_{\mu\nu}M_{\nu}
  \,, \qquad
  \mu=1,2
  \,.
$$

Let $\Omega_0:=I\times\omega$ be a straight tube.
We introduce a curved tube~$\Omega_\eps$
of uniform cross-section $\eps\omega$
as the image
\begin{equation}\label{tube.image}
  \Omega_\eps := \mathcal{L}(\Omega_0)
  \,,
\end{equation}
where the mapping
$\mathcal{L}:$ $\Omega_0\rightarrow \R^3$
is defined by
\begin{equation}\label{zavedeniL}
\mathcal{L}(s,t) := \Gamma(s) + \eps \sum_{\mu=1}^2 t_{\mu} M_{\mu}^{\theta}
\,.
\end{equation}

We say that the tube~$\Omega_\eps$ is \emph{bent}
if the reference curve~$\Gamma$ is not a straight line,
\ie~$\kappa\not=0$.
We say that~$\Omega_\eps$ is \emph{untwisted}
if~$\omega$ is circular or the cross-section is moved along~$\Gamma$
by a RPAF, \ie~$\dot\theta=0$;
otherwise the tube is said to be \emph{twisted} (for example of twisted and untwisted tube see Figure~\ref{frenetvsrpaf}).
A list of equivalent conditions for twisting
can be found in~\cite{K6-with-erratum}.

It is usual in the theory of quantum waveguides
to assume the tube~$\Omega_\eps$ is non-self-intersecting,
\ie, $\mathcal{L}$~is injective.
The necessary (but not always sufficient) condition
for the injectivity is the non-vanishing determinant of the metric tensor
$$
G_{ij}:=\partial_i \mathcal{L}\cdot \partial_j \mathcal{L}.
$$
Here $\partial_i$ denotes the partial derivative with respect
to the~$i^{th}$ variable,
where the ordered set ($s$, $t_1$, $t_2$) corresponds to (1,2,3).
Employing~\eqref{cartan},
it is straightforward to check that
the matrix $G=(G_{ij})$ reads
\begin{equation}\label{metrika}
G = \left(\begin{array}{ccc}
  h^2 + \eps^2(h_2^2 + h_3^2)    & -\eps^2 h_3 & -\eps^2 h_2\\
 -\eps^2 h_3 & \eps^2   & 0  \\
  -\eps^2 h_2 & 0   & \eps^2  \\
\end{array}
\right)
,
\end{equation}
where
\begin{equation}\label{hdef}
\begin{aligned}
h(\cdot,t)&:=1 - \eps \, t_1\left(k_1\cos{\theta} + k_2\sin{\theta}\right)
- \eps \, t_2\left(-k_1\sin{\theta}+k_2\cos{\theta}\right) , \\
h_2(\cdot,t)&:=-t_1\,\dot{\theta} , \\
h_3(\cdot,t)&:=t_2\,\dot{\theta} .
\end{aligned}
\end{equation}

We have
\begin{equation}\label{determinant}
|G| := \det(G) =\eps^4 \;\! h^2
\,,
\end{equation}
hence the condition on the determinant being everywhere nonzero
requires that~$h$ is a positive function.
%$$
%\eps t_1\left(k_1\cos{\theta} + k_2\sin{\theta}\right) + \eps t_2\left(-k_1\sin{\theta}+k_2\cos{\theta}\right) < 1 \qquad \forall(s,t)\in \Omega_0.
%$$
The latter can be satisfied only if the functions $k_1,k_2$ are bounded.
Therefore we always assume
\begin{equation}\label{Ass.basic}
  \kappa \in L^\infty(I)
  \,,
\end{equation}
which is equivalent to the boundedness of $k_1,k_2$ due to~\eqref{defkappa}.
In particular, we have
\begin{equation}\label{omezenostk}
\|k_{\mu}\|_\infty
\leq \|\kappa\|_{\infty}<\infty,
\qquad \mu=1,2,
\end{equation}
where
$
  \|\cdot\|_{\infty} := \|\cdot\|_{L^\infty(I)}
$.
%
%\begin{equation}\label{omezenostk}
%k_{\mu}(s)\leq \|\kappa\|_{\infty}<\infty \,,
%\qquad \forall s\in I, \quad \mu=1,2\,,
%\end{equation}
%
%where
%$
%  \|\cdot\|_{\infty} := \|\cdot\|_{L^\infty(I)}
%$.
Using in addition the boundedness of~$\omega$,
we find the bound
\begin{equation}\label{Ass.basic.pre}
  h(s,t) \geq 1 - \eps \;\! a \;\! \|\kappa\|_{\infty}
\end{equation}
for every $(s,t)\in\Omega_0$.
This ensures the positivity of~$h$
for all sufficiently small~$\eps$.

%Moreover, in the estimates we will often use that $\eps$ can be chosen so small that $16 \eps a \Ck \leq 1$ and thus
%\begin{equation}\label{omezh}
%\frac{3}{4}\leq 1-\eps a C_k \leq h \leq 1+\eps a C_k\leq \frac{5}{4}.
%\end{equation}

If the determinant~\eqref{determinant} is positive,
the matrix~\eqref{metrika} is invertible and we have
\begin{equation*}
  G^{-1} = \frac{1}{h^2}
  \begin{pmatrix}
    1 & h_3 & h_2 \\
    h_3 & \eps^{-2} h^2+h_3^2 & h_2 h_3 \\
    h_2 & h_3 h_2 & \eps^{-2} h^2+h_2^2 \\
  \end{pmatrix}
  .
\end{equation*}

Summing up, assuming~\eqref{Ass.basic} and the injectivity of~$\mathcal{L}$,
the mapping induces a global diffeomorphism
between the straight tube~$\Omega_0$ and~$\Omega_\eps$,
and the latter has the usual meaning of a non-self-intersecting
curved tube embedded in~$\R^3$.
For sufficient conditions ensuring the injectivity of~$\mathcal{L}$
we refer to \cite[App.~A]{EKK}.
The above construction gives rise to Assumption~\ref{assumgamma}.

\begin{remark}\label{Rem.self}
Abandoning the geometrical interpretation of~$\Omega_\eps$
being a non-self-intersecting tube in~$\R^3$,
it is possible to consider $(\Omega_0,G)$
as an abstract Riemannian manifold,
not necessarily embedded in~$\R^3$.
This makes~\eqref{Ass.basic}
(together with the smallness of~$\eps$
to ensure that the right hand side of~\eqref{Ass.basic.pre}
is positive) the only important hypothesis in the present study.
In other words, the injectivity assumption~(iii)
in Assumption~\ref{assumgamma} can be relaxed,
the results of the present paper hold
in this more general situation.
\end{remark}
\subsection{The Hamiltonian}\label{sechamil}
Let us now consider~$\Omega_\eps$ as the configuration space
of a quantum waveguide.
We assume that the motion of a quantum particle
inside the waveguide is effectively free
and that the particle wavefunction is suppressed
on the boundary of the tube.
Hence, setting $\hbar=2m=1$,
the one-particle Hamiltonian acts as the Laplacian on $L^2(\Omega_\eps)$
subjected to Dirichlet boundary conditions on~$\partial\Omega_\eps$:
\begin{equation}\label{puvodniham}
-\Delta^{\Omega_\eps}_D.
\end{equation}
The objective of this subsection is to give a precise
meaning to this operator.

The most straightforward way is to assume that~$\mathcal{L}$
is injective and define~\eqref{puvodniham}
as the \emph{Dirichlet Laplacian} on $L^2(\Omega_\eps)$.
Indeed, this is well defined for open sets
and from the previous subsection we know that
$\mathcal{L}$~induces a global diffeomorphism,
so that, in particular, $\Omega_\eps$~is open.
More specifically,
the Dirichlet Laplacian $-\Delta^{\Omega_\eps}_D$
is introduced as the self-adjoint operator associated on $L^2(\Omega_\eps)$
with the closed quadratic form
$$
  Q_D^{\Omega_\eps}[\psi]:=\|\nabla\psi\|^2_{L^2(\Omega_\eps)}
  \,,
  \qquad \Dom(Q_D^{\Omega_\eps}) = W^{1,2}_0(\Omega_\eps).
$$

From this point of view, we regard the tube as a submanifold of~$\R^3$.
For the description of~$\Omega_\eps$,
the most suitable coordinates are the curvilinear `coordinates'
$(s,t)\in\Omega_0$ defined via the mapping~$\mathcal{L}$ in~\eqref{zavedeniL}.
They are implemented by means of the unitary transform
\begin{equation}\label{unitary1}
  U_1 : L^2(\Omega_\varepsilon)
  \to \tilde{\H}_{\eps} := L^2\big(
  \Omega_0,|G(s,t)|^{1/2}\;\!ds\;\!dt
  \big):
  \big\{
  \psi \mapsto \psi \circ \mathcal{L}
  \big\}
\end{equation}
mentioned already in \eqref{U1}.
The transformed operator $\tilde{H}_\eps:=U_1(-\Delta^{\Omega_\eps}_D)U_1^{-1}$
can be determined as the operator associated with the transformed form
\begin{equation}\label{formalapbel}
\tilde{Q}_{\eps}[\psi]
:= Q_D^{\Omega_\eps}[U_1^{-1}\psi]
= \left(
\partial_i\psi,G^{ij}\partial_j\psi
\right)_{\tilde{\H}_{\eps}}
\,, \qquad
\Dom(\tilde{Q}_\eps) := U_1 W^{1,2}_0(\Omega_\eps)
\,.
\end{equation}
Here $G^{ij}$ are coefficients of the inverse metric~\eqref{metrika}
and the Einstein summation convention is adopted
(the range of indices~$i,j$ being $1,2,3$).
In a weak sense, $\tilde{H}_\eps$~acts as the Laplace-Beltrami operator
$
  -|G|^{-1/2}\partial_i|G|^{1/2} G^{ij}\partial_j
$,
but we shall not need this fact,
working exclusively with quadratic forms in this paper.

Let us emphasize that,
for the quadratic form~$\tilde{Q}_{\eps}$ to be well defined,
the matrix~$G$ does not need to be differentiable,
a local boundedness of its elements is sufficient.
As a matter of fact,
the form domain $U_1 W^{1,2}_0(\Omega_0)$ can be alternatively
characterized as the completion of $C_0^\infty(\Omega_0)$
with respect to the norm
$$
  \|\psi\|_{\tilde{Q}_{\eps}} := \sqrt{
  \left(
  \partial_i\psi,G^{ij}\partial_j\psi
  \right)_{\tilde{\H}_{\eps}}
  + \|\psi\|_{\tilde{\H}_{\eps}}^2
  }
  \,.
$$
If the functions~$\kappa$ and~$\dot\theta$ are bounded,
it is possible to check that the $\tilde{Q}_{\eps}$-norm
is equivalent to the usual norm in $W^{1,2}(\Omega_0)$.
For this reason, in addition to~\eqref{Ass.basic},
we assume henceforth the global boundedness
\begin{equation}\label{Ass.basic.bis}
  \dot\theta \in L^\infty(I)
  \,.
\end{equation}
Then we have
\begin{equation}\label{form.domain}
  \Dom(\tilde{Q}_\eps) = W^{1,2}_0(\Omega_0)
  \,.
\end{equation}
\begin{remark}\label{Rem.self.bis}
Now, if~$\mathcal{L}$ is not injective,
the image~\eqref{tube.image} might be quite complex
and the standard notion of the Dirichlet Laplacian
on~$\Omega_\eps$ meaningless.
Nevertheless, the Laplace-Beltrami operator
on the Riemannian manifold $(\Omega_0,G)$,
\ie\ the operator associated
on~$\tilde{\H}_\eps$ with the closure of the form
defined via the second identity in~\eqref{formalapbel}
on the initial domain $C_0^\infty(\Omega_0)$,
is fully meaningful.
Moreover, it coincides with~$\tilde{H}_\eps$ defined above.
This is the way how to transfer the results of the present paper
to the more general situation of Remark~\ref{Rem.self}.
In particular, Theorem~\ref{Thm.main.intro} holds
without the injectivity assumption~(iii)
in Assumption~\ref{assumgamma}
provided that we properly reinterpret
the meaning of~\eqref{puvodniham}
as the Laplace-Beltrami operator~$\tilde{H}_\eps$ in $(\Omega_0,G)$
and we write just~$U_2$ instead of~$U$
in the statement of the theorem
when dealing with the more general situation.
\end{remark}

Finally, let us recall that the spectrum of~$\tilde{H}_{\eps}$
explodes as $\eps^{-2} E_1$ in the limit as $\eps \to 0$.
This is related to the fact that the ground-state eigenvalue
of the cross-sectional Laplacian $-\Delta_D^{\eps\omega}$
equals $\eps^{-2} E_1$.
Therefore, to get a non-trivial limit,
we rather consider the renormalized operator
$$
  \tilde{\tilde{H}}_\eps := \tilde{H}_{\eps} - \eps^{-2} E_1
$$
in the sequel.

%----------------------------------%
\section{The mollification strategy}\label{Sec.Steklov}
%----------------------------------%
%
Our strategy how to reduce the regularity assumptions
about the waveguide
consists of the three points (I)--(III)
roughly mentioned in Section~\ref{Sec.strategy.intro}.
The first of them, \ie~the usage of RPAF instead of the Frenet frame,
was already explained in Section~\ref{secframe}.
The item~(II) consists in working with associated sesquilinear forms
instead of operators.
In the preceding Section~\ref{sechamil},
we introduced the Dirichlet Laplacian in the tube~$\Omega_\eps$
using exclusively quadratic forms
and it enables us to understand the derivatives
in the weak sense and to reduce the requirements
on the differentiability of the reference curve.

However, as explained in Section~\ref{Sec.strategy.intro},
for the standard operator procedure to work,
certain additional smoothness of curvature functions are still needed.
In this section we propose a method how to proceed
without any extra regularity hypotheses.
It is based on the mollification procedure~(III)
sketched in Section~\ref{Sec.strategy.intro}
and we believe it might be useful in other problems as well.

\subsection{The modified unitary transform}\label{secsmooth}
As explained in Section~\ref{Sec.strategy.intro},
the main idea consists in mollifying the curvatures $k_1,k_2$
by means of the Steklov approximation to get $k_1^\eps,k_2^\eps$
introduced in~\eqref{aproximace}.
If~$I$ is finite or semi-infinite, we adopt the convention
of Assumption~\ref{ass2} to give a meaning
to function values outside~$I$ in the definition.
That is, we assume that~$k_\mu$, with $\mu=1,2$,
are extended from~$I$ to~$\R$ by zero.

The definition~\eqref{aproximace}
involves a positive continuous function
$\eps \mapsto\delta(\eps)$
which is supposed to satisfy
\begin{equation}\label{deltaknule}
\lim_{\eps\rightarrow 0}\delta(\eps)= 0
\qquad\textrm{and}\qquad
\lim_{\eps\rightarrow 0}\frac{\eps}{\delta(\eps)}= 0.
\end{equation}
Here the first assumption is reasonable since then
$k_{\mu}^{\eps} \xrightarrow[]{\eps\rightarrow 0} k_{\mu}$
in a certain sense (see Section~\ref{secsteklov} below).
The relevance of the second condition will become clear
in our computations.

It follows from~\eqref{defkappa} that
\begin{equation}\label{omezenostkeps}
    \|k_{\mu}^{\eps}\|_\infty \leq \Ckappa,
    \qquad {\mu}=1,2.
\end{equation}
Furthermore, the mollified functions are differentiable
for any positive~$\eps$,
\begin{equation}\label{deraproximace}
\dot{k}^{\eps}_{\mu}(s)
=\frac{k_{\mu}(s+\frac{\delta(\eps)}{2})
-k_{\mu}(s-\frac{\delta(\eps)}{2})}{\delta(\eps)},
\qquad {\mu}=1,2
,
\end{equation}
although the derivative might diverge in the limit as $\eps \to 0$
for non-differentiable functions~$k_\mu$.

We also introduce a smoothed version
of the `Jacobian'~\eqref{hdef}
$$
  \heps(\cdot,t)
  := 1 - \eps \, t_1\left(\keps_1\cos{\theta} + \keps_2\sin{\theta}\right)
  - \eps \,  t_2\left(-\keps_1\sin{\theta}+\keps_2\cos{\theta}\right)
$$
and of the determinant~\eqref{determinant},
$
  |\tilde{G}|:= \eps^4 \heps^2
$.
The latter will be used to generate a modified version
of the standard unitary transform~$U_2$ from Section~\ref{Sec.strategy.intro}.
We define
\begin{equation}\label{U2.tilde}
  \tilde{U}_{2}: \tilde{\H}_{\eps} \to \H_{\eps} :=
  L^2\left(
  \Omega_0, \frac{|G(s,t)|^{1/2}}{|\tilde{G}(s,t)|^{1/2}}\,ds\,dt
  \right):
  \left\{ \psi \mapsto |\tilde{G}|^{1/4} \psi \right\}
  .
\end{equation}
The norm and inner product
in the Hilbert space~$\H_{\eps}$
will be denoted by $\|\cdot\|_{\eps}$
and $(\cdot,\cdot)_\eps$, respectively.

In addition to~\eqref{Ass.basic},
let us assume~\eqref{Ass.basic.bis} in the following,
so that the form domain of~$\tilde{H}$
is given by~\eqref{form.domain}.
Since the Sobolev space $W_0^{1,2}(\Omega_0)$ is left invariant
by the modified transform~$\tilde{U}_{2}$, the operator
\begin{equation}\label{defheps}
H_{\eps}:=\tilde{U}_{2}\tilde{\tilde{H}}_{\eps}\tilde{U}_{2}^{-1}
= |\tilde{G}|^{1/4}\tilde{\tilde{H}}_{\eps}|\tilde{G}|^{-1/4}
\end{equation}
is well defined in the form sense.
The associated quadratic form reads
\begin{align}
\label{qeps}Q_{\eps}[\psi]
=&
\intom\frac{1}{hh_{\eps}}
\left|\der\psi\right|^2 \,ds\,dt
+ \frac{1}{\eps^2}\intom \frac{h}{\heps}|\nabla'\psi|^2\,ds\,dt
- \frac{E_1}{\eps^2}\intom \frac{h}{\heps}|\psi|^2\,ds\,dt
\\
\nonumber&+\,\frac{1}{2}
\intom\frac{1}{\heps^2}(k_1\keps_1+k_2\keps_2)|\psi|^2\,ds\,dt
- \frac{3}{4}\intom\frac{h}{\heps^3}
\left((\keps_1)^2+(\keps_2)^2\right)|\psi|^2\,ds\,dt
\\
\nonumber&+\intom\frac{\left(\der\heps\right)^2}{4h\heps^3} |\psi|^2\,ds\,dt
- \intom\frac{\der\heps}{h\heps^2}\,\Re(\bar{\psi}\der\psi)\,ds\,dt,
\end{align}
with $\psi\in\Dom(Q_\eps)=W_0^{1,2}(\Omega_0)$.
Here $\nabla'$ is the gradient operator in
the `transverse' variables $(t_1,t_2)$
and~$\ader$ is the transverse angular-derivative operator
$$
  \ader
  := (t_2,-t_1)\cdot\nabla'
  = t_2\frac{\partial}{\partial t_1}-t_1\frac{\partial}{\partial t_2}
  \,.
$$

An important feature of the operator $H_{\eps}$ is
its boundedness from below.
We prove it together with another relation used in our computations below.
\begin{lemma}\label{lemmafiner}
Let $Q_{\eps}$ be the quadratic form defined by~\eqref{qeps}
and let~\eqref{deltaknule} be satisfied.
Then for all $\psi\in W^{1,2}_0(\Omega_0)$ and small enough $\eps$
\begin{equation}\label{qepsgeq}
Q_{\eps}[\psi] \geq \frac{1}{2}\intom\frac{1}{hh_{\eps}}
\left|\der\psi\right|^2 \,ds\,dt
- 9 \Ckappa^2 \|\psi\|^2_{\eps}.
\end{equation}
\end{lemma}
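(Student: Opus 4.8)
The plan is to obtain the lower bound directly from the explicit form~\eqref{qeps} by throwing away the manifestly nonnegative transverse terms and controlling the remaining five terms from below. First I would note that, by~\eqref{Ass.basic.pre} and its analogue for~$\heps$ (using~\eqref{omezenostkeps}), both~$h$ and~$\heps$ are bounded between~$1-\eps a\Ckappa$ and~$1+\eps a\Ckappa$, hence uniformly close to~$1$ for small~$\eps$; in particular the prefactors $h/\heps$, $1/\heps^2$, $h/\heps^3$, etc., are all bounded by a constant close to~$1$. The key positivity input is the spectral inequality $\|\nabla'\psi(s,\cdot)\|_{L^2(\omega)}^2 \geq E_1 \|\psi(s,\cdot)\|_{L^2(\omega)}^2$ for a.e.~$s$, which lets me write
\begin{equation*}
  \frac{1}{\eps^2}\intom \frac{h}{\heps}|\nabla'\psi|^2
  - \frac{E_1}{\eps^2}\intom \frac{h}{\heps}|\psi|^2 \geq 0 ,
\end{equation*}
so these two $\eps^{-2}$-singular terms cancel against each other and drop out entirely.

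Next I would handle the four genuinely lower-order potential-like terms on the right of~\eqref{qeps} (lines two and three of that display). The two terms on line two are bounded in absolute value by $\tfrac{1}{2}\cdot\tfrac{1}{(1-\eps a\Ckappa)^2}\cdot 2\Ckappa^2\|\psi\|_\eps^2$ and $\tfrac34\cdot\tfrac{1+\eps a\Ckappa}{(1-\eps a\Ckappa)^3}\cdot 2\Ckappa^2\|\psi\|_\eps^2$ respectively, using~\eqref{omezenostk}, \eqref{omezenostkeps}; for $\eps$ small the constants are less than, say, $2$ and $4$, giving a combined loss of at most $6\Ckappa^2\|\psi\|_\eps^2$. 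For the $\der\heps$ term I would first compute $\der\heps = \der(1-\eps t_1(\keps_1\cos\theta+\keps_2\sin\theta)-\eps t_2(-\keps_1\sin\theta+\keps_2\cos\theta))$, which produces terms of size $O(\eps)\cdot(|\dot k^\eps_\mu|+|\dot\theta|\,\Ckappa)$ — the crucial point being that the $\dot k^\eps_\mu$-factors come multiplied by~$\eps$, and $\eps\|\dot k^\eps_\mu\|_\infty\to 0$ by~\eqref{deraproximace} together with~\eqref{deltaknule}, so $\|\der\heps\|_\infty = o(1)$ as $\eps\to 0$. Hence the term $\intom \tfrac{(\der\heps)^2}{4h\heps^3}|\psi|^2$ is $o(1)\|\psi\|_\eps^2$ and contributes negligibly.

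The only term that is not obviously absorbable is the last, $-\intom \tfrac{\der\heps}{h\heps^2}\Re(\bar\psi\,\der\psi)$, because it couples~$\psi$ with its longitudinal derivative~$\der\psi$; this is the main obstacle. I would control it by Cauchy--Schwarz with a weight: for any $\alpha>0$,
\begin{equation*}
  \left|\intom \frac{\der\heps}{h\heps^2}\,\Re(\bar\psi\,\der\psi)\right|
  \leq \alpha \intom \frac{1}{hh_\eps}|\der\psi|^2
  + \frac{1}{4\alpha}\intom \frac{(\der\heps)^2 h_\eps}{h\heps^2}\,|\psi|^2 .
\end{equation*}
Choosing $\alpha=\tfrac12$ leaves exactly $\tfrac12\intom\tfrac{1}{hh_\eps}|\der\psi|^2$ on the good side — matching the claimed coefficient in~\eqref{qepsgeq} — while the compensating term is again $O(\|\der\heps\|_\infty^2)\|\psi\|_\eps^2 = o(1)\|\psi\|_\eps^2$ by the estimate from the previous paragraph. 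Collecting everything: the transverse and $\eps^{-2}$ parts give $\geq 0$, the two line-two potentials cost at most $6\Ckappa^2\|\psi\|_\eps^2$, the $(\der\heps)^2$-term and the Cauchy--Schwarz remainder together cost $o(1)\|\psi\|_\eps^2 \leq 3\Ckappa^2\|\psi\|_\eps^2$ for $\eps$ small enough (adjusting $\eps_0$), and one keeps $\tfrac12\intom\tfrac{1}{hh_\eps}|\der\psi|^2$. This yields $Q_\eps[\psi]\geq \tfrac12\intom\tfrac{1}{hh_\eps}|\der\psi|^2 - 9\Ckappa^2\|\psi\|_\eps^2$, as required; in particular $H_\eps$ is bounded below.
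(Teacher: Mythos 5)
There is a genuine gap in your treatment of the $\eps^{-2}$ terms, and these are precisely the dangerous part of the form where the real work of the lemma lies. You claim
\[
\frac{1}{\eps^2}\intom \frac{h}{\heps}\,|\nabla'\psi|^2 \,ds\,dt
- \frac{E_1}{\eps^2}\intom \frac{h}{\heps}\,|\psi|^2 \,ds\,dt \;\geq\; 0
\]
as a direct consequence of the slice-wise spectral inequality $\int_\omega|\nabla'\psi|^2\,dt\geq E_1\int_\omega|\psi|^2\,dt$. But the weight $h/\heps$ depends on the transverse variables: both $h$ and $\heps$ are affine in $(t_1,t_2)$ with \emph{different} coefficients ($k_\mu$ versus $\keps_\mu$), so the ratio is a nonconstant function of $t$ and cannot be pulled through the Poincar\'e inequality. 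For a nonconstant weight $w$ the weighted Rayleigh quotient $\inf\int_\omega w|\nabla'u|^2/\int_\omega w|u|^2$ is not bounded below by $E_1$ in general. If you instead bound the weight crudely by $1\pm\eps a\Ckappa$ on each of the two integrals separately, the constants mismatch and you only obtain a lower bound of order $-\eps^{-1}E_1 a\Ckappa\,\|\psi\|_\eps^2$, which diverges. The paper's proof handles this by the substitution $\phi:=\sqrt{h/\heps}\,\psi$, applying the unweighted Poincar\'e inequality to $\phi$ and collecting the commutator terms generated by $\nabla'(h/\heps)=O(\eps\Ckappa)$; since these enter quadratically they are $O(\eps^2\Ckappa^2)$, survive the division by $\eps^2$, and yield the bound $\geq -3\Ckappa^2\|\psi\|_\eps^2$ rather than $\geq 0$. (That the loss is $O(1)$ and not $O(\eps^{-1})$ reflects the fact that the first-order perturbation of the ground-state Rayleigh quotient by a weight affine in $t$ vanishes -- but this must be proved via the substitution, not assumed.)

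The remainder of your argument is sound and coincides with the paper's: the uniform bounds $3/4\leq h,\heps\leq 5/4$ for small $\eps$, the absolute-value estimates on the two curvature-potential terms, the observation that $\der\heps=O(\eps\|\dot{k}^\eps_\mu\|_\infty)+O(\eps)=o(1)$ thanks to $\eps/\delta(\eps)\to 0$, and the weighted Young splitting of the cross term with $\alpha=1/2$ so as to retain the coefficient $\tfrac12$ in front of the longitudinal kinetic term. Once the $\eps^{-2}$ step is repaired as above, the bookkeeping still closes within the constant $9\Ckappa^2$ for small enough $\eps$ (the paper's count is $5+3+1$), so the fix is local -- but it is indispensable, since as written the central inequality of your proof is false.
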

\begin{proof}
If we assume that $\eps$ is so small that
$$
\frac{3}{4}\leq 1-\eps a \Ckappa \leq h \leq 1+\eps a \Ckappa \leq \frac{5}{4}
\,,
$$
then the same relation holds for $h_{\eps}$
and using~\eqref{omezenostk} we easily get
$$
  \left|\frac{1}{2}\intom\frac{1}{\heps^2}(k_1\keps_1+k_2\keps_2)|\psi|^2\,ds\,dt
  - \frac{3}{4}\intom\frac{h}{\heps^3}\left((\keps_1)^2+(\keps_2)^2\right)
  |\psi|^2\,ds\,dt\right|\leq 5 \Ckappa^2\|\psi\|_{\eps}^2.
$$
The estimate on terms proportional to $\eps^{-2}$
is based on the Poincar\'e-type inequality
$$
  \int_{\omega} |\nabla'\phi|^2 dt - E_1\int_{\omega} |\phi|^2 dt \geq 0
$$
that holds for all $\phi\in W^{1,2}_0(\omega)$,
and on Fubini's theorem.
Due to nontrivial measure in our integrals,
we have to use substitution $\phi:=\sqrt{h/\heps}\;\!\psi$,
then we obtain
\begin{eqnarray*}
\lefteqn{
\frac{1}{\eps^2}\intom \frac{h}{\heps}|\nabla'\psi|^2\,ds\,dt
- \frac{E_1}{\eps^2}\intom\frac{h}{\heps}|\psi|^2\,ds\,dt}
\\
&&=\frac{1}{\eps^2}\intom\left(|\nabla'\phi|^2
- E_1|\phi|^2\right)\,ds\,dt
\\
&&\phantom{=} +\intom\left(\frac{3(k_1^2+k_2^2)}{4h^2}
-\frac{k_1\keps_1+k_2\keps_2}{2h\heps}
-\frac{(\keps_1)^2+(\keps_2)^2}{4\heps^2}\right)|\phi|^2\,ds\,dt
\\
&&\geq - 3 \Ckappa^2 \intom|\phi|^2\,ds\,dt
=  - 3 \Ckappa^2 \|\psi\|^2_{\eps}.
\end{eqnarray*}

The last term in (\ref{qeps}) can be estimated using
the Schwarz inequality and the simple Young's inequality ($2ab\leq a^2+b^2$)
\begin{multline}\label{odhad1}
\left|\intom\frac{\der\heps}{h\heps^2}\,\Re(\bar{\psi}\der\psi)\,ds\,dt\right|
\\
\nonumber \leq \frac{1}{2}
\left[\intom\frac{\left(\der\heps\right)^2}{4h\heps^3}|\psi|^2\,ds\,dt
+ \intom\frac{1}{hh_{\eps}}
\left|\der\psi\right|^2 \,ds\,dt\right].
\end{multline}
Summing up, we get
$$
Q_{\eps}[\psi] \geq \frac{1}{2}\intom\frac{1}{hh_{\eps}}\left|\der\psi\right|^2 \,ds\,dt - 8 \Ckappa^2 \|\psi\|^2_{\eps} - \intom\frac{\left(\der\heps\right)^2}{4h\heps^3} |\psi|^2\,ds\,dt
$$
and the proof is finished by the estimate
$$
\left|\intom\frac{\left(\der\heps\right)^2}{4h\heps^3}|\psi|^2\,ds\,dt \right|\leq 51 \frac{\eps^2}{\delta(\eps)^2} a^2 \Ckappa^2 \|\psi\|^2_{\eps} \leq \Ckappa^2 \|\psi\|^2_{\eps}
$$
which holds for small enough $\eps$ due to~\eqref{deltaknule}.
\end{proof}

As a consequence of Lemma~\ref{lemmafiner}, we get that
%
%\begin{equation}\label{boundqeps}
%Q_{\eps}[\psi] \geq - 9 \Ckappa^2 \|\psi\|^2_{\eps}
%\end{equation}
%
%then yields that
for a constant $\lambda < - 9 \Ckappa$
the operator $\Heps-\lambda$ is invertible and it holds
\begin{equation}\label{omezinvheps}
\|(H_{\eps}-\lambda)^{-1}\|_{\B(\Heps)}\leq\frac{1}{|\lambda|-9 \Ckappa^2}
\,.
\end{equation}
\subsection{Convergence properties of the Steklov approximation}\label{secsteklov}
Let~$f$ be a bounded function defined on an interval~$I$
and let $f^{\eps}$ be its Steklov approximation
\begin{equation}\label{steklovf}
f^{\eps}(s):=\frac{1}{\delta(\eps)}
\int_{s-\frac{\delta(\eps)}{2}}^{s+\frac{\delta(\eps)}{2}}f(\xi)\,d\xi
\end{equation}
where~$\delta$ is a positive continuous function on~$\R$ satisfying
the first of the requirements of~\eqref{deltaknule}.
Again, we recall the extension convention of Assumption~\ref{ass2}
if~$I$ is finite or semi-infinite.

In the computations below we require for $f=k_1, k_2, \dot{\theta}$
that in a certain sense $f^{\eps}$ converges to $f$
in the limit $\eps\to 0$. Namely, the crucial requirement reads
\begin{equation}\label{intf}
\left(\int_I\left|f-f^{\eps}\right|^2 |\varphi|^2 ds\right)^{1/2}
\xrightarrow{\eps\rightarrow 0} 0
\,, \qquad \forall \varphi\in W^{1,2}_0(I).
\end{equation}
In the following we will find the requirements on~$f$
such that this condition is satisfied.

We shall start with the estimate on the left hand side of~\eqref{intf}.
\begin{lemma}\label{lemmarozklad}
Let $f\in L^{\infty}(I)$ and let $f^{\eps}$ be the Steklov approximation of $f$.
Let $\varphi\in W^{1,2}(I)$ and finally let $\{a_n\}_{n=n_{-}}^{n_{+}}\subset I$,
$n\in \Z$ be the strictly increasing sequence of numbers
where $a_{n_-} = \inf_{s\in I} s$, $a_{n_+} = \sup_{s\in I} s$,
all the intervals $I_n:=(a_n,a_{n+1})$
are finite and $n_{\pm}$ can be either finite number or $\pm\infty$.
Then
\begin{equation}\label{rozkladint}
\int_I\left|f-f^{\eps}\right|^2 |\varphi|^2 ds\leq \sup_{n_-\leq n\leq n_+} \left[\left(\frac{\|f-f^{\eps}\|^2_{L^2(I_n)}}{a_{n+1}-a_n}\right) + 2 \|f-f^{\eps}\|^2_{L^2(I_n)}\right]\|\varphi\|^2_{W^{1,2}(I)}.
\end{equation}
\end{lemma}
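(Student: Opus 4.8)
The plan is to decompose the integral over $I$ into the sum over the intervals $I_n$, bound each piece by a constant (depending on $n$) times $\|\varphi\|_{W^{1,2}(I_n)}^2$, and then take the supremum over $n$ so that the sum of the $W^{1,2}$-norms collapses to $\|\varphi\|_{W^{1,2}(I)}^2$. So first I would write
\[
  \int_I |f-f^\eps|^2 |\varphi|^2\,ds
  = \sum_{n_-\le n\le n_+} \int_{I_n} |f-f^\eps|^2 |\varphi|^2\,ds
  \le \sum_n \Big(\sup_{I_n}|\varphi|^2\Big)\,\|f-f^\eps\|_{L^2(I_n)}^2 .
\]
The work is therefore entirely in controlling $\sup_{I_n}|\varphi|$ on a \emph{finite} interval by the $W^{1,2}(I_n)$-norm, with an explicit constant, since that is where the two terms in the bracket of~\eqref{rozkladint} — the one divided by $a_{n+1}-a_n$ and the bare factor $2$ — come from.

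The key step is the one-dimensional Sobolev-type estimate on a finite interval $I_n=(a_n,a_{n+1})$ of length $\ell_n:=a_{n+1}-a_n$: for $\varphi\in W^{1,2}(I_n)$ and any $s\in I_n$,
\[
  |\varphi(s)|^2 \le \frac{1}{\ell_n}\int_{I_n}|\varphi|^2 + 2\int_{I_n}|\varphi|\,|\varphi'| .
\]
This follows from the fundamental theorem of calculus: picking a point $s_0\in I_n$ with $|\varphi(s_0)|^2 \le \ell_n^{-1}\int_{I_n}|\varphi|^2$ (such a point exists by the mean value theorem for integrals), one writes $|\varphi(s)|^2 - |\varphi(s_0)|^2 = \int_{s_0}^{s} (|\varphi|^2)' = \int_{s_0}^s 2\,\Re(\bar\varphi\varphi')$ and bounds the last integral by $2\int_{I_n}|\varphi|\,|\varphi'|$. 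Applying Young's inequality $2|\varphi|\,|\varphi'|\le |\varphi|^2 + |\varphi'|^2$ gives $2\int_{I_n}|\varphi||\varphi'| \le \|\varphi\|_{W^{1,2}(I_n)}^2$, hence
\[
  \sup_{I_n}|\varphi|^2 \le \frac{1}{\ell_n}\int_{I_n}|\varphi|^2 + \|\varphi\|_{W^{1,2}(I_n)}^2
  \le \Big(\frac{1}{\ell_n}+1\Big)\|\varphi\|_{W^{1,2}(I_n)}^2 .
\]
(One can equally keep the $\ell_n^{-1}$ factor separate, as in the statement, rather than absorbing it.)

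Finally I would assemble: substituting the above into the sum,
\[
  \int_I |f-f^\eps|^2|\varphi|^2
  \le \sum_n \Big[\frac{\|f-f^\eps\|_{L^2(I_n)}^2}{\ell_n} + \|f-f^\eps\|_{L^2(I_n)}^2\Big]\|\varphi\|_{W^{1,2}(I_n)}^2 ,
\]
then bound the bracket by its supremum over $n$, pull it out, and use $\sum_n \|\varphi\|_{W^{1,2}(I_n)}^2 = \|\varphi\|_{W^{1,2}(I)}^2$ (the $I_n$ partition $I$ up to the countable set $\{a_n\}$). The factor $2$ in~\eqref{rozkladint} rather than $1$ is just the harmless overestimate from handling the cross term $2\int|\varphi||\varphi'|$. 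I expect no real obstacle here — the only mild point of care is that $n_\pm$ may be infinite and $I$ may be unbounded, so one must make sure the partition $\{I_n\}$ genuinely covers $I$ (with all $I_n$ finite) and that interchanging sum and integral is legitimate, which it is by Tonelli since everything is non-negative; the estimate is also uniform in $\eps$ because $\|f-f^\eps\|_{L^2(I_n)} \le 2\|f\|_{L^\infty}\,\ell_n^{1/2}$ keeps each term finite.
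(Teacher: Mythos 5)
Your proof is correct. The paper's own argument uses the same decomposition into the finite intervals $I_n$ and ends with the same constants, but packages the key step differently: instead of pulling out $\sup_{I_n}|\varphi|^2$ by H\"older and then invoking the Sobolev bound $\sup_{I_n}|\varphi|^2\le(\ell_n^{-1}+1)\,\|\varphi\|^2_{W^{1,2}(I_n)}$ with $\ell_n:=a_{n+1}-a_n$, it integrates by parts against the primitive $g^n_\eps(s)=\int_{a_n}^s|f-f^\eps|^2\chi_{I_n}$, which shifts all the mass of $|f-f^\eps|^2$ onto a single point evaluation $|\varphi(a_{n+1})|^2$ (controlled by the same trace inequality) plus a cross term $\int g^n_\eps\, 2\Re(\bar\varphi\dot\varphi)$ handled by the Schwarz and Young inequalities together with $\sup g^n_\eps=\|f-f^\eps\|^2_{L^2(I_n)}$. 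The two routes are essentially equivalent --- both rest on the one-dimensional embedding $W^{1,2}(I_n)\hookrightarrow L^\infty(I_n)$ with the explicit constant $\ell_n^{-1}+1$ --- and yours is arguably the more transparent one; it also avoids evaluating $\varphi$ at the partition points, and even yields the slightly sharper factor $1$ in place of $2$ in the bracket, so the stated inequality follows a fortiori.
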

\begin{proof}
The main idea of the proof is rewriting the estimated expression as
$$
\int_I|f-f^{\eps}|^2 |\varphi|^2 ds = \sum_{n=n_-}^{n_+-1}\int_{a_n}^{a_{n+1}} \dot{g}_{\eps}^n |\varphi|^2 ds = \sum_{n=n_-}^{n_+-1}\left(\left[{g}_{\eps}^n|\varphi|^2\right]_{a_n}^{a_{n+1}} - \int_{a_n}^{a_{n+1}}g_{\eps}^n\, 2\Re\left(\bar{\varphi}\dot{\varphi}\right) ds\right)
$$
where we integrated by parts and where we defined, for all $s\in I$,
$$g_{\eps}^n(s):=\int_{a_n}^s|f(\xi)-f^{\eps}(\xi)|^2\chi_n(\xi)d\xi,$$
$\chi_n(\xi)$ is the characteristic function of the interval $I_n$
(\ie, $\chi_n(\xi)=1$ for $\xi\in[a_n,a_{n+1}]$
and $\chi_n(\xi)=0$ elsewhere).
The proof is then completed by using the Schwarz and Young inequalities
and the relations
\begin{align*}
\sup_{s\in I} g_{\eps}^n(s) &= g_{\eps}^n(a_{n+1})
=  \int_{a_n}^{a_{n+1}}|f(\xi)-f^{\eps}(\xi)|^2 d\xi
= \|f-f^{\eps}\|^2_{L^2(I_n)},\\
|\varphi(a_{n+1})|^2
&\leq \left(\frac{1}{a_{n+1}-a_n}+1\right)\|\varphi\|_{W^{1,2}(I_n)}.
\end{align*}
\end{proof}

The lemma is of great importance for our computations.
From the generalized Minkowski inequality it follows that
\begin{equation}\label{minkowski}
\|f-f^{\eps}\|_{L^2(I_n)}
\leq \sup_{|\eta|\leq \frac{\delta(\eps)}{2}}
\left(\int_{I_n}\left|f(s)-f(s+\eta)\right|^2 ds\right)^{1/2}
=: \omega_2(\delta(\eps), f, I_n).
\end{equation}
Here the notational symbol~$\omega_p$ is adopted from \cite{Akhiezer},
where it is referred to as
\emph{modulus of continuity generalized to space $L^p$}
and is computed for a function $f$,
positive number $\delta(\eps)$ and interval $I_n$.
In~\cite{Akhiezer} it is also shown
that this quantity tends to zero with $\delta(\eps)$
if the interval $I_n$ is finite.
\footnote{More precisely, the proof in~\cite{Akhiezer}
is made for $\omega_2(\delta(\eps), f, \R)=:\omega_2(\delta(\eps), f)$
where $f\in L^2(\R)$.
Here we consider a bounded function defined on finite~$I_n$
and prolonged on $\R\setminus I_n$ in such a way that
the extension is an $L^2$-function.
Then the proof from~\cite{Akhiezer} can be used.}
This directly yields that if the interval $I$ is finite,
the convergence~\eqref{intf} holds.

If $I$ is infinite, we can cut it into finite intervals $(a_n,a_{n+1})$
and for each~$n$ the expression in square brackets
on the right hand side of~\eqref{rozkladint}
would tend to zero when $\eps\rightarrow 0$.
Unfortunately, the supremum over $n_-\leq n\leq n_+$
might not have the zero limit.
On the other hand, this may happen only in a case of functions
that oscillate quickly at infinity (see Example~\ref{excounter}).
In other words, also for unbounded~$I$, the condition~\eqref{intf}
is satisfied for functions that behave `reasonably' at infinity.

We summarize the above ideas in the following proposition.
For simplicity, we use the result of Lemma~\ref{lemmarozklad}
for the equidistant division $a_n := n$ and we recall the extension convention
of Assumption~\ref{ass2} for finite or semi-infinite~$I$.
\begin{proposition}\label{theorconv}
Let $\varphi\in W^{1,2}_0(I)$, $f\in L^{\infty}(I)$
and let $f^{\eps}$ be the Steklov approximation of~$f$
given by~\eqref{steklovf}. Then
\begin{equation}
\label{nadruhou}
\left(\int_I\left|f-f^{\eps}\right|^2 |\varphi|^2 ds \right)^{1/2}
\leq \sqrt{3} \ \sigma_f(\delta(\eps)) \, \|\varphi\|_{W^{1,2}(I)}
\end{equation}
where $\sigma_f(\delta(\eps))$ is given by~\eqref{sigmaf}.
Furthermore, the quantity $\sigma_f(\delta(\eps))$
tends to zero as $\eps\to 0$ if any of the following
conditions is satisfied:
\begin{itemize}
\item[\emph{(i)}] $I$ is finite,
\item[\emph{(ii)}] $f$ periodic on $I_\mathrm{ext}$,
\item[\emph{(iii)}]
$f\in L^2(I_\mathrm{ext})$,
\item[\emph{(iv)}]
$f\in C^0(\overline{I_\mathrm{ext}})$,
\end{itemize}
where $I_\mathrm{ext} := I\setminus \overline{I_\mathrm{int}}$
for some finite open (possibly empty) interval~$I_\mathrm{int}$.
\end{proposition}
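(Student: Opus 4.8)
The plan is to prove the inequality \eqref{nadruhou} first and then treat the four sufficient conditions one by one. For the inequality, I would combine Lemma~\ref{lemmarozklad} (applied with the equidistant division $a_n:=n$, so that $a_{n+1}-a_n=1$) with the Minkowski-type bound~\eqref{minkowski}. Concretely, with $a_n=n$ the bracketed term on the right-hand side of~\eqref{rozkladint} becomes $\|f-f^\eps\|_{L^2(I_n)}^2 + 2\|f-f^\eps\|_{L^2(I_n)}^2 = 3\|f-f^\eps\|_{L^2(I_n)}^2$, and by~\eqref{minkowski} each $\|f-f^\eps\|_{L^2(I_n)}^2$ is bounded by $\omega_2(\delta(\eps),f,I_n)^2 = \sup_{|\eta|\le \delta(\eps)/2}\int_n^{n+1}|f(s)-f(s+\eta)|^2\,ds$. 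Taking the supremum over $n\in\Z$ and a square root turns this into $\sqrt 3\,\sigma_f(\delta(\eps))$, exactly as in~\eqref{sigmaf}. One has to be slightly careful that $\|\varphi\|_{W^{1,2}(I)}$ on the right of~\eqref{rozkladint} is what appears, which is fine since $\varphi\in W_0^{1,2}(I)\subset W^{1,2}(I)$; the compact-support (or zero-boundary) hypothesis on $\varphi$ is what legitimizes the integration by parts in the proof of Lemma~\ref{lemmarozklad} at the endpoints of $I$, so no boundary contributions survive.

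For the limit statements, the core estimate is the single-cell bound $\sup_{|\eta|\le \delta/2}\int_n^{n+1}|f(s)-f(s+\eta)|^2\,ds \le \omega_2(\delta,f,(n,n+1))^2$, which goes to $0$ as $\delta\to0$ for each fixed finite interval by the classical $L^2$-continuity-of-translation result quoted from~\cite{Akhiezer} (with the prolongation trick described in the footnote). Case (i), $I$ finite: then only finitely many cells meet $I$, so the supremum over $n$ is a maximum of finitely many quantities each tending to $0$; done. Case (iii), $f\in L^2(I_\mathrm{ext})$: here I would use a global (not cell-by-cell) argument — for $f\in L^2(\R)$, $\|f-f(\cdot+\eta)\|_{L^2(\R)}\to0$ uniformly for $|\eta|\le\delta/2$ as $\delta\to0$, and since $\int_n^{n+1}|f(s)-f(s+\eta)|^2\,ds \le \|f-f(\cdot+\eta)\|_{L^2(\R)}^2$ uniformly in $n$, the supremum over $n$ is controlled by the global $L^2$-modulus, which vanishes; one splits off the finite interval $I_\mathrm{int}$ trivially (it contributes finitely many cells, handled as in (i)). Case (ii), $f$ periodic on $I_\mathrm{ext}$: periodicity means the cell-wise moduli $\omega_2(\delta,f,(n,n+1))$ take only finitely many distinct values (one period's worth of cells, up to translation — or at worst boundedly many if the period is irrational relative to $1$, in which case I would instead work with cells of length equal to the period), so the supremum is again a maximum of finitely many terms, each $\to0$; the finite piece $I_\mathrm{int}$ is absorbed as before. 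Case (iv), $f\in C^0(\overline{I_\mathrm{ext}})$: here uniform continuity of $f$ on the closure gives $\sup_s|f(s)-f(s+\eta)|\to0$ as $\eta\to0$, hence $\sup_n\int_n^{n+1}|f(s)-f(s+\eta)|^2\,ds \le \bigl(\sup_s|f(s)-f(s+\eta)|\bigr)^2\to0$ uniformly in $n$.

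I expect the main obstacle to be case (ii), the periodic case, where one must be honest about the interaction between the period of $f$ and the fixed unit cell length chosen in the division $a_n=n$: if the period is not commensurable with $1$, the cell-wise moduli need not literally repeat. The clean fix is to invoke the freedom in Lemma~\ref{lemmarozklad} to choose the division $\{a_n\}$ adapted to the problem — take $a_n = c + nL$ where $L$ is the period of $f$ — after which the moduli $\omega_2(\delta,f,I_n)$ are genuinely independent of $n$ (a single translate of $f$ over one period), and the supremum collapses to that one quantity, which $\to0$ by the finite-interval result applied on $I_1$; the constant $\sqrt 3$ gets replaced by a harmless $L$-dependent constant, which is irrelevant for the convergence conclusion. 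The remaining cases are routine applications of the standard density/continuity facts about translation in $L^2$ and of uniform continuity, together with the observation that the $I_\mathrm{int}$ part always contributes only boundedly many cells and therefore never obstructs the limit.
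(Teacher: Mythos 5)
Your derivation of \eqref{nadruhou} and your treatment of cases (i)--(iii) follow essentially the same route as the paper: Lemma~\ref{lemmarozklad} with the equidistant division $a_n=n$ combined with \eqref{minkowski} gives exactly the factor $\sqrt{3}$, case (i) is a maximum of finitely many cell-wise moduli, case (iii) rests on the $L^2$-continuity of translation (your global bound $\int_n^{n+1}|f(s)-f(s+\eta)|^2\,ds\le\|f-f(\cdot+\eta)\|_{L^2(\R)}^2$ is a clean alternative to the paper's truncation of $f$ outside a large finite interval, and both work), and for case (ii) your fix via the adapted division $a_n=nL$ is precisely the parenthetical argument in the paper.

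There is, however, a genuine gap in your case (iv). The quantity $\sigma_f(\delta(\eps))$ in \eqref{sigmaf} is computed for the extension of $f$ by zero to all of $\R$, and the supremum runs over \emph{all} cells $(n,n+1)$, $n\in\Z$. If $I$ is semi-infinite (or $I_\mathrm{int}$ is nonempty), this extension has jump discontinuities at the finite endpoints of $I$ and at $\partial I_\mathrm{int}$, so your claimed bound
\begin{equation*}
\sup_{n}\int_n^{n+1}|f(s)-f(s+\eta)|^2\,ds
\ \leq\ \Bigl(\sup_{s}|f(s)-f(s+\eta)|\Bigr)^2\ \longrightarrow\ 0
\end{equation*}
fails: for $f\equiv 1$ on $I=(0,\infty)$ extended by zero, the right-hand side equals $1$ for every $\eta\neq 0$, while the left-hand side does tend to zero. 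Uniform continuity on $\overline{I_\mathrm{ext}}$ controls only pairs $s,s+\eta$ that both lie in $I_\mathrm{ext}$; it says nothing about the $O(\delta(\eps))$-measure set of $s$ for which $s$ and $s+\eta$ straddle a discontinuity of the extension. The repair is the one the paper makes explicit: split each of the finitely many offending cells into the $\delta(\eps)$-neighbourhood of the discontinuity point, whose contribution is bounded by $C\,\|f\|_\infty^2\,\delta(\eps)\to 0$, and the remainder, where uniform continuity (or, for the cells meeting $I_\mathrm{int}$, the finite-interval argument of case (i)) applies. Your write-up omits this step, and also does not say how the finitely many cells meeting $\overline{I_\mathrm{int}}$ are handled in case (iv), although the same device covers them.
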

\begin{proof}
The inequality~\eqref{nadruhou} follows from Lemma~\ref{lemmarozklad}
and the relation~\eqref{minkowski}.
It reminds to prove the convergence properties of~\eqref{sigmaf}.

Any bounded interval $I$ can be covered by intervals $I_n = (n,n+1]$,
$n\in \mathcal{I}$ with $\mathcal{I}\subset\Z$ finite.
Then $\sigma_f(\delta(\eps))$ is proportional to
\emph{maximum} of $\omega_2(\delta(\eps), f, I_n)$ over $n\in\mathcal{I}$.
Since $\omega_2(\delta(\eps), f, I_n)$ converges to zero
for every $n\in\mathcal{I}$ as we explained above,
$\sigma_f(\delta(\eps))$ converges to zero if interval $I$ is finite.
For the same reason the convergence of $\sigma_f(\delta(\eps))$
does not depend on the behaviour of~$f$ on a bounded subinterval
$I_\mathrm{int}\subset I$ in case of infinite or semi-infinite~$I$ .

Also the convergence of $\sigma_f(\delta(\eps))$ for periodic functions
follows from the fact that the period of length $q<\infty$
can be covered by finite number of intervals $(n,n+1]$.
(More straightforwardly, using the sequence $a_n = nq$
in Lemma~\ref{lemmarozklad},
we get $\sigma_f(\delta(\eps)) \propto \omega_2(\delta(\eps), f, (0,q))$,
which tends to zero as $\eps \to 0$.)

In the case of $L^2$-functions,
we use the fact that for any $\eps>0$ there exists
a finite interval $I_\mathrm{int}$ such that $f(s)< \eps$
for all $s\in \R\setminus I_\mathrm{int}$.
Then again the significant contribution to $\sigma_f(\delta(\eps))$
comes from the finite interval.

Finally, for the uniformly continuous functions
the situation is even more simple. Here already the quantity
$$
  \omega_\infty(\delta(\eps),f)
  := \sup_{|\xi_1-\xi_2|\leq \delta(\eps)}
  \left|f(\xi_1)-f(\xi_2)\right|\qquad \xi_1,\xi_2\in I_\mathrm{ext},
$$
called the \emph{modulus of continuity} in~\cite{Akhiezer},
tends to zero when $\delta(\eps)$ tends to zero
and $\sigma_f(\delta(\eps))$ can be estimated
by $\omega_\infty(\delta(\eps),f)$.
Let us note that if $I$ is bounded or semi-bounded,
the function need not to be uniformly continuous after
extension by zero outside $I$.
However, the convergence can be still ensured by dividing $I$
on the $\delta(\eps)$-neighbourhood of the end point(s) and the rest of $I$.
Then we can estimate the integral over inner part
by $\omega_\infty(\delta(\eps),f)$ since here the function
is indeed uniformly continuous, the remaining integral
can be estimated by $\|f\|_{\infty}\delta(\eps)$ which tends to zero as well.
\end{proof}
%

%--------------------------------------%
\section{The norm-resolvent convergence}\label{Sec.proof}
%--------------------------------------%
%
In this long and technically demanding section
we give a proof of Theorem~\ref{Thm.main.intro}.

\subsection{Comparing operators acting on different
Hilbert spaces}\label{seccompare}
We start with describing a way
how to understand the resolvent convergence of operators
$-\Delta_D^{\Omega_\eps}$ and~$H_\mathrm{eff}$ acting on different
Hilbert spaces $L^2(\Omega_\eps)$ and $L^2(I)$, respectively.

First of all, we recall that, in Section~\ref{secsmooth},
we introduced the operator~$H_{\eps}$ on $\H_{\eps}$
which is unitarily equivalent to
$-\Delta_D^{\Omega_\eps}-\eps^{-2}E_1$.
It is therefore enough to explain the resolvent convergence
of $H_{\eps}$ and~$H_\mathrm{eff}$.
Our strategy is to reconsider these operators
as certain operators on the fixed Hilbert space
$$
  \H_0 := L^2(\Omega_0)
$$
and to show that the error due to the replacement
becomes negligible in the limit as $\eps \to 0$.

Recall that we have denoted the norm and inner product
in the $\eps$-dependent Hilbert space~$\H_{\eps}$
by $\|\cdot\|_{\eps}$ and $(\cdot,\cdot)_{\eps}$, respectively.
We simply write $\|\cdot\|$ and $(\cdot,\cdot)$
for the norm and inner product in~$\H_0$.
Finally, $\|\cdot\|_I$ and $(\cdot,\cdot)_I$
stand for the norm and inner product in $L^2(I)$.

In order to have a way to compare operators
acting on~$\H_{\eps}$ and~$\H_0$,
let us introduce yet another unitary transform
\begin{equation}\label{defueps}
\Ueps : \H_{\eps} \to \H_0 :
  \left\{ \psi \mapsto \frac{|G|^{1/4}}{|\tilde{G}|^{1/4}} \psi \right\}
  \,.
\end{equation}
For the convenience of the reader, we present here the following
diagram explaining the relation with the other unitary transforms
introduced so far:
\begin{equation}\label{diagram}
  L^2(\Omega_\eps) \xrightarrow[]{\quad U_1 \quad}
\begin{aligned}
\xymatrix{
   & \H_0
   \\
   \tilde{\H}_\eps \ar[ur]^{U_2} \ar[dr]_{\tilde{U}_2} &
   \\
   & \H_\eps \ar[uu]_{U_\eps}
}
\end{aligned}
\end{equation}

It is important to emphasize that while the transformed resolvent
$\Ueps(\Heps - i)^{-1}\Ueps^{-1}$ on~$\H_0$ is well defined
(as a unitary transform of a bounded operator),
the similar expression for the (unbounded) operator~$H_\eps$
may not have any sense. Indeed, $\Heps$~acts as a differential operator,
while~$|G|$ may not be differentiable under our minimal assumption.
The same remark applies to~$\tilde{H}_\eps$ and~$U_2$,
as already pointed out in Section~\ref{Sec.strategy.intro}.

Summing up, using the unitary transforms described above,
it is possible to reconsider the resolvent of
the Dirichlet Laplacian $-\Delta_D^{\Omega_\eps}$
as an operator on~$\H_0$. It remains to explain
how to reconsider~$H_\mathrm{eff}$ acting on~$L^2(I)$
as an operator on the `larger' space~$\H_0$.
This is done by introducing the following subspace of~$\H_0$:
\begin{equation}\label{subspaceH0}
\H_0^1:=\left\{\psi\in\H_0\,\left|\right.\,\exists\varphi\in L^2(I),
\,\psi(s,t) = \varphi(s)\mathcal{J}_1(t) \right\}.
\end{equation}
Recall that~$\mathcal{J}_1$ denotes the eigenfunction corresponding
to first eigenvalue of the transverse Dirichlet Laplacian $-\Delta_D^\omega$;
we choose it to be positive and normalized to one in $L^2(\omega)$.
$\H_0^1$ is closed, hence
\begin{equation}\label{hildecomp}
\H_0=\H_0^1 \oplus (\H_0^1)^{\bot}
\end{equation}
and every function $\psi\in \H_0$ can be uniquely written as
\begin{equation}\label{defdecomp}
\psi=P_1\psi + (1-P_1)\psi =: \psi_1\mathcal{J}_1 +\psi^{\bot}
\end{equation}
with $\psi_1\mathcal{J}_1\in \H_0^1$, $\psi^{\bot}\in (\H_0^1)^{\bot}$ and $P_1$ being projection on $\H_0^1$,
\begin{equation}\label{P1}
(P_1\psi)(s,t)
:=\left(\int_{\omega}\mathcal{J}_1(t)\psi(s,t) dt\right)\mathcal{J}_1(t)
\equiv \psi_1(s)\mathcal{J}_1(t).
\end{equation}
To shorten the notation, we denote by $\psi_1\mathcal{J}_1$
the function $\psi_1\otimes\mathcal{J}_1$,
\ie\ the function on $I\times\omega$ which assumes values
as $\psi_1(s)\mathcal{J}_1(t)$.
Such a decomposition of functions $\psi\in\H_0$ will be extensively used
throughout the text with the same notation.

Now we can introduce the isometric isomorphism
$$
  \pi: \H_0^1 \to L^2(I) :
  \left\{
  \psi_1(s)\mathcal{J}_1(t) \mapsto \psi_1(s)
  \right\}
  \,.
$$
Let $\qeff$ be the quadratic form associated with the operator $\Heff$, \ie,
$$
  \qeff[\varphi]=\int_I |\dot\varphi(s)|^2 ds
  +C_\omega\int_I \dot{\theta}(s)^2 |\varphi(s)|^2 ds
  - \frac{1}{4}\int_I\kappa(s)^2|\varphi(s)|^2 ds
  \,, \qquad
  \Dom(\qeff) = W_0^{1,2}(I)
  \,.
$$
(Recall that the basic Assumption~\ref{assumgamma} requires
that both~$\kappa$ and~$\dot\theta$ are bounded functions,
so that~$\Heff$ is well defined as a bounded perturbation
of the one-dimensional Dirichlet Laplacian $-\Delta_D^I$.)
The form~$\qeff$ can be identified with the quadratic form~$\Qeff$
acting on the subspace~$\H_0^1$ as
\begin{equation}\label{defQeff}
\begin{aligned}
\Qeff[\psi_1\mathcal{J}_1]
&:=  \intom |\partial_s\psi_1\mathcal{J}_1|^2\,ds\,dt
-\frac{1}{4} \intom \kappa^2|\psi_1\mathcal{J}_1|^2\,ds\,dt
+ C_\omega\intom \dot{\theta}^2 |\psi_1\mathcal{J}_1|^2\,ds\,dt
= \qeff[\psi_1]
%= \qeff[\pi(\psi_1\mathcal{J}_1)]
\,,
\\
\Dom(\Qeff) &:= \{\psi\in\H_0^1 \, | \, \psi_1 \in W_0^{1,2}(I)\}
\,.
\end{aligned}
\end{equation}
In a similar way we can identify operators acting
on $\H_0^1 \subset \H_0$ and $L^2(I)$.
We tacitly employ the identification,
without writing down the identification mapping~$\pi$ explicitly
in the formulae.
In particular, denoting by~$0^{\bot}$
the zero operator on $(\H_0^1)^{\bot}$,
the operator $\left(\Heff-i\right)^{-1}\oplus 0^{\bot}$
can be understood as an operator acting on the whole space~$\H_0$.

\subsection{Proof of Theorem~\ref{Thm.main.intro}}\label{secproofth}
At first let us explain the connection between the operator $U(-\Delta_D^{\Omega_\varepsilon} -\varepsilon^{-2} E_1-i)^{-1}U^{-1}$ from formula~\eqref{NR.bound} and the operator $\Ueps(\Heps - i)^{-1}\Ueps^{-1}$ we spoke about in the previous section. More precisely, we show that these two operators are identical.
Indeed, recall that $U=U_2U_1$ where $U_1$ and $U_2$
are unitary transforms described in Section~\ref{Sec.strategy.intro} and in addition that $H_{\eps} = \tilde{U}_2 U_1(-\Delta_D^{\Omega_\varepsilon} -\varepsilon^{-2} E_1) U_1^{-1}\tilde{U}_2^{-1}$. Using the diagram~\eqref{diagram} we easily get that
$$U_2U_1(-\Delta_D^{\Omega_\varepsilon} -\varepsilon^{-2} E_1-i)^{-1}U_1^{-1}U_2^{-1} = U_{\eps}\tilde{U}_2U_1(-\Delta_D^{\Omega_\varepsilon} -\varepsilon^{-2} E_1-i)^{-1}U_1^{-1}\tilde{U}_2^{-1}U_{\eps}^{-1} = \Ueps(\Heps-i)^{-1}\Ueps^{-1}$$
and in following we will prove Theorem~\ref{Thm.main.intro} using the last expression.

Another point is that according to \cite{kato} (Theorem IV.2.25), if the formula~\eqref{NR.bound} is satisfied for a $\lambda$ from the resolvent set of $\Heff$, then it holds true for all such $\lambda$. In particular there exists a constant $C_{\lambda}$ such that
\begin{multline}\label{estlambda}
\left\|\Ueps(\Heps-i)^{-1}\Ueps^{-1}
- \left(\left(\Heff-i\right)^{-1}\oplus 0^{\bot}\right) \right\|_{\B(\H_0)}
\\
\leq C_{\lambda} \left\|\Ueps(\Heps-\lambda)^{-1}\Ueps^{-1}
- \left(\left(\Heff-\lambda \right)^{-1}\oplus 0^{\bot}\right) \right\|_{\B(\H_0)}.
\end{multline}
Hence our aim is to prove that the right hand side
of the last expression tends to zero for some
$\lambda<-9\Ckappa^2$, since such $\lambda$
belongs to resolvent set of $\Heff$
and also of $\Heps$ (\cf~\eqref{qepsgeq}
which yields $Q_{\eps}[\psi] \geq - 9 \Ckappa^2 \|\psi\|^2_{\eps}$,
similarly $\qeff[\psi] \geq -\frac{1}{4}\Ckappa^2\|\psi\|_{I}^2$).
This proof is divided into proof of two auxiliary lemmata,
where in every lemma we compare one of the operators
on the right hand side of~\eqref{estlambda} with the resolvent of operator
\begin{equation}\label{defh0}
H_0:=1\otimes\left(-\frac{1}{\eps^2}\Delta^{\omega}_D - \frac{E_1}{\eps^2}\right) + \left(-\Delta^I_D - \frac{\kappa^2}{4} + C_\omega \dot{\theta}^2\right)\otimes1.
\end{equation}
The crucial step lies in comparison of $H_0$ and $\Heps$ stated in the first of these lemmata.
\begin{lemma}\label{lemma1}
Let $\lambda<-9 \Ckappa^2$ be a real constant and let the assumptions of Theorem~\ref{Thm.main.intro} be satisfied. Then
\begin{equation}\label{normresolv}
\|U_{\eps}(H_{\eps}-\lambda)^{-1}\Ueps^{-1} - (H_0-\lambda)^{-1}\|_{\B(\H_0)}\leq \tilde{C}\left(\eps + \eps \left(\|\dot{k}^{\eps}_1\|_{\infty} + \|\dot{k}^{\eps}_2\|_{\infty}\right) + \sigma(\delta(\eps))\right)
\end{equation}
for some constant $\tilde{C}$.
\end{lemma}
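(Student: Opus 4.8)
The strategy is the standard resolvent-difference identity combined with the form estimates prepared in Section~\ref{Sec.Steklov}. Write $R_\eps := U_\eps(H_\eps-\lambda)^{-1}U_\eps^{-1}$ and $R_0 := (H_0-\lambda)^{-1}$, both bounded operators on $\H_0$. Since $\lambda<-9\Ckappa^2$, Lemma~\ref{lemmafiner} (via~\eqref{omezinvheps}) gives a uniform-in-$\eps$ bound on $(H_\eps-\lambda)^{-1}$, and an elementary bound on $(H_0-\lambda)^{-1}$ follows from the Poincar\'e-type inequality in the transverse variables together with boundedness of $\kappa$ and $\dot\theta$. The plan is to estimate, for arbitrary $f,g\in\H_0$, the quantity $\big((R_\eps-R_0)f,g\big)$ by inserting $u:=R_\eps f$, $v:=R_0^{*}g=R_0 g$ (self-adjointness up to the unitary $U_\eps$) and rewriting
\begin{equation*}
  \big((R_\eps-R_0)f,g\big)
  = \big(u, (H_0-\lambda) v\big) - \big((H_\eps-\lambda)\,\text{(pullback of }u),\text{(pullback of }v)\big),
\end{equation*}
which, after passing to the forms, becomes a difference $Q_0[\cdot,\cdot]-Q_\eps[\cdot,\cdot]$ evaluated on $U_\eps^{-1}u$ and $U_\eps^{-1}v$ (with the appropriate measure weights). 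Here $Q_0$ is the form of $H_0$ and $Q_\eps$ is the explicit form~\eqref{qeps}. The point of having introduced $U_\eps$ and the mollified $\heps$ is precisely that this form difference is well defined even though $|G|$ itself is not differentiable.

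First I would record the bound $\|U_\eps^{-1}v - v\| = O(\eps)$ and, more importantly, the form-norm bound $\|U_\eps^{-1}v\|_{Q_\eps}\le C\|g\|$, using~\eqref{qepsgeq} again; the analogous statement for $u$ requires only the resolvent bound. Then I would go through the seven terms of~\eqref{qeps} and compare them, term by term, with the three terms of $Q_0$. The diagonal terms (the $\eps^{-2}$ transverse part and the $-\kappa^2/4$ and $C_\omega\dot\theta^2$ potentials) are handled by writing $h/\heps = 1 + O(\eps)$, $1/(h\heps) = 1+O(\eps)$, and by replacing $\keps_\mu$ by $k_\mu$ at the cost of $\|k_\mu-\keps_\mu\|$ applied against an $H^1$-function in the $s$-variable; this is exactly where Proposition~\ref{theorconv} enters and produces the $\sigma(\delta(\eps))$ contribution. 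The twisting term $C_\omega\dot\theta^2$ likewise needs a mollification of $\dot\theta$, producing the $\sigma_{\dot\theta}(\tilde\delta(\eps))$ piece. The genuinely new terms are the last two in~\eqref{qeps}, involving $\der\heps$; since $\der\heps = -\eps[\,t_1(\dot k_1^\eps\cos\theta+\cdots)+\cdots\,] + O(\eps)$, each carries an explicit factor $\eps$ and hence contributes $\eps\|\dot k_\mu^\eps\|_\infty$ — this is the origin of the middle terms on the right-hand side of~\eqref{normresolv}, and it is the whole reason the Steklov mollification was introduced: without it one would have $\eps\|\dot k_\mu\|_\infty$, which need not be small.

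The main obstacle, and the step I would spend the most care on, is bookkeeping the cross term $\der\heps\,\Re(\bar\psi\der\psi)/(h\heps^2)$: it is first-order in the longitudinal derivative, so one cannot simply bound it by the $L^2$-norm of $\psi$; one must pair $\der\psi$ with the form-norm bound on $u$ (which is uniform) and isolate the small factor $\eps$ from $\der\heps$, while being careful that the remaining factor $\dot k_\mu^\eps$ is allowed to blow up and only the product $\eps\|\dot k_\mu^\eps\|_\infty$ is controlled. A subtlety here is that $\der\theta$ appears in $\der\heps$ as well, so one also needs $\dot\theta\in L^\infty$ (Assumption~\ref{assumgamma}(ii)) to keep that part $O(\eps)$ without further mollification. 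Once every term is estimated by $C(\eps+\eps\sum_\mu\|\dot k_\mu^\eps\|_\infty+\sigma(\delta(\eps)))$ times $\|f\|\,\|g\|$, taking the supremum over $f,g$ in the unit ball of $\H_0$ yields~\eqref{normresolv}. I would also note at the end that all constants are uniform for $\eps\le\eps_0$ because $h,\heps\in[3/4,5/4]$ and $\eps/\delta(\eps)\to0$, exactly as used in the proof of Lemma~\ref{lemmafiner}.
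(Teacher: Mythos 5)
Your overall architecture is the right one and matches the paper's: compare the sesquilinear forms $Q_\eps^\lambda$ and $Q_0^\lambda$ on resolvent images (the Friedlander--Solomyak trick, Proposition~\ref{proptrik}), absorb the change of Hilbert space into $U_\eps=1+O(\eps)$, and estimate \eqref{qeps} against \eqref{defq0} term by term. However, there are two genuine gaps in the term-by-term part. First, the $\eps^{-2}$ transverse block cannot be ``handled by writing $h/\heps=1+O(\eps)$'': the difference is
$\eps^{-2}\intom(h/\heps-1)\,\nabla'\bar\phi\cdot\nabla'\psi - E_1\eps^{-2}\intom(h/\heps-1)\bar\phi\psi$, and since $\eps^{-2}\|\nabla'\psi\|^2$ is only controlled by the form \emph{after} subtracting $E_1\eps^{-2}\|\psi\|^2$ --- a cancellation you cannot invoke here because $h/\heps-1$ depends on $t$ --- the naive bound is $O(\eps)\cdot O(\eps^{-2})=O(\eps^{-1})$. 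The paper's resolution is essential and absent from your proposal: decompose $\psi=\psi_1\mathcal{J}_1+\psi^\bot$ as in \eqref{defdecomp}, integrate by parts in $t$ on the $\psi_1\mathcal{J}_1$ block to exploit $-\Delta'\mathcal{J}_1=E_1\mathcal{J}_1$ (which kills the $\eps^{-2}$), and control the orthogonal pieces by the key bounds $\|\psi^\bot\|\le\eps\;\!C_\bot\sqrt{Q_\eps^\lambda[\psi]}$, $\|\nabla'\psi^\bot\|\le\eps\;\!C_\bot\sqrt{Q_\eps^\lambda[\psi]}$ of \eqref{bot1}--\eqref{bot2}, which rest on the spectral gap $E_2>E_1$. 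Nothing in your sketch produces these $\eps$-gains, and without them the lemma's bound is out of reach.

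Second, you misplace the main obstacle. The cross term $\der\heps\,\Re(\bar\psi\der\psi)/(h\heps^2)$ that you single out is in fact one of the easy terms: Cauchy--Schwarz against $\sqrt{Q_\eps^\lambda[\psi]}$ (which controls $\|\der\psi\|$ by Lemma~\ref{lemmafiner}) together with $\|\der\heps\|_\infty=O(\eps(\|\dot k_1^\eps\|_\infty+\|\dot k_2^\eps\|_\infty))$ settles it, as in \eqref{est1}. The genuinely problematic term arises when you expand $|\der\psi|^2$ along the decomposition: it is
$\intom\dot\theta\,\ader\mathcal{J}_1\,(\psi_1\partial_s\bar\phi^\bot+\bar\phi_1\partial_s\psi^\bot)$, for which there is \emph{no} vanishing control on $\|\partial_s\phi^\bot\|$. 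This --- not the potential $C_\omega\dot\theta^2$ --- is the reason $\dot\theta$ must also be Steklov-mollified: one replaces $\dot\theta$ by $\dot\theta^\eps$ at the cost of $\sigma_{\dot\theta}(\tilde\delta(\eps))$ and then integrates by parts in $s$ to move the derivative off $\phi^\bot$. Your attribution of the $\sigma_{\dot\theta}$ contribution to the diagonal twisting potential would not survive an attempt to actually close the estimate on the first line of \eqref{qepssesq}.
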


Second lemma giving the comparison of $H_0$ and $\Heff$ represents only a tiny improvement of the result above.
\begin{lemma}\label{lemmanormresh0}
Let $H_0$ be the operator defined by~\eqref{defh0} and let $\Heff$ be the effective Hamiltonian~\eqref{defheff}. Then
$$\|\left(H_0-\lambda\right)^{-1} - \left(\left(\Heff-\lambda\right)^{-1}\oplus 0^{\bot}\right) \|_{\B(\H_0)}\leq \tilde{\tilde{C}} \eps$$
for some real constants $\tilde{\tilde{C}}$ and $\lambda< -9 \Ckappa^2$.
\end{lemma}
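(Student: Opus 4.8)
The plan is to prove Lemma~\ref{lemmanormresh0} by a direct comparison of the two resolvents using the second resolvent identity together with the spectral decomposition induced by the orthogonal splitting $\H_0=\H_0^1\oplus(\H_0^1)^\bot$ from~\eqref{hildecomp}. The key observation is that $H_0$ defined in~\eqref{defh0} is a tensor-sum operator that is \emph{block-diagonal} with respect to this decomposition: on the subspace $\H_0^1$ spanned by $\varphi\otimes\mathcal{J}_1$ the transverse part $-\eps^{-2}\Delta_D^\omega-\eps^{-2}E_1$ acts as zero (since $\mathcal{J}_1$ is the ground state with eigenvalue $E_1$), so $H_0$ restricted to $\H_0^1$ is exactly $\Heff$ (under the identification~$\pi$). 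On the orthogonal complement $(\H_0^1)^\bot$, the transverse operator $-\eps^{-2}\Delta_D^\omega-\eps^{-2}E_1$ is bounded below by $\eps^{-2}(E_2-E_1)$, where $E_2>E_1$ is the second eigenvalue of $-\Delta_D^\omega$, which diverges as $\eps\to 0$.

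Concretely, first I would write $H_0-\lambda$ as the direct sum $(\Heff-\lambda)\oplus A_\eps$, where $A_\eps$ acts on $(\H_0^1)^\bot$ and incorporates both the diverging transverse term and the bounded longitudinal perturbation $-\Delta_D^I-\kappa^2/4+C_\omega\dot\theta^2$. Then the resolvent difference becomes
\begin{equation*}
  (H_0-\lambda)^{-1}-\big((\Heff-\lambda)^{-1}\oplus 0^\bot\big)
  = 0\oplus A_\eps^{-1}
  \,,
\end{equation*}
so the claimed bound reduces to showing $\|A_\eps^{-1}\|_{\B((\H_0^1)^\bot)}\leq \tilde{\tilde C}\,\eps$ (in fact $O(\eps^2)$, which certainly suffices). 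For this I would use that on $(\H_0^1)^\bot$ the transverse Laplacian gives $-\eps^{-2}\Delta_D^\omega-\eps^{-2}E_1\geq \eps^{-2}(E_2-E_1)>0$, while the longitudinal part $-\Delta_D^I-\kappa^2/4+C_\omega\dot\theta^2$ is bounded below by $-\tfrac14\Ckappa^2$ thanks to Assumption~\ref{assumgamma} (boundedness of $\kappa$ and $\dot\theta$). Hence $A_\eps\geq \eps^{-2}(E_2-E_1)-\tfrac14\Ckappa^2-\lambda$, which for small enough~$\eps$ and $\lambda<-9\Ckappa^2<0$ is positive and of order $\eps^{-2}$; therefore $\|A_\eps^{-1}\|\leq \big(\eps^{-2}(E_2-E_1)-\tfrac14\Ckappa^2-\lambda\big)^{-1}=O(\eps^2)$, giving the estimate with room to spare.

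The only mild subtlety, and the one step I would be careful about, is justifying that $H_0$ genuinely respects the decomposition $\H_0=\H_0^1\oplus(\H_0^1)^\bot$ at the level of operators (domains included), so that the resolvent splits as a direct sum. This follows from the tensor-product structure in~\eqref{defh0}: since $P_1$ commutes with both $1\otimes(-\eps^{-2}\Delta_D^\omega-\eps^{-2}E_1)$ and with $(-\Delta_D^I-\kappa^2/4+C_\omega\dot\theta^2)\otimes 1$ (the latter acting only in the $s$-variable), it commutes with $H_0$, and hence with its resolvent; standard tensor-product form arguments then give $\Dom(Q_0)=\Dom(Q_0)\cap\H_0^1 \,\oplus\, \Dom(Q_0)\cap(\H_0^1)^\bot$ and identify the $\H_0^1$-block of $H_0$ with $\Heff$ via~\eqref{defQeff}. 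Once this structural point is in place, the norm estimate is immediate from the lower bound on $A_\eps$, so I do not expect any real obstacle here — this lemma is, as the authors remark, only a minor refinement of the comparison with $\Heps$ carried out in Lemma~\ref{lemma1}.
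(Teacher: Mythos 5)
Your proposal is correct, and it takes a genuinely different route from the paper. The paper proves this lemma with the same weak/form-comparison machinery it uses for Lemma~\ref{lemma1}: it picks test vectors $f=H_0^{\lambda}\phi$ and $P_1g=(\Heff^{\lambda}\psi_1)\mathcal{J}_1$, observes via~\eqref{qnula} that the matrix element of the resolvent difference collapses to the single term $(\phi,g^{\bot})$, and then bounds $\|(H_0^\lambda)^{-1}(1-P_1)\|$ by $\eps\, C_\bot\sqrt{\|(H_0^\lambda)^{-1}\|}$ using the transverse-mode estimate $\|\psi^\bot\|\leq\eps C_\bot\sqrt{Q_0^\lambda[\psi]}$ from Section~\ref{proofrel}; this yields the stated $O(\eps)$ rate. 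You instead exploit the exact tensor-sum structure of $H_0$: since $P_1=1\otimes p_1$ with $p_1$ the spectral projection of $-\Delta_D^\omega$ onto its ground state, $P_1$ reduces $H_0$ (and one checks easily that the cross terms of $Q_0$ between $\H_0^1$ and $(\H_0^1)^\bot$ vanish, using $\int_\omega\nabla'\mathcal{J}_1\cdot\nabla'\psi^\bot\,dt=E_1\int_\omega\mathcal{J}_1\psi^\bot\,dt=0$), the $\H_0^1$-block is exactly $\Heff$, and the complementary block is invertible with norm $O(\eps^2)$ by the spectral gap $E_2-E_1>0$. Your argument is more elementary, makes the mechanism (the diverging transverse gap) completely transparent, and even gives the sharper bound $O(\eps^2)$ — though this does not improve Theorem~\ref{Thm.main.intro}, whose rate is dominated by the $O(\eps)$ and $\sigma$-terms of Lemma~\ref{lemma1}. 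The paper's choice of the form method here is presumably for uniformity with Lemma~\ref{lemma1}, where $H_\eps$ has no tensor structure and the direct decomposition is unavailable. Both proofs are complete; the one structural point you flag (that $P_1$ preserves the form domain $W_0^{1,2}(\Omega_0)$ and that the form is block-diagonal) is indeed the only thing to verify, and it holds for the reasons you give.
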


Proofs of Lemmata~\ref{lemma1} and~\ref{lemmanormresh0}
will be given in Sections~\ref{prooflemma1}
and~\ref{prooflemmanorm}, respectively.
These lemmata will be proved using
a trick employed originally in \cite{Friedlander-Solomyak_2007},
where the estimate on the norm of the difference of resolvents
is obtained by a usage of the associated quadratic forms.
We state the following proposition to give the reader basic
idea of this trick; in our proofs we have to modify it
due to distinctness of Hilbert spaces our operators act on
and the main idea could be hidden by loads of technicalities.

\begin{proposition}\label{proptrik}
Let $A, B$ be positive self-adjoint operators acting on Hilbert space $\H$ with $A^{-1},B^{-1}\in\B(\H)$ and let $Q_A$, $Q_B$ be associated sesquilinear forms with $\Dom Q_A = \Dom Q_B$. Let us assume that for all $\phi,\psi \in \Dom Q_A$
\begin{equation}\label{estQ}
\left|Q_A(\phi,\psi)-Q_B(\phi,\psi)\right|\leq \sigma \sqrt{Q_A[\phi]}\sqrt{Q_B[\psi]}.
\end{equation}
Then
$$\|A^{-1}-B^{-1}\|_{\B(\H)}\leq \sigma \sqrt{\|A^{-1}\|_{\B(\H)}}\sqrt{\|B^{-1}\|_{\B(\H)}}.$$
\end{proposition}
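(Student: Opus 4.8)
The plan is to deduce the resolvent bound from the form estimate by a purely functional-analytic argument. First I would exploit the fact that $A$ and $B$ are positive self-adjoint operators with bounded inverses, so that the quadratic forms $Q_A$ and $Q_B$ are nonnegative, closed, and their square roots $A^{1/2}$, $B^{1/2}$ are well defined with $\Dom(A^{1/2}) = \Dom Q_A = \Dom Q_B = \Dom(B^{1/2})$. The key identity to write down is the resolvent-type identity expressing $A^{-1} - B^{-1}$ through the forms: for $f, g \in \H$, put $\phi := A^{-1} f$ and $\psi := B^{-1} g$, which lie in $\Dom Q_A$. Then
\begin{equation*}
  (f, (A^{-1} - B^{-1}) g)_{\H}
  = (A^{-1} f, g)_{\H} - (f, B^{-1} g)_{\H}
  = Q_B(A^{-1} f, B^{-1} g) - Q_A(A^{-1} f, B^{-1} g)
  ,
\end{equation*}
where I used $(A^{-1} f, g) = (A^{-1} f, B B^{-1} g) = Q_B(A^{-1} f, B^{-1} g)$ since $B^{-1} g \in \Dom(B)$, and symmetrically $(f, B^{-1} g) = Q_A(A^{-1} f, B^{-1} g)$ since $A^{-1} f \in \Dom(A)$.

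Next I would apply the hypothesis~\eqref{estQ} to the pair $\phi = A^{-1} f$, $\psi = B^{-1} g$, giving
\begin{equation*}
  \left| (f, (A^{-1} - B^{-1}) g)_{\H} \right|
  \leq \sigma \sqrt{Q_A[A^{-1} f]} \, \sqrt{Q_B[B^{-1} g]}
  .
\end{equation*}
Now $Q_A[A^{-1} f] = (A^{-1} f, A A^{-1} f)_{\H} = (A^{-1} f, f)_{\H} \leq \|A^{-1}\|_{\B(\H)} \|f\|_{\H}^2$, and likewise $Q_B[B^{-1} g] \leq \|B^{-1}\|_{\B(\H)} \|g\|_{\H}^2$. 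Substituting these two bounds yields
\begin{equation*}
  \left| (f, (A^{-1} - B^{-1}) g)_{\H} \right|
  \leq \sigma \sqrt{\|A^{-1}\|_{\B(\H)}} \sqrt{\|B^{-1}\|_{\B(\H)}} \; \|f\|_{\H} \|g\|_{\H}
  ,
\end{equation*}
and taking the supremum over $f, g$ in the unit ball of $\H$ identifies the left-hand side, via the self-adjointness (hence boundedness and the standard dual characterization of the operator norm) of the bounded self-adjoint operator $A^{-1} - B^{-1}$, with $\|A^{-1} - B^{-1}\|_{\B(\H)}$. This gives exactly the claimed inequality.

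I do not expect a genuine obstacle here; the statement is a clean abstract lemma. The only point requiring a little care is the justification of the two quadratic-form identities $(A^{-1}f, g) = Q_B(A^{-1}f, B^{-1}g)$ and $(f, B^{-1}g) = Q_A(A^{-1}f, B^{-1}g)$: these use that $A^{-1}f$ lies in $\Dom(A) \subset \Dom Q_A$ and $B^{-1}g \in \Dom(B) \subset \Dom Q_B$ together with the assumed equality $\Dom Q_A = \Dom Q_B$, so that all four forms are evaluated on legitimate arguments and the operator-form relation $Q_A(u, v) = (u, Av)$ for $v \in \Dom(A)$ applies. One should also note that the hypothesis $A^{-1}, B^{-1} \in \B(\H)$ with $A, B$ positive guarantees $0$ is in the resolvent set of both, so the square-root bounds $Q_A[A^{-1}f] = (f, A^{-1}f) \le \|A^{-1}\| \|f\|^2$ are valid. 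Everything else is the Cauchy--Schwarz-type estimate already packaged in~\eqref{estQ}. In the actual application later in the paper, the real work will be verifying an estimate of the form~\eqref{estQ} across the \emph{different} Hilbert spaces $\H_\eps$ and $\H_0$, which is why the authors flag that the proposition must be modified there; but in the abstract form stated, the argument above is complete.
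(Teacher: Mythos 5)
Your argument is correct and is essentially identical to the paper's own proof: both substitute $\phi=A^{-1}f$, $\psi=B^{-1}g$, use the representation theorem to rewrite $(f,(A^{-1}-B^{-1})g)$ as $Q_B(\phi,\psi)-Q_A(\phi,\psi)$, apply the hypothesis, and bound $Q_A[A^{-1}f]=(A^{-1}f,f)\leq\|A^{-1}\|\,\|f\|^2$ and its analogue for $B$. (A minor remark: the dual characterization $\|T\|=\sup|(f,Tg)|$ holds for any bounded operator, so self-adjointness of $A^{-1}-B^{-1}$ is not actually needed in the last step.)
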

\begin{proof}
Due to the assumption~\eqref{estQ} it holds that for all $f,g\in\H$
\begin{align}
\nonumber |\left(f,(A^{-1}-B^{-1})g\right)|  &= |\left(A\phi,(A^{-1}-B^{-1})B\psi\right)| =  \left|Q_B(\phi,\psi)-Q_A(\phi,\psi)\right|\leq \\
\label{trik}&\leq \sigma \sqrt{Q_A[\phi]}\sqrt{Q_B[\psi]} \leq \sigma \sqrt{\|A^{-1}\|_{\B(\H)}}\sqrt{\|B^{-1}\|_{\B(\H)}}\|f\| \|g\|
\end{align}
where the choice $f=A\phi$, $g=B\psi$ is possible for all $f,g\in\H$ due to boundedness of $A^{-1}$ and $B^{-1}$. This choice ensures that $\phi\in\Dom Q_A \cap \Dom A$ which due to the representation theorem (see \cite{kato}) yields $(A\phi,\psi) = Q_A(\phi,\psi)$ for all $\psi\in \Dom Q_A$. Similarly $\psi\in \Dom Q_B \cap \Dom B$ and $(\phi,B\psi) = Q_B(\phi,\psi)$ for all $\phi\in \Dom Q_B$. Inequality~\eqref{trik} yields directly the statement of the Proposition.
\end{proof}

\subsection{Proof of Lemma~\ref{lemmanormresh0}}\label{prooflemmanorm}
The quadratic form associated with the operator $H_0$ reads
\begin{align}
\label{defq0}Q_0[\psi]
=&\intom|\partial_s\psi|^2\,ds\,dt+\frac{1}{\eps^2}\intom|\nabla'\psi|^2\,ds\,dt
- \frac{E_1}{\eps^2}\intom |\psi|^2\,ds\,dt
\\
\nonumber & + C_\omega\intom \dot{\theta}^2 |\psi|^2 \,ds\,dt
- \frac{1}{4}\intom\kappa^2|\psi|^2\,ds\,dt
\end{align}
with
$$\Dom{Q_0} = W^{1,2}_0(\Omega_0).$$
Due to the equality $-\Delta_D^{\omega}\mathcal{J}_1 = E_1\mathcal{J}_1$, this form acts on the set $W^{1,2}_0(\Omega_0)\cap\H_0^1$ in the same way as $\Qeff$ given by~\eqref{defQeff} which we identify with the quadratic form $\qeff$ acting on $W^{1,2}_0(I)$. This yields
\begin{equation}\label{qnula}
\qeff(\phi_1,\psi_1) - Q_0(\phi_1\mathcal{J}_1,\psi_1\mathcal{J}_1) = 0
\,,
\end{equation}
where we use the notation from~\eqref{defdecomp}
and which leads us to slightly modified choice of functions $f,g$
comparing to the proof of Proposition~\ref{proptrik}.
We assign $H_0^{\lambda}:= H_0-\lambda$,
$\Heff^{\lambda}:= \Heff-\lambda$
and we choose $f := H_0^{\lambda}\phi$,
$P_1 g := (\Heff^{\lambda}\psi_1)\mathcal{J}_1$,
with $g^{\bot},\psi^{\bot}$ unspecified.
If we denote by $Q_0^{\lambda}$ and $\qeff^{\lambda}$,
the quadratic forms associated to $H_0^{\lambda}$ and $\Heff^{\lambda}$,
respectively,
we can rewrite the term analogous to the one estimated in~\eqref{trik} as
\begin{multline*}
\left(H_0^{\lambda}\phi, \left[\left(H_0^{\lambda}\right)^{-1}
-\left(\left(\Heff^{\lambda}\right)^{-1}\oplus 0^{\bot}\right)\right]
\left((\Heff^{\lambda}\psi_1)\mathcal{J}_1 + g^{\bot} \right)\right)
\\
=\qeff^{\lambda}(\phi_1,\psi_1)
+ \left(\phi^{\bot},(\Heff^{\lambda}\psi_1)\mathcal{J}_1\right)
+ \left(\phi,g^{\bot}\right)
- Q_0^{\lambda}(\phi_1\mathcal{J}_1,\psi_1\mathcal{J}_1)
- Q_0^{\lambda}\left(\phi^{\bot},\psi_1\mathcal{J}_1\right)
= \left(\phi,g^{\bot}\right).
\end{multline*}
Here all the terms except of $\left(\phi,g^{\bot}\right)$ vanish
due to~\eqref{qnula} or due to the orthogonality of
$\psi^{\bot},\phi^{\bot}$ and $\mathcal{J}_1$.
Hence we can estimate
\begin{align}
\nonumber\left|\left(f, \left[\left(H_0^{\lambda}\right)^{-1}
- \left(\left(\Heff^{\lambda}\right)^{-1}
\oplus 0^{\bot}\right)\right]g\right)\right|
=\left|\left(\phi,g^{\bot}\right)\right|
&= \left|\left((H_0^{\lambda})^{-1}f,(1-P_1)g\right)\right|
\nonumber
\\
& \leq \|f\|\|g\| \|(H_0^{\lambda})^{-1}(1-P_1)\|_{\B(\H_0)}
\nonumber
\\
&\leq \eps C_{\bot} \sqrt{\|(H_0^{\lambda})^{-1}\|}\|f\|\|g\|
\,,
\label{relnormres}
\end{align}
where the last estimate follows from the relation
$$\|\psi^{\bot}\|\leq \eps C_{\bot} \sqrt{Q_0^{\lambda}[\psi]}$$
that will be proved in Section~\ref{proofrel}. The proof is completed using the estimate
\begin{equation}\label{omezinvh0}
\|(H_0^{\lambda})^{-1}\|_{\B(\H_0)}\leq \frac{1}{|\lambda|-\frac{\Ckappa^2}{4}}
\end{equation}
analogous to~\eqref{omezinvheps},
\ie with $\tilde{\tilde{C}} := C_{\bot}\sqrt{\frac{1}{|\lambda|-\frac{\Ckappa^2}{4}}}$.

\subsection{Proof of Lemma~\ref{lemma1}}\label{prooflemma1}
In this proof the crucial and most tedious point is to check
that the assumption~\eqref{estQ} of the Proposition~\ref{proptrik} holds true.
Let $Q_{\eps}^{\lambda}[\psi] := \Qeps[\psi] - \lambda \|\psi\|_{\eps}$
be the quadratic form associated with the operator
$\Heps-\lambda =: \Heps^{\lambda}$.
Recall that $Q_0^{\lambda}$ was introduced in previous section.
If we assume that these forms (understood as sesquilinear forms) satisfy
\begin{equation}\label{estq2}
\left|\Qeps^{\lambda}(\phi,\psi) - Q_0^{\lambda}(\phi,\psi)\right|\leq \tilde{\sigma}(\eps) \sqrt{Q_0^{\lambda}[\phi]\Qeps^{\lambda}[\psi]}
\end{equation}
for all $\phi,\psi \in W_0^{1,2}(\Omega_0)$,
then we can derive the statement of Lemma~\ref{lemma1}
using similar ideas as in Proposition~\ref{proptrik}.
We only have to realize that the operators $\Heps$ and $H_0$
act on different Hilbert spaces,
in fact $\Heps$ is not self-adjoint on $\H_0$ where $H_0$ acts.
However, the Hilbert spaces $\H_{\eps}$ and $\H_0$
can be identified via the unitary transform~$\Ueps$
defined in~\eqref{defueps},
%\eg for scalar products on $\H_{\eps}$ and $\H_0$
%it holds $(\phi,\psi)_{\eps} = (\Ueps\phi,\Ueps\psi)$
which leads to the estimate
\begin{align*}
\left|\left(f, \left[U_{\eps}(H_{\eps}^{\lambda})^{-1}
\Ueps^{-1}-(H_0^{\lambda})^{-1}\right]g\right)\right|
\leq \left|Q_0^{\lambda}\left((H^{\lambda}_0)^{-1}f,(\Heps^{\lambda})^{-1}g\right)
- \Qeps^{\lambda}
\left((H_0^{\lambda})^{-1}f,(\Heps^{\lambda})^{-1}g\right)\right| \,&
\\
+ \left|\left(f,(U_{\eps}-1)(H_{\eps}^{\lambda})^{-1}\Ueps^{-1}g\right)\right| + \left|\left(f,(H_{\eps}^{\lambda})^{-1}(U_{\eps}^{-1}-1)g\right)\right| + \left|\left((H_0^{\lambda})^{-1}f,(\Ueps^2-1) g\right)\right|.&
\end{align*}
The operator $\Ueps$ differs from identity only by amount proportional to $\eps$, which we use in the estimate of last 3 terms. Together with \eqref{estq2} the final estimate reads
\begin{equation}\label{castlemma1}
\left|\left(f, \left[U_{\eps}(H_{\eps}^{\lambda})^{-1}\Ueps^{-1}-(H_0^{\lambda})^{-1}\right]g\right)\right|\leq \left(c\eps + 2\tilde{\sigma}(\eps) \sqrt{\|(H_0^{\lambda})^{-1}\|_{\B(\H_0)} \|(\Heps^{\lambda})^{-1}\|_{\B(\H_{\eps})}} \right) \|f\| \|g\|
\end{equation}
where $c = 12a \Ckappa \left(\|(\Heps^{\lambda})^{-1}\|_{\B(\H_{\eps})} + \|(H_0^{\lambda})^{-1}\|_{\B(\H_0)}\right)$ and where we can in addition estimate the norms $\|(\Heps^{\lambda})^{-1}\|_{\B(\H_{\eps})}$, $\|(H_0^{\lambda})^{-1}\|_{\B(\H_0)}$ by~\eqref{omezinvheps} and~\eqref{omezinvh0}.

It remains to prove~\eqref{estq2}, \ie\ to find $\tilde{\sigma}(\eps)$
and to prove that this quantity tends to zero
provided the assumptions of Theorem~\ref{Thm.main.intro} are satisfied.
This part of the proof is very technical and lengthy, on the other hand,
the reasons for our rather complicated assumptions will be explained.

\subsection{Proof of relation~\texorpdfstring{\eqref{estq2}}{estq2}}%
\label{proofrel}
Let $\phi,\psi \in W_0^{1,2}(\Omega_0)$.
We have to establish suitable estimates on the difference
of the following sesquilinear forms
\begin{align}
\label{qepssesq}Q_{\eps}^{\lambda}(\phi,\psi)=&\intom\frac{1}{hh_{\eps}}\der\bar{\phi}\der\psi \,ds\,dt+\\
\nonumber&+\frac{1}{\eps^2}\intom \frac{h}{\heps}\nabla'\bar{\phi}\cdot\nabla'\psi\,ds\,dt - \frac{E_1}{\eps^2}\intom \frac{h}{\heps}\bar{\phi}\psi\,ds\,dt
\\
\nonumber&+\,\frac{1}{2}\intom\frac{1}{\heps^2}(k_1\keps_1+k_2\keps_2)\bar{\phi}\psi\,ds\,dt - \frac{3}{4}\intom\frac{h}{\heps^3}\left((\keps_1)^2+(\keps_2)^2\right)\bar{\phi}\psi\,ds\,dt
\\
\nonumber&- \intom\frac{\der\heps}{2h\heps^2}\,(\bar{\phi}\der\psi+\der\bar{\phi}\psi)\,ds\,dt
\\
\nonumber&+\intom\frac{\left(\der\heps\right)^2}{4h\heps^3} \bar{\phi}\psi\,ds\,dt -\lambda\intom\frac{h}{\heps}\bar{\phi}\psi\,ds\,dt
\end{align}
and
\begin{align}
\label{q0sesq}Q_0^{\lambda}(\phi,\psi)=&\intom\partial_s\bar{\phi}\partial_s\psi\,ds\,dt + C_\omega\intom \dot{\theta}^2 \bar{\phi}\psi \,ds\,dt
\\
\nonumber&+\frac{1}{\eps^2}\intom\nabla'\bar{\phi}\cdot\nabla'\psi\,ds\,dt - \frac{E_1}{\eps^2}\intom \bar{\phi}\psi\,ds\,dt
\\
\nonumber&- \frac{1}{4}\intom(k_1^2+k_2^2)\bar{\phi}\psi\,ds\,dt  - \lambda \intom \bar{\phi}\psi\,ds\,dt
\,.
\end{align}
On the right hand side of the estimates
the term $\sqrt{Q_0^{\lambda}[\phi]\Qeps^{\lambda}[\psi]}$ should stand. However, due to Lemma~\ref{lemmafiner} and similar statement on $Q_0^\lambda[\phi]$ we get the inequalities
\begin{align}
Q_{\eps}^{\lambda}[\psi] &\geq \frac{1}{4}\|\der\psi\|^2 + \frac{1}{2}\left(-\lambda - 9 \Ckappa^2\right) \|\psi\|^2,\\
Q_0^\lambda[\phi] &\geq \|\partial_s\phi\|^2 + (-\lambda-\frac{\Ckappa^2}{4})\|\phi\|^2.
\end{align}
Recall that we assume $\lambda<-9\Ckappa^2$, hence in front of $\|\psi\|^2$, $\|\phi\|^2$ there stand positive numbers and we can come from estimates by $\sqrt{Q_0^{\lambda}[\phi]}$, $\sqrt{\Qeps^{\lambda}[\psi]}$ to estimates by norms $\|\psi\|^2$, $\|\phi\|^2$ or norms like $\|\der\psi\|$, $\|\partial_s\psi\|$.

Some of the estimates can be performed easily using just
the Schwarz inequality and straightforward estimates:
\begin{align}
\nonumber\intom\frac{\left(\der\heps\right)^2}{4h\heps^3}\bar{\phi}\psi\,ds\,dt &\leq C_1 \eps^2 \left(\|\dot{k}^{\eps}_1\|_{\infty} + \|\dot{k}^{\eps}_2\|_{\infty}\right)^2\sqrt{Q_0^{\lambda}[\phi]\Qeps^{\lambda}[\psi]},\\
\label{est1}\left|-\intom\frac{\der\heps}{2h\heps^2}\,(\bar{\phi}\der\psi+\der\bar{\phi}\psi)\,ds\,dt\right| &\leq C_2 \eps \left(\|\dot{k}^{\eps}_1\|_{\infty} + \|\dot{k}^{\eps}_2\|_{\infty}\right)\sqrt{Q_0^{\lambda}[\phi]\Qeps^{\lambda}[\psi]},\\
\nonumber\left|\lambda\intom\frac{h}{\heps}\bar{\phi}\psi\,ds\,dt - \lambda \intom\bar{\phi}\psi\,ds\,dt\right|&\leq C_3\eps \sqrt{Q_0^{\lambda}[\phi]\Qeps^{\lambda}[\psi]}
\end{align}
where the constants read
$C_1 := \frac{4a^2}{\Ckappa^2}$,
$C_2 := \frac{10 a}{\Ckappa}$
and $C_3 := \frac{12 a \lambda}{\Ckappa}$.
The first two inequalities are stated in terms of the quantities
$\|\dot{k}^{\eps}_\mu\|_{\infty}$,
which can be replaced by~$\|\dot{k}_\mu\|_{\infty}$
for sufficiently regular curves~$\Gamma$,
so that the results of previous papers are recovered.
On the other hand, for non-differentiable curvatures
the Steklov approximations (recall~\eqref{deraproximace})
yields
$$
\eps \left(\|\dot{k}^{\eps}_1\|_{\infty} + \|\dot{k}^{\eps}_2\|_{\infty}\right) \leq 4 \frac{\eps}{\delta(\eps)} \Ckappa
\,,
$$
where the right hand side tends to zero due to
the second assumption in~\eqref{delta.limits}.
Summing up, all the terms estimated in~\eqref{est1} tend to zero.

To estimate the rest of terms on the left hand side of~\eqref{estq2},
the Hilbert space decomposition~\eqref{hildecomp} has to be used.
The following computations
will also show why the assumption~\eqref{sigmakknule} is needed.

\subsubsection{Hilbert space decomposition
and estimates by \texorpdfstring{$\sigma_k(\delta(\eps))$}{sigmak(delta(eps))}}
Let us estimate the difference of terms on the third lines of~\eqref{qepssesq} and~\eqref{q0sesq}. It is easy to find
\begin{align}
\nonumber |q(\phi,\psi)|&:=\bigg|\frac{1}{2}\intom\frac{1}{\heps^2}(k_1\keps_1+k_2\keps_2)\bar{\phi}\psi\,ds\,dt - \frac{3}{4}\intom\frac{h}{\heps^3}\left((\keps_1)^2+(\keps_2)^2\right)\bar{\phi}\psi\,ds\,dt
\\
\nonumber& \qquad + \frac{1}{4}\intom (k_1^2+k_2^2)\bar{\phi}\psi\,ds\,dt\bigg|
\\
\label{odhq}&\leq 3\Ckappa\intom\left(|k_1-\keps_1|+|k_2-\keps_2|\right)|\phi|\,|\psi|\,ds\,dt + \eps a \Ckappa^3\|\phi\|\|\psi\|.
\end{align}
Then the first term on the last line
can be estimated by the Schwarz inequality to get a product of the term
$\left(\intom\left(|k_1-\keps_1|+|k_2-\keps_2|\right)|\phi|^2\,ds\,dt\right)^{1/2}$ and the analogous one with $\psi$ instead of $\phi$.
%$$3\Ckappa\intom\left(|k_1-\keps_1|+|k_2-\keps_2|\right)|\phi|\,|\psi|\,ds\,dt \leq 3\Ckappa \sqrt{\intom\left(|k_1-\keps_1|^2+|k_2-\keps_2|^2\right)|\phi|^2\,ds\,dt}\sqrt{\intom\left(|k_1-\keps_1|^2+|k_2-\keps_2|^2\right)|\psi|^2\,ds\,dt}.$$
However, to proceed further, we have to use the Hilbert space decomposition.
If we rewrite the function $\phi$ (and similarly $\psi$ in the other term)
as in formula~\eqref{defdecomp}, we get
\begin{multline*}
\intom\left(|k_1-\keps_1|+|k_2-\keps_2|\right)|\phi|^2\,ds\,dt
\\
= \intom\left(|k_1-\keps_1|+|k_2-\keps_2|\right)
\left(|\phi_1|^2\mathcal{J}_1^2 + 2\Re\,
\left(\bar{\phi}_1\mathcal{J}_1\phi^{\bot}\right)
+ |\phi^{\bot}|^2\right)\,ds\,dt
\end{multline*}
and we will be able to prove that the last integral tends to zero.
The term containing $|\phi_1|^2$ is
(after another estimate by the Schwarz inequality and recalling the normalization of $\mathcal{J}_1$) analogous to~\eqref{intf}
and tends to zero according to Proposition~\ref{theorconv}
and assumptions of Theorem~\ref{Thm.main.intro}.
The mixed terms vanish due to orthogonality of $\phi^{\bot}$
and $\mathcal{J}_1$ and thanks to the fact that neither
$k_\mu$ nor $k_\mu^{\eps}$ depend on the variable~$t$.
The remaining part with $|\phi^{\bot}|^2$
tends to zero according to the following ideas.

Using straightforward estimates it is possible to find
$$\Qeps^{\lambda}[\psi] \geq \frac{1}{\eps^2}\frac{1-4\eps a \Ckappa}{1+4\eps a \Ckappa}\intom |\nabla'\psi|^2\,ds\,dt - \frac{E_1}{\eps^2}\frac{1+4\eps a \Ckappa}{1-4\eps a \Ckappa}\intom|\psi|^2\,ds\,dt.$$
If we apply this inequality on $\psi^{\bot}$
and if we realize that
$\eps^{-2}\intom|\nabla'\psi^{\bot}|^2\,ds\,dt
\geq \eps^{-2} E_2 \|\psi^{\bot}\|^2$,
where $E_2$ is the second eigenvalue of the transverse Laplacian
$-\Delta_D^{\omega}$, we get
$$
\Qeps^{\lambda}[\psi^{\bot}] \geq \frac{1}{\eps^2}\left(E_2\frac{1-4\eps a \Ckappa}{1+4\eps a \Ckappa}(1-\beta) - E_1 \frac{1+4\eps a \Ckappa}{1-4\eps a \Ckappa}\right)\|\psi^{\bot}\|^2 + \frac{1}{\eps^2}\beta \frac{1-4\eps a \Ckappa}{1+4\eps a \Ckappa} \|\nabla'\psi^{\bot}\|^2.
$$
Here $\beta$ is a real parameter and from the relation $E_2>E_1$ it follows that $\beta$ can be chosen in such way that for small enough $\eps$ both coefficients in front of $\|\psi^{\bot}\|^2$ and $\|\nabla'\psi^{\bot}\|^2$ are positive. We define a constant $C_{\bot}$ such that the minimum of these two coefficient is equal to $\left(\frac{C_{\bot}}{2}\right)^{-2}$ which yields
\begin{equation}\label{bot1}
\|\psi^{\bot}\| \leq \eps \frac{C_{\bot}}{2} \sqrt{\Qeps^\lambda[\psi^{\bot}]}\leq \eps C_{\bot}\sqrt{\Qeps^\lambda[\psi]}, \qquad \|\nabla'\psi^{\bot}\|\leq \eps \frac{C_{\bot}}{2} \sqrt{\Qeps^\lambda[\psi^{\bot}]} \leq C_{\bot} \sqrt{\Qeps^\lambda[\psi]}.
\end{equation}
Using similar ideas, we would get also
\begin{equation}\label{bot2}
\|\phi^{\bot}\| \leq \eps C_{\bot} \sqrt{Q_0^\lambda[\phi]}, \qquad \|\nabla'\phi^{\bot}\|\leq \eps C_{\bot} \sqrt{Q_0^\lambda[\phi]}
\end{equation}
(for simplicity we put here the same constant $C_{\bot}$ even though the estimate could be somewhat finer).

Now we can finish the estimate on $|q(\phi,\psi)|$ as
\begin{equation}\label{estq}
|q(\phi,\psi)|
\leq \big(C_4 \sigma_k(\delta(\eps))
+ C_5 \eps\big)\sqrt{Q_0^{\lambda}[\phi]\Qeps^{\lambda}[\psi]}
\end{equation}
where $C_4$ and $C_5$ are constants depending on $\Ckappa$, $C_{\bot}$ and $a$.

The Hilbert space decomposition will be needed also
in the case of last two estimates which are technically most difficult.
The difference of terms on the second lines of~\eqref{qepssesq} and~\eqref{q0sesq} reads
$$m(\phi,\psi) := \frac{1}{\eps^2}\intom \left(\frac{h}{\heps}-1\right)\nabla'\bar{\phi}\cdot\nabla'\psi\,ds\,dt - \frac{E_1}{\eps^2}\intom \left(\frac{h}{\heps}-1\right)\bar{\phi}\psi\,ds\,dt.$$
Applying formula~\eqref{defdecomp} on $\phi$ and $\psi$,
we can divide $m(\phi,\psi)$ into four terms.
The term $m(\phi_1\mathcal{J}_1,\psi_1\mathcal{J}_1)$
is integrated by parts with respect to the transverse variable~$t$ twice;
as usual, we derive the following formula
for $\phi,\psi\in C_0^{\infty}(\Omega_0)$
and we can extend it to $W_0^{1,2}(\Omega_0)$ by density:
$$
  m(\phi_1\mathcal{J}_1,\psi_1\mathcal{J}_1)
  = \frac{1}{\eps^2}\intom \left(\frac{h}{\heps}-1\right)\bar{\phi_1}
  \psi_1\mathcal{J}_1\left[-\Delta'\mathcal{J}_1 - E_1\mathcal{J}_1\right]\,ds\,dt
 - \frac{1}{\eps^2}\intom\Delta\left(\frac{h}{\heps}-1\right)\bar{\phi_1}\psi_1\mathcal{J}_1^2\,ds\,dt
  \,.
$$
Here the first term vanishes, hence we can estimate
\begin{align*}
|m(\phi_1\mathcal{J}_1,\psi_1\mathcal{J}_1)| &\leq \frac{1}{\eps^2}\intom\left|\Delta\left(\frac{h}{\heps}-1\right)\right||\phi_1||\psi_1|\mathcal{J}_1^2\,ds\,dt
\\
&\leq 6\Ckappa\int_I\left(|k_1-\keps_1|+|k_2-\keps_2|\right)|\phi_1| |\psi_1| ds + 48\eps \Ckappa^3\int_I |\phi_1||\psi_1|ds
\\
&\leq 12\sqrt{3}\Ckappa \sigma_k(\delta(\eps))\|\phi_1\|_{W^{1,2}(I)}\|\psi_1\|_I + 48\eps \Ckappa^3 \|\phi_1\|_I\|\psi_1\|_I.
\end{align*}
Similarly the terms $m(\phi_1\mathcal{J}_1,\psi^{\bot})$ and $m(\phi^{\bot},\psi_1\mathcal{J}_1)$ are integrated by parts once to get
\begin{align}
|m(\phi_1\mathcal{J}_1,\psi^{\bot})|&\leq 8 \sqrt{3E_1}\left(\sigma_k(\delta(\eps))\|\phi_1\|_{W^{1,2}(I)} + \eps a \Ckappa \|\phi_1\|_I\right)\frac{\|\psi^{\bot}\|}{\eps},\\
|m(\psi_1\mathcal{J}_1,\phi^{\bot})| &\leq 8 \sqrt{3E_1} \left(\sigma_k(\delta(\eps))\|\psi_1\|_{W^{1,2}(I)} + \eps a \Ckappa \|\psi_1\|_I\right)\frac{\|\phi^{\bot}\|}{\eps}.
\end{align}
Finally, the estimate on $m(\phi^{\bot},\psi^{\bot})$ is straightforward,
$$|m(\phi^{\bot},\psi^{\bot})| \leq 16\eps a\Ckappa\left(\frac{\|\nabla'\phi^{\bot}\|}{\eps}\frac{\|\nabla'\psi^{\bot}\|}{\eps}+E_1\frac{\|\phi^{\bot}\|}{\eps}\frac{\|\psi^{\bot}\|}{\eps}\right)$$
where due to \eqref{bot1} and \eqref{bot2} the term in bracket is bounded.
Summing up, we get due to relations~\eqref{bot1} and~\eqref{bot2}
\begin{equation}\label{estm}
|m(\phi,\psi)|\leq \big(C_6 \sigma_k(\delta(\eps))
+ C_7 \eps\big)\sqrt{Q_0^{\lambda}[\phi]\Qeps^{\lambda}[\psi]}
\,,
\end{equation}
where the constants $C_6$, $C_7$ again depend on $\Ckappa$, $C_{\bot}$, $a$ and in addition on $E_1$.

Using the Hilbert space decomposition
we have shown that in consequence of assumption~\eqref{sigmakknule}
(which is automatically satisfied for $I$ bounded),
the terms $|q(\phi,\psi)|$ and $|m(\phi,\psi)|$
tend to zero in the limit $\eps\to 0$.
In the next section we show why also the assumption~\eqref{sigmathetaknule}
is needed.

\subsubsection{Estimates by
\texorpdfstring{$\sigma_{\dot{\theta}}(\tilde{\delta}(\eps))$}%
{sigma{\dot{theta}}(\tilde{delta}(eps))}}
Finally,
the terms on the first lines of~\eqref{qepssesq} and~\eqref{q0sesq} are estimated,
\ie\ we examine the sesquilinear form
\begin{equation}\label{defl}
l(\phi,\psi):= \intom\frac{1}{hh_{\eps}}\der\bar{\phi}\der\psi \,ds\,dt - \intom\partial_s\bar{\phi}\partial_s\psi\,ds\,dt - C_\omega\intom \dot{\theta}^2 \bar{\phi}\psi \,ds\,dt
\end{equation}
(recall $C_\omega = \|\ader\mathcal{J}_1\|^2_{L^2(\omega)}$).
We again have to decompose the functions $\psi$ and $\phi$ using~\eqref{defdecomp} which leads to long expressions where one part of the terms subtracts and other part of terms vanish when $\eps\to 0$ due to relations~\eqref{bot1} and~\eqref{bot2}.
However, also the following, problematic term occurs
\begin{equation}\label{problem}
\intom \dot{\theta}\ader\mathcal{J}_1
\left(\psi_1\partial_s\bar{\phi}^{\bot}
+\bar{\phi}_1\partial_s\psi^{\bot}\right) \,ds\,dt.
\end{equation}

This term is expected to vanish in the limit as $\eps \to 0$
due to appearance of $\phi^{\bot}$ and $\psi^{\bot}$.
However, we do not have any suitable estimate on the longitudinal derivatives
of the functions $\phi^{\bot}$ and $\psi^{\bot}$,
\ie~no vanishing control over
$\|\partial_s\bar{\phi}^{\bot}\|$ and $\|\partial_s\bar{\psi}^{\bot}\|$.
For this reason, we would like to perform an integration
by parts with respect to the variable~$s$,
which requires $\dot{\theta}$ to be differentiable.
To avoid this extra assumption,
we mollify also $\dot{\theta}$ using the Steklov approximation
$$
  \dot{\theta}^{\eps}(s):=\frac{1}{\tilde{\delta}(\eps)}\int_{s-\frac{\tilde{\delta}(\eps)}{2}}^{s+\frac{\tilde{\delta}(\eps)}{2}}\dot{\theta}(\xi)\,d\xi
  \,.
$$
Here again $\eps \mapsto \tilde{\delta}(\eps)$
is a continuous function which vanishes when $\eps\to 0$.
Then we rewrite the first term in~\eqref{problem}
in the following way and the integration by parts is justified:
\begin{align}
\nonumber&\intom \dot{\theta}\ader\mathcal{J}_1\psi_1\partial_s\bar{\phi}\,ds\,dt = \intom (\dot{\theta}-\dot{\theta}^{\eps})\ader\mathcal{J}_1\psi_1\partial_s\bar{\phi}^{\bot}\,ds\,dt + \intom \dot{\theta}^{\eps}\ader\mathcal{J}_1\psi_1\partial_s\bar{\phi}^{\bot}\,ds\,dt
\\
\label{radek1}&=\intom (\dot{\theta}-\dot{\theta}^{\eps})
\ader\mathcal{J}_1\psi_1\partial_s\bar{\phi}^{\bot}\,ds\,dt
+ \intom \big(\dot{\theta}^{\eps}\big)^{\!\!\mbox{\Large .}} \,
\ader\mathcal{J}_1\psi_1\bar{\phi}^{\bot}\,ds\,dt
+ \intom \dot{\theta}^{\eps}\ader\mathcal{J}_1(\partial_s\psi_1)
\bar{\phi}^{\bot}\,ds\,dt
\,,
\end{align}
and similarly for the second term in~\eqref{problem}.
In~\eqref{radek1} the first term will be estimated by $\sigma_{\dot{\theta}}(\tilde{\delta}(\eps))$ and tends to zero due to Proposition~\ref{theorconv}, the other terms tend to zero due to~\eqref{bot1},~\eqref{bot2} and the boundedness of $\dot{\theta}$. After tedious computations we get the final formula
\begin{equation}\label{estl}
|l(\phi,\psi)|\leq \left(C_8 \sigma_{\dot{\theta}}(\tilde{\delta}(\eps)) + C_9 \eps\right)\sqrt{Q_0^{\lambda}[\phi]\Qeps^{\lambda}[\psi]}
\end{equation}
where the constants $C_8$ and $C_9$ depend on the quantities $\Ckappa$, $C_{\bot}$, $a$, $E_1$ and also $C_\omega$, $\|\dot{\theta}\|_{\infty}$.

\subsection{Summary}
Putting all the estimates~\eqref{est1},~\eqref{estq},
\eqref{estm} and \eqref{estl} together,
we get
\begin{equation}\label{defsigmaeps}
\tilde{\sigma}(\eps):= \left(C_1 + C_2\right) \eps \left(\|\dot{k}^{\eps}_1\|_{\infty} + \|\dot{k}^{\eps}_2\|_{\infty}\right) + \left(C_3 + C_5 + C_7 + C_9\right)\eps + \left(C_4 + C_6\right)\sigma_k(\delta(\eps)) + C_8 \sigma_{\dot{\theta}}(\tilde{\delta}(\eps))
\end{equation}
which tends to zero in the limit $\eps\to 0$
due to assumptions of Theorem~\ref{Thm.main.intro}
and the proof of this theorem is in fact completed.

In more details, if we implement~\eqref{defsigmaeps} into~\eqref{castlemma1}
and if we find appropriate constant $\tilde{C}$ as the maximum of all
constants involved, we get the statement of Lemma~\ref{lemma1}.
In combination with the result of Lemma~\ref{lemmanormresh0}
and relation~\eqref{estlambda}
we can set $C:=C_{\lambda}\left(\tilde{C} + \tilde{\tilde{C}}\right)$
to get the statement of Theorem~\ref{Thm.main.intro}.
Let us note that the constant $C$
is thus function of $\lambda$, $\Ckappa$, $C_{\bot}$, $a$, $E_1$, $C_\omega$
and $\|\dot{\theta}\|_{\infty}$.

%-------------------%
\section{Conclusion}\label{seconcl}
%-------------------%
%
The objective of this paper was to establish the effective
Hamiltonian approximation~\eqref{limit} in the norm-resolvent sense
and under minimal regularity assumptions about the waveguide.
Our main result is summarized in Theorem~\ref{Thm.main.intro}.
Let us discuss its assumptions
and links with previous results here.

\subsection{Comparison with previous results}
%First of all, let us emphasize that Theorem~\ref{Thm.main.intro}
%implies all the known results mentioned in Section~\ref{Sec.why}.
As mentioned in Section~\ref{Sec.why},
the norm resolvent convergence of Theorem~\ref{Thm.main.intro}
was proved previously under sufficiently regular assumptions
about~$\Gamma$ and~$\theta$.
%and the convergence rate was given as well (see, \eg, \cite{FK4}).
Let us show that our conclusions correspond to these results.

In case when $k_1, k_2$ and $\dot{\theta}$
are Lipshitz continuous with Lipshitz constants
$L_{k_1}$, $L_{k_2}$ and $L_{\dot{\theta}}$, respectively,
then $\|\dot{k}^{\eps}_\mu\|_{\infty}\leq L_{k_\mu}$,
with $\mu = 1,2$, for any choice of $\delta(\eps)$.
Consequently, we can abandon the second condition in~\eqref{delta.limits}.
Indeed, as we explained in Section~\ref{Sec.strategy.intro},
this assumption was needed to ensure that the quantity
$\eps\|\dot{k}_{\mu}^\eps\|_{\infty}$ tends to zero
if $\|\dot{k}_{\mu}^\eps\|_{\infty}$ is not bounded,
on the other hand, for Lipshitz continuous $k_{\mu}$ this is not the case
(\cf~also the remarks below~\eqref{est1}).
Hence we can simply choose $\delta(\eps) =\tilde{\delta}(\eps)= \eps$, then $\sigma_k(\delta(\eps))\propto \left(L_{k_1} + L_{k_2}\right)\eps$ and $\sigma_{\dot{\theta}}(\tilde{\delta}(\eps))\propto L_{\dot{\theta}}\eps$,
and similarly as in Theorem~\ref{Thm.main.intro} we find
$$
  \left\|
  U(-\Delta_D^{\Omega_\varepsilon} -\varepsilon^{-2} E_1-i)^{-1}U^{-1}
  - (H_\mathrm{eff}-i)^{-1} \oplus 0^\bot
  \right\|_{\B(\H_0)}
  \leq C \eps \left( 1 + L_{k_1} + L_{k_2} + L_{\dot{\theta}}\right).
$$
In this way we get the $\eps$-type decay rate well known
from the other papers (see, \eg, \cite{FK4}).

Of course, in case of $k_1, k_2$ and $\dot{\theta}$ differentiable
with bounded derivative, $L_{k_1},L_{k_2}$ and $L_{\dot{\theta}}$
can be replaced by $\|\dot{k}_1\|_{\infty},\|\dot{k}_2\|_{\infty}$
and $\|\ddot{\theta}\|_{\infty}$, respectively.

On the other hand, Theorem~\ref{Thm.main.intro}
covers much wider class of curves than previous papers.
It is reasonable to expect a worse decay rate
if the functions $k_1, k_2$ and $\dot{\theta}$ are not differentiable.
Then a bound on the decay rate can be obtained by optimizing
the choice of~$\delta(\eps)$, as the following example shows.
\begin{example}
Let
\begin{align*}
k_1(s) &= \begin{cases}
1, &s\in (2n, 2n+1) \,, \\
-1, &s\in (2n+1,2n+2) \,,
\end{cases}
&\qquad n\in \Z \,,\\
k_2(s)&=0\,,&\forall s\in\R \,.
\end{align*}
The corresponding curve is lying in a plane and is formed by arcs of circle
with radius $1$ whose center is in one half-plane for $s\in (2n, 2n+1)$
and in the other half-plane for $s\in (2n+1,2n+2)$
(\cf~the left hand side of Figure~\ref{nefrenet} for a part of this infinite curve).
Then the best possible estimate reads
$$
  \|\dot{k}^{\eps}_1\|_{\infty} \leq \frac{2 \|k_1\|_{\infty}}{\delta(\eps)}
  = \frac{2}{\delta(\eps)}
  \,,
$$
hence on the right hand side of~\eqref{NR.bound}
the term proportional to $\eps\delta(\eps)^{-1}$ occurs.
At the same time, for this choice of $k_1,k_2$
it holds $\sigma_k(\delta(\eps))\propto \sqrt{\delta(\eps)}$.
For simplicity we will look for the optimal function $\delta(\eps)$
in the class of polynomials, here the most convenient choice is
$\delta(\eps) = \eps^{2/3}$ since then the term on the right hand side
of~\eqref{NR.bound} is proportional to $\eps^{1/3}$
(for suitable $\dot{\theta}$).
\end{example}

\subsection{Optimality of our assumptions}
Let us now turn to the optimality of the conditions under that our Theorem~\ref{Thm.main.intro} holds.

Condition~(i) of Assumption~\ref{assumgamma}
requires $\Gamma \in W^{2,\infty}_{\mathrm{loc}}(I;\R^3)$,
which seems to be the minimal condition to guarantee
that a (weakly) differentiable moving frame,
necessary for the definition of a simultaneously
twisted and bent tube along the curve, exists.
At the same time, the boundedness of curvature~$\kappa$ is
necessary to consider the waveguide even as an abstract Riemannian manifold
(\cf~Section~\ref{sectube}).
We therefore consider these hypotheses as the natural ones.
\footnote{
The case of `broken-line' waveguide or, more generally,
the question of shrinking tubular neghbourhoods of graphs
do not fit in the present setting.
We refer to recent works of Grieser \cite{Grieser-2008,Grieser}
for results and references in this field.
}
The same concerns the injectivity assumption~(iii) of Assumption~\ref{assumgamma}
if we want to interpret the waveguide as a genuine physical device
embedded~$\R^3$. But our results hold without
this last assumption, as pointed out
in Remarks~\ref{Rem.self} and~\ref{Rem.self.bis}.

On the other hand, the global boundedness of~$\dot\theta$
from Assumption~\ref{assumgamma}.(ii)
is not necessary for the definition of a non-self-intersecting waveguide.
In Figure~\ref{Fig.theta} we present an example of
an infinite twisted waveguide with elliptical cross-section
such that $\dot{\theta}(s)$ tends to infinity as $|s|\to \infty$.
It can be introduced and handled by the methods of Section~\ref{Sec.pre}
without problems. However, the form domain of the transformed
Laplacian~$\tilde{H}_\eps$ will not coincide
with the Sobolev space $W_0^{1,2}(\Omega_0)$
(\ie~\eqref{form.domain} will not hold)
and an extensive modification of the present strategy
to get the operator limit~\eqref{limit} would be required.

\begin{figure}[h!]
\begin{center}
\includegraphics[width=0.6\textwidth]{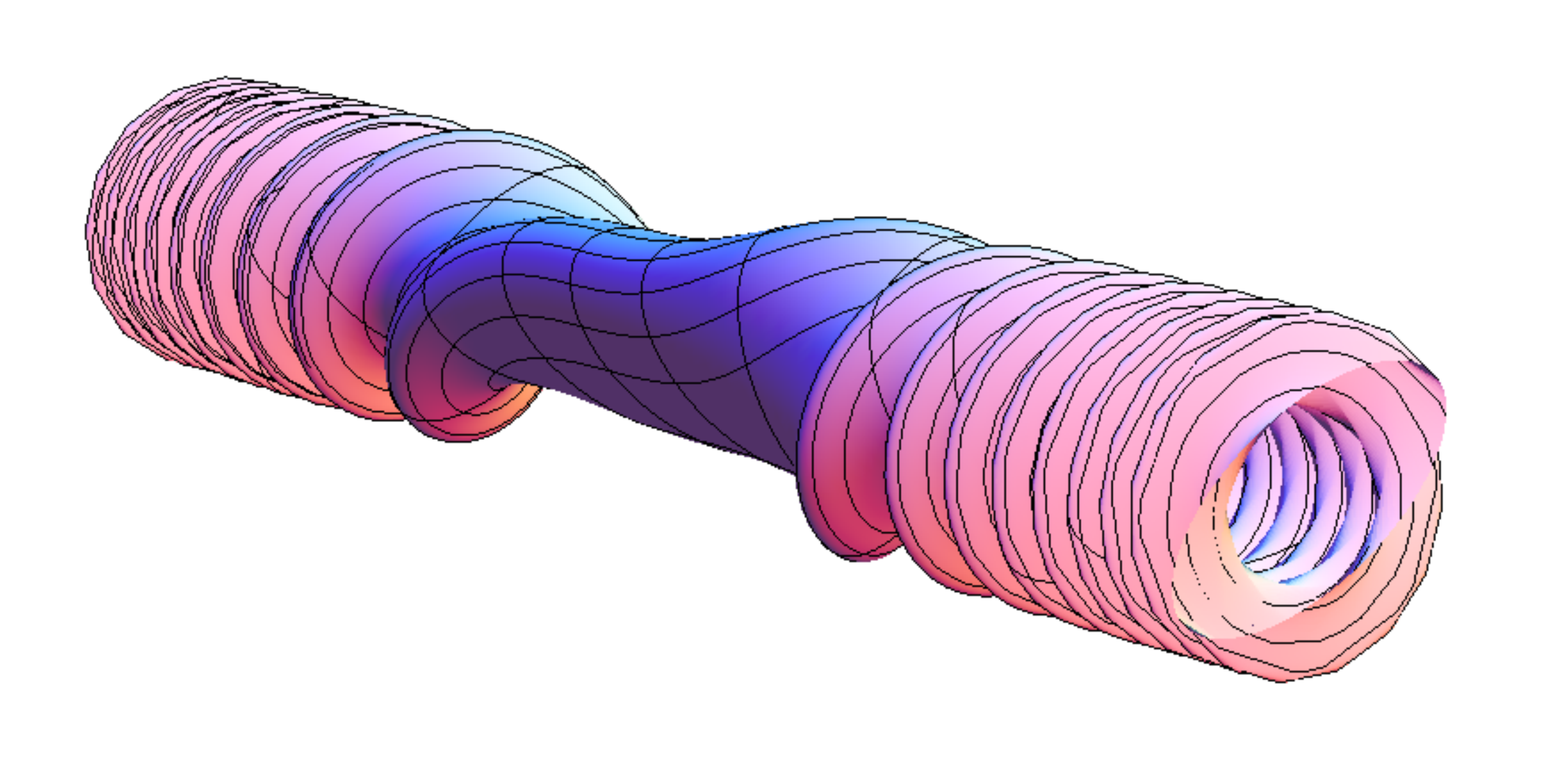}%
\caption{A waveguide of elliptical cross-section
along a straight line whose twisting diverges at infinity.
The embedded cylindrical channel is responsible for the existence
of essential spectrum.}\label{Fig.theta}
\end{center}
\end{figure}
It is also important to emphasize that the unboundedness of~$\dot{\theta}$
may lead to pathological spectral properties.
Indeed, the condition $\dot{\theta}(s)\to\infty$ as $|s|\to \infty$
implies that~$H_\mathrm{eff}$ has purely discrete spectrum,
while $\sigma_\mathrm{ess}(-\Delta_D^{\Omega_\eps})\not=\varnothing$
in general.
Actually, the latter happens whenever the cross-section~$\omega$
contains the origin of~$\R^2$ (as in Figure~\ref{Fig.theta}),
so that there is an infinite cylindrical channel in~$\Omega_\eps$
leading to scattering waves.
It does not contradict the validity of the effective
Hamiltonian approximation in principle, since the threshold
of the essential spectrum of $-\Delta_D^{\Omega_\eps}-\eps^{-2}E_1$
tends to infinity in the limit as $\eps \to 0$,
but the usefulness of the approximation becomes doubtful.

We admit that Assumption~\ref{ass2} seems unnatural
and it is true that it comes from our technical procedure
of mollifying the curvature functions~$k_1,k_2$ and~$\dot{\theta}$.
Although it covers a wide class of waveguides
and, in particular, all the previously known results,
there are still reasonable situations for which
Assumption~\ref{ass2} does not hold,
as the following counterexample shows.
\begin{example}\label{excounter}
Let us define the curve $\Gamma^{\mathrm{osc}}:\R\to\R^3$
by giving its curvatures:
\begin{align*}
k_1^{\mathrm{osc}}(s) &:= \begin{cases} \;1,
 &s\in\Big((n-1+\frac{2k}{2n})\pi,(n-1+\frac{2k+1}{2n})\pi\Big),\\
                     -1,
 &s\in\Big((n-1+\frac{2k+1}{2n})\pi,(n-1+\frac{2k+2}{2n})\pi\Big),
                     \end{cases} \quad n\in \N,\;k = 0,1...,n-1,\\
k_2^{\mathrm{osc}}(s) &:= 0.
\end{align*}

\begin{figure}[h]
\begin{center}
\includegraphics[width=15 cm]{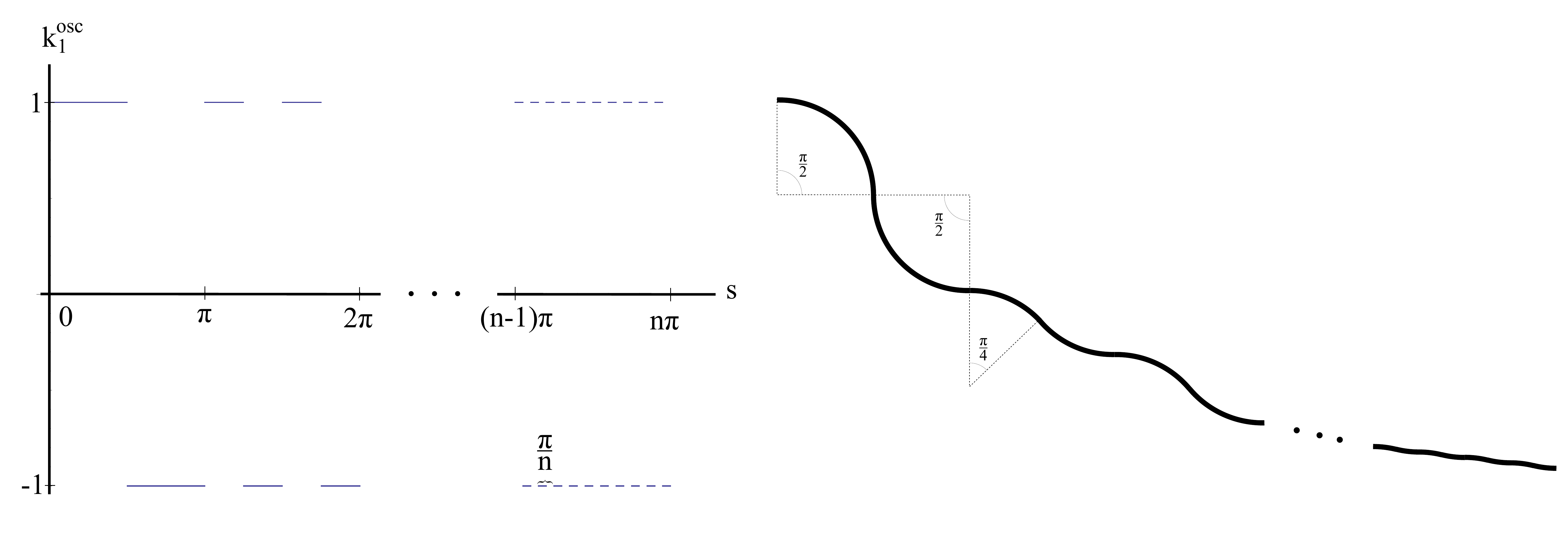}
\caption{The counterexample curve.}\label{counter}
\end{center}
\end{figure}

The graph of the function $k_1^{\mathrm{osc}}$
is given in Figure~\ref{counter}
as well as $\Gamma^{\mathrm{osc}}$ itself.
This curve lies in a plane and consists of arcs of circle
of radius $1$ which are shorter and shorter as $s$ grows.
For $s\to \infty$, this curve looks like a straight line,
however, the curvature is still nonzero.
It is possible to show that for all $\eps>0$
there exists $n_0\in \R$ such that
$$
  \sup_{|\eta|\leq \frac{\delta(\eps)}{2}}
  \int_{(n-1)\pi}^{n\pi}
  \left|k_1^{\mathrm{osc}}(s)-k_1^{\mathrm{osc}}(s+\eta)\right|^2 ds
  = 4\pi
  \,,\qquad \forall n\in\N,\;n\geq n_0 ,
$$
and this holds true for any choice of function $\delta$.
Hence the curvature $k_1^{\mathrm{osc}}$ does not satisfy
Assumption~\ref{ass2} and our Theorem~\ref{Thm.main.intro}
does not apply.

To get at least some information about the dynamics
in such a pathological quantum waveguide,
we examine the spectrum of the Dirichlet Laplacian
$-\Delta_D^{\Omega_\eps^{\mathrm{osc}}}$
in a untwisted tube with rectangular cross-section
constructed along the curve~$\Gamma^{\mathrm{osc}}$.
It is possible to show that for any positive~$\eps$
(such that the tube does not overlap itself)
$$
  \inf{
  \sigma_{\mathrm{ess}}\left(
  -\Delta_D^{\Omega_\eps^{\mathrm{osc}}}
  -\frac{E_1}{\eps^2}
  \right)
  }
  \geq 0
  \,.
$$
On the other hand, assuming that the effective dynamics
is governed by~\eqref{defheff}, we have
$$
  \sigma(\Heff) = \sigma_\mathrm{ess}(\Heff)
  = \sigma\Big(-\Delta_D^\R-\frac{1}{4}\Big)
  = \Big[-\frac{1}{4},\infty\Big)
  .
$$
That it, $[-\frac{1}{4},0]$ belongs to
the essential spectrum of~$\Heff$,
while the threshold of the essential spectrum
of the three-dimensional renormalized Hamiltonian is non-negative
for every positive~$\eps$.

Again, this pathological spectral behavior does not
necessarily imply that the norm-resolvent convergence of
$-\Delta_D^{\Omega_\eps^{\mathrm{osc}}} - \eps^{-2}E_1$
to~$\Heff$ does not hold.
As a matter of fact, it is possible to show that
$-\Delta_D^{\Omega_\eps^{\mathrm{osc}}} - \eps^{-2}E_1$
possesses an infinite number of negative eigenvalues,
hence it may happen that these eigenvalues cover
the whole interval $[-\frac{1}{4}, 0]$
in the limit as $\eps\to 0$.
\end{example}

In any case, we would like to emphasize that our Theorem~\ref{Thm.main.intro}
represents the first norm-resolvent convergence result
for unbounded waveguides in the full setting of bending and twisting .
The question of optimality of Assumption~\ref{ass2}
in the unbounded case remains open.

\subsection{Two-dimensional waveguides}
The methods of the present paper also enable one to improve
the known results \cite{DE,FK4} about the effective Hamiltonian
approximation in strip-like neighbourhoods of plane curves.
The norm-resolvent convergence in the two-dimensional case
does not follow directly from our three-dimensional Theorem~\ref{Thm.main.intro},
but it can be established exactly in the same way.
The proof is in fact much simpler because there is
just one curvature function, the Frenet frame always exists
(it coincides with a relatively parallel frame)
and there is no twisting
(codimension of the reference curve is one).
Here we therefore present just the ultimate result without proof.
The interested reader who is not willing to adapt the present proof
to the two-dimensional case himself/herself
is referred to~\cite{Helenka-diplomka}.

Let $\Gamma:I\to\R^2$ be a unit-speed $C^1$-smooth curve,
where~$I$ is an arbitrary open interval
(finite, semi-infinite, infinite).
The vector fields $T:=\dot{\Gamma}$ and $N:=(-\dot\Gamma^2,\Gamma^1)$
form a positively oriented Frenet frame of~$\Gamma$.
We introduce the curvature function~$\kappa$ by
the Serret-Frenet formula $\dot{T}=-\kappa N$.
Note that, contrary to the three-dimensional case,
$\kappa$~is allowed to change sign
(and the value of the sign depends on the parametrization).

\begin{figure}[h!]
\begin{center}
\includegraphics[width=0.6\textwidth]{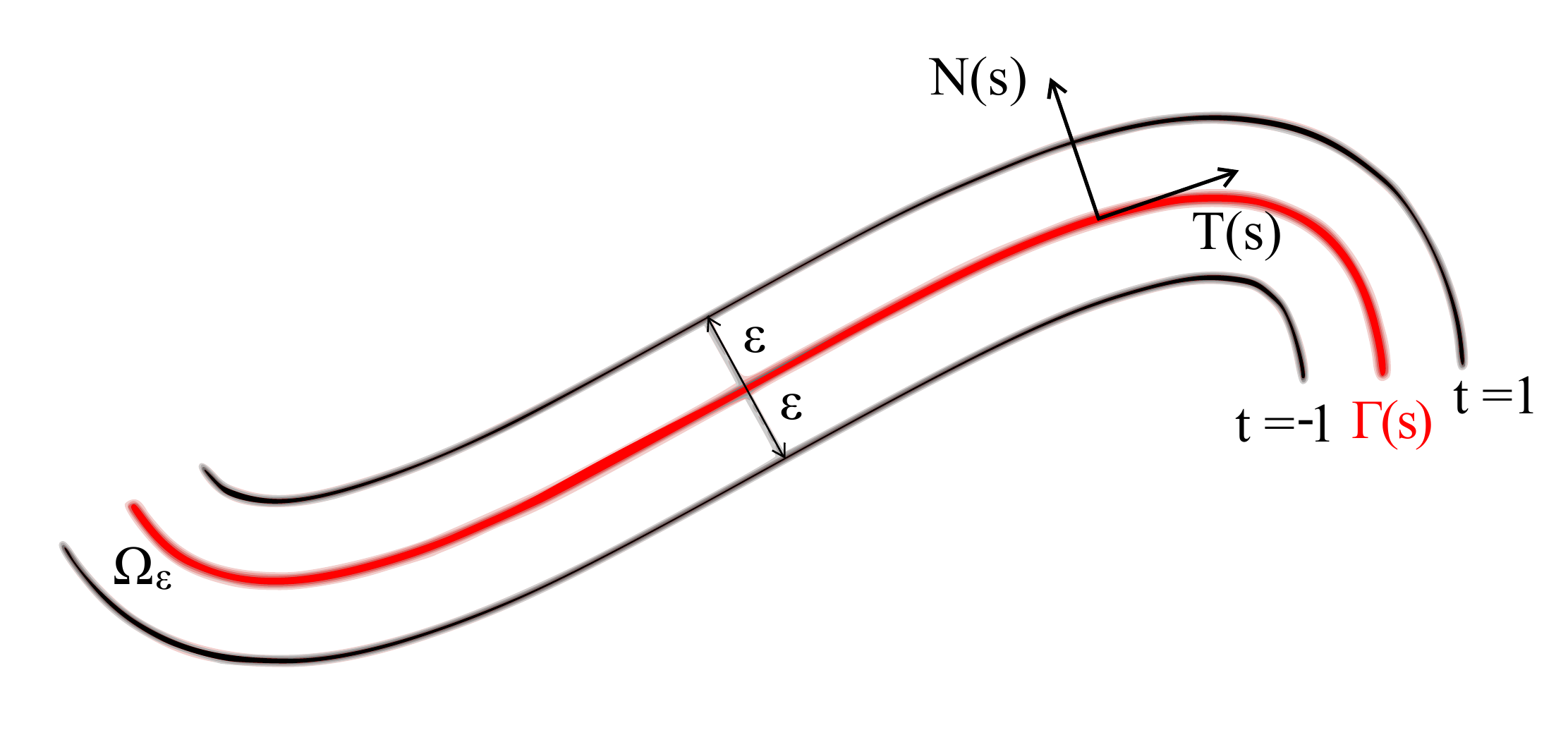}
\caption{A two-dimensional quantum waveguide.}\label{fig2D}
\end{center}
\end{figure}

In analogy with~\eqref{tube.image} and~\eqref{zavedeniL},
the two-dimensional waveguide~$\Omega_\eps$ is introduced
as the image~\eqref{tube.image} where the mapping~$\mathcal{L}$
is given now by
$$
  \mathcal{L}(s,t) := \Gamma(s) + \eps \, t \, N(s)
  \,,
$$
with $(s,t) \in I \times (-1,1)$ (see Figure~\ref{fig2D}).
The unitary transforms~\eqref{U1} and~\eqref{U2}
should be replaced by
\begin{align*}
  U_1 &:
  L^2(\Omega_\varepsilon)
  \to L^2\big(
  \Omega_0,\varepsilon\;\!h(s,t)\;\!ds\;\!dt
  \big):
  \{\psi\mapsto\psi\circ\mathcal{L}^{-1}\}
  \,,
  \\
  U_2 &:
  L^2\big(
  \Omega_0,\varepsilon^2\;\!h(s,t)\;\!ds\;\!dt
  \big)
  \to L^2(\Omega_0) :
  \{\psi \mapsto \sqrt{\eps h} \;\! \psi\}
  \,,
\end{align*}
where $h(s,t) := 1-\eps\kappa(s) t$,
and we again define $U:=U_2 U_1$.
The latter enables one to approximate in the limit as $\eps \to 0$
the Dirichlet Laplacian in~$\Omega_\eps$
by the well known one-dimensional effective Hamiltonian
$$
  \Heff :=  - \Delta_D^{I}  - \frac{\kappa^2}{4}
  \,.
$$
The main idea of the proof again consists in replacing~$U_2$
by~\eqref{U2.tilde} using the mollification~$\kappa^{\eps}$
defined analogously to~\eqref{aproximace}.

The two-dimensional version of Theorem~\ref{Thm.main.intro}
reads as follows.
\begin{theorem}\label{Thm.2D}
Let the following assumptions hold true:
\begin{enumerate}
\item[\emph{(i)}]
$\Gamma\in W^{2,\infty}_{\mathrm{loc}}(I;\R^2)$
\ and \
$\kappa\in L^\infty(I)$.
\item[\emph{(ii)}]
$\Omega_{\eps}$ does not overlap itself
(\ie~$\mathcal{L}$ is injective)
for small enough~$\eps$.
\item[\emph{(iii)}]
$
\displaystyle
\lim_{\eps\to 0} \sigma_\kappa(\delta(\eps)) = 0
$
for some positive continuous function $\delta$ satisfying~\eqref{deltaknule},
where~$\sigma_\kappa$ is defined by~\eqref{sigmaf}.
\end{enumerate}
Then there exist positive constants~$\eps_0$ and~$C$
such that for all $\eps \leq \eps_0$,
\begin{equation*}%\label{NR.bound}
  \left\|
  U(-\Delta_D^{\Omega_\varepsilon} -\varepsilon^{-2} E_1-i)^{-1}U^{-1}
  - (H_\mathrm{eff}-i)^{-1} \oplus 0^\bot
  \right\|_{\mathcal{B}(L^2(\Omega_0))}
  \\
  \leq C \, \Big(
  \eps + \eps \;\! \|\dot{\kappa}^\eps\| + \sigma_\kappa(\delta(\eps))
  \Big)
  \,,
\end{equation*}
where~$0^\bot$ denotes the zero operator on the orthogonal
complement of the span of
$
  \{\varphi\otimes\mathcal{J}_1 \,|\, \varphi \in L^2(I)\}
$ and $U=U_2U_1$.
\end{theorem}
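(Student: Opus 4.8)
The plan is to mirror the proof of Theorem~\ref{Thm.main.intro} in the much simpler two-dimensional geometry, so no essentially new idea beyond those of Sections~\ref{Sec.Steklov}--\ref{Sec.proof} is needed. First I would set up the curvilinear coordinates. Since the codimension is one, the Frenet frame $\{T,N\}$ exists for every $\Gamma\in W^{2,\infty}_{\mathrm{loc}}(I;\R^2)$, so neither the RPAF construction of Section~\ref{secframe} nor an angle function $\theta$ is needed. The metric induced by $\mathcal{L}(s,t)=\Gamma(s)+\eps t N(s)$ on $\Omega_0=I\times(-1,1)$ is diagonal, $G=\mathrm{diag}(h^2,\eps^2)$ with $h(s,t)=1-\eps\kappa(s)t$, hence $|G|=\eps^2h^2$, and positivity of $h$ (the non-degeneracy underlying (ii)) follows for small $\eps$ from $\kappa\in L^\infty(I)$. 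The transform $U_1$ identifies $L^2(\Omega_\eps)$ with $L^2(\Omega_0,\eps h\,ds\,dt)$. Exactly as in Section~\ref{secsmooth}, I mollify the single curvature to $\kappa^\eps$ via the Steklov approximation~\eqref{aproximace}, put $h_\eps:=1-\eps\kappa^\eps t$ and $|\tilde G|:=\eps^2 h_\eps^2$, and compose $\tilde U_2:\psi\mapsto|\tilde G|^{1/4}\psi$ with $U_\eps:\psi\mapsto(|G|/|\tilde G|)^{1/4}\psi$ to land on the fixed space $\H_0=L^2(\Omega_0)$. Because $h_\eps$ is differentiable, the resulting self-adjoint operator $H_\eps$ has form domain $W^{1,2}_0(\Omega_0)$, and its form $Q_\eps$ is the obvious 2D analogue of~\eqref{qeps}: a longitudinal term $(hh_\eps)^{-1}|\partial_s\psi|^2$, a transverse term $\eps^{-2}(h/h_\eps)(|\partial_t\psi|^2-E_1|\psi|^2)$, bounded curvature potentials built from $\kappa\kappa^\eps$ and $(\kappa^\eps)^2$, and terms carrying a factor $\partial_s h_\eps=-\eps\dot\kappa^\eps t=O(\eps\|\dot\kappa^\eps\|)$.

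Next I would copy the proof of Lemma~\ref{lemmafiner}: the transverse Poincar\'e inequality $\int_{-1}^{1}|\partial_t\phi|^2\,dt\ge E_1\int_{-1}^{1}|\phi|^2\,dt$ for $\phi\in W^{1,2}_0((-1,1))$, Fubini, and a Young estimate on the cross term give $Q_\eps[\psi]\ge\tfrac12\int(hh_\eps)^{-1}|\partial_s\psi|^2-c\,\Ckappa^2\|\psi\|^2_\eps$ for small $\eps$, so $H_\eps$ is bounded below uniformly in $\eps$ and $(H_\eps-\lambda)^{-1}$ is uniformly bounded on $\H_\eps$ for $\lambda<-c\,\Ckappa^2$; the same holds for $H_{\mathrm{eff}}=-\Delta_D^I-\kappa^2/4$ on $L^2(I)$.

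Then I would run the Friedlander--Solomyak argument of Proposition~\ref{proptrik}, in the modified form of Section~\ref{prooflemma1} that accounts for the distinct Hilbert spaces connected by $U_\eps=1+O(\eps)$, to compare $H_\eps$ with the decoupled operator $H_0:=1\otimes(-\eps^{-2}\Delta_D^\omega-\eps^{-2}E_1)+(-\Delta_D^I-\kappa^2/4)\otimes 1$ on $\H_0$. The heart is the quadratic-form bound $|Q_\eps^\lambda(\phi,\psi)-Q_0^\lambda(\phi,\psi)|\le\tilde\sigma(\eps)\sqrt{Q_0^\lambda[\phi]\,Q_\eps^\lambda[\psi]}$ with $\tilde\sigma(\eps)=O(\eps+\eps\|\dot\kappa^\eps\|+\sigma_\kappa(\delta(\eps)))$. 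The terms with a prefactor $\partial_s h_\eps$ or $h/h_\eps-1$ are $O(\eps)$ or $O(\eps\|\dot\kappa^\eps\|)$ by the Schwarz inequality, just as in~\eqref{est1}; here $\eps\|\dot\kappa^\eps\|\le 2\eps\,\Ckappa/\delta(\eps)\to 0$ by the second condition in~\eqref{deltaknule}. The delicate terms are those comparing $\kappa^2$ with $\kappa\kappa^\eps$ and $(\kappa^\eps)^2$, and the transverse mismatch $\eps^{-2}(h/h_\eps-1)(\cdot)$; for these one uses the decomposition $\phi=\phi_1\mathcal{J}_1+\phi^\bot$, $\psi=\psi_1\mathcal{J}_1+\psi^\bot$ as in~\eqref{defdecomp}. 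The diagonal contribution reduces, via $\int_\omega\mathcal{J}_1^2\,dt=1$, Lemma~\ref{lemmarozklad} and Proposition~\ref{theorconv}, to a multiple of $\sigma_\kappa(\delta(\eps))\|\phi_1\|_{W^{1,2}(I)}\|\psi_1\|_I$; the cross terms vanish because $\kappa,\kappa^\eps$ are $t$-independent and $\phi^\bot\perp\mathcal{J}_1$; the $\phi^\bot,\psi^\bot$ parts are controlled by the spectral-gap estimates $\|\psi^\bot\|\le\eps C_\bot\sqrt{Q_\eps^\lambda[\psi]}$ and $\|\phi^\bot\|\le\eps C_\bot\sqrt{Q_0^\lambda[\phi]}$ (and the analogues for $\nabla'$), obtained from $E_2>E_1$ exactly as in~\eqref{bot1}--\eqref{bot2}. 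Crucially, with no twisting the angular derivative $\ader\mathcal{J}_1$ never enters $Q_\eps$, so the problematic term~\eqref{problem} --- the one forcing the mollification of $\dot\theta$ and the hypothesis~\eqref{sigmathetaknule} in three dimensions --- simply does not occur; this is why assumption~(iii) needs only a hypothesis on $\kappa$. A verbatim copy of Lemma~\ref{lemmanormresh0}, again using $\|\psi^\bot\|\le\eps C_\bot\sqrt{Q_0^\lambda[\psi]}$, gives $\|(H_0-\lambda)^{-1}-(H_{\mathrm{eff}}-\lambda)^{-1}\oplus 0^\bot\|=O(\eps)$. Finally, combining through the resolvent identity~\eqref{estlambda} to pass from a real $\lambda<-c\,\Ckappa^2$ back to $-i$ yields the asserted bound with right-hand side $O(\eps+\eps\|\dot\kappa^\eps\|+\sigma_\kappa(\delta(\eps)))$.

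The main obstacle is, as in three dimensions but lighter, the uniform quadratic-form estimate: one must track the Hilbert-space decomposition carefully enough that every term either cancels exactly, is $O(\eps)$ or $O(\eps\|\dot\kappa^\eps\|)$, or is bounded by $\sigma_\kappa(\delta(\eps))$, with the last possibility arising only for the longitudinal terms, where Lemma~\ref{lemmarozklad} and Proposition~\ref{theorconv} apply. The absence of the twist term removes the single hardest estimate of the three-dimensional proof, which is precisely why Theorem~\ref{Thm.2D} can be stated with the cleaner hypothesis~(iii).
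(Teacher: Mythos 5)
Your proposal is correct and follows essentially the same route the paper has in mind: the paper deliberately omits the proof of Theorem~\ref{Thm.2D}, stating that it is obtained by adapting the three-dimensional argument verbatim (and referring to~\cite{Helenka-diplomka}), and your adaptation — diagonal metric $G=\mathrm{diag}(h^2,\eps^2)$, Steklov mollification of the single curvature, the lower bound of Lemma~\ref{lemmafiner}, the Friedlander--Solomyak form comparison with the decomposition~\eqref{defdecomp}, and the observation that the absence of the twist term~\eqref{problem} is exactly what removes the hypothesis~\eqref{sigmathetaknule} — is the intended one. No gaps.
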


\subsection{Different boundary conditions}
It is well known that the structure of the effective Hamiltonian
in the limit~\eqref{limit} is a consequence
of the choice of Dirichlet boundary conditions
on the `lateral boundary' $I\times\partial\omega$.
Indeed, there is no geometric potential for Neumann
boundary conditions \cite{Schatzman_1996}
and the limit can have a completely
different nature if one considers a combination
of Dirichlet and Neuman boundary conditions \cite{K5}.
On the other hand, our choice of Dirichlet boundary
conditions on $(\partial I)\times\omega$ is not essential.
In the same manner, we could impose any other type
of boundary conditions (Neumann, Robin, periodic, \etc),
which would lead to an analogue of Theorem~\ref{Thm.main.intro},
with the boundary conditions of~$H_\mathrm{eff}$ changed accordingly.
In particular, the choice of periodic boundary conditions
enable us to cover the case of tubes about closed (compact) curves.

\subsection{Non-thin waveguides}
Finally, let us mention that the tricks of the present paper
how to deal with quantum waveguides under mild regularity assumptions
do not restrict to the effective Hamiltonian approximation.
For instance, the usage of the relatively parallel frame
instead of the Frenet frame enables one to extend
some spectral results for $-\Delta_D^{\Omega_\eps}$
(with~$\eps$ not necessarily small)
to the general case of waveguides along merely twice differentiable
curves with possibly vanishing curvature.
In particular, we have in mind the classical results
about the curvature-induced bound states \cite{DE,ChDFK}
and the recent ones about Hardy-type inequalities
due to twisting \cite{EKK,K6-with-erratum,KZ1}.
\subsection*{Acknowledgement}
We are grateful to Denis Borisov for valuable discussions,
in particular for his idea how to significantly relax
our initial hypotheses about the waveguide regularity.
This work has been partially supported by
the Czech Ministry of Education, Youth, and Sports
within the project LC06002 and by the GACR grant No.~P203/11/0701.

%\newpage
%\bibliographystyle{plain}
%\bibliography{vyzkumak}
%\bibliographystyle{amsplain}
%\bibliography{bib}
\addcontentsline{toc}{section}{References}

\providecommand{\bysame}{\leavevmode\hbox to3em{\hrulefill}\thinspace}
\providecommand{\MR}{\relax\ifhmode\unskip\space\fi MR }
% \MRhref is called by the amsart/book/proc definition of \MR.
\providecommand{\MRhref}[2]{%
  \href{http://www.ams.org/mathscinet-getitem?mr=#1}{#2}
}
\providecommand{\href}[2]{#2}

\end{document}